\theoremstyle{plain}
\newtheorem{theorem}{Theorem}
\newtheorem{corollary}[theorem]{Corollary}
\newtheorem{lemma}[theorem]{Lemma}
\theoremstyle{definition}
\newtheorem{definition}[theorem]{Definition}
\newcommand\xqed[1]{%
  \leavevmode\unskip\penalty9999 \hbox{}\nobreak\hfill
  \quad\hbox{#1}}
\newcommand\demo{\xqed{$\Diamond$}}
\newtheorem{example}[theorem]{Example}
\theoremstyle{remark}
\numberwithin{theorem}{chapter}        % Numbers theorems "x.y" where x
\begin{document}

\bibliographystyle{plain}       % Set the bibliography style to AMS
                                % alphabetized. (Can use ``amsalpha'' or
                                % ``abbrv''instead.)

% Title page as required by Rackham dissertation guidelines

\titlepage{Classifying Topoi and Preservation of Higher Order Logic by Geometric Morphisms}{Shawn J. Henry}{Doctor of Philosophy}
{Mathematics} {2013}
{ Professor Andreas R. Blass, Chair \\
  Professor Mel Hochster\\
  Assistant Professor Scott Schneider\\
  Professor G. Peter Scott \\
  Associate Professor James P. Tappenden }

% Begin the front matter as required by Rackham dissertation guidelines

\initializefrontsections

%\unnumberedpage
\copyrightpage{Shawn J. Henry}
% Copyright page

\setcounter{page}{1}

%\dedicationpage{Put a dedication here}
% Dedication page

\startacknowledgementspage
% Acknowledgements page

{I am most indebted to my advisor, Andreas Blass, for his patience and for sharing a small part of his immense knowledge with me.  I would also like to thank Fran\c{c}ois Dorais for encouraging me not to give up when I realized that topos theory is hard.}

% Table of contents, list of figures, etc.
\tableofcontents
%\listoffigures

\startthechapters 
% The individual files for each of the chapters are put here.
% Save each chapter of your thesis to a seperate tex file
% and then use the \input command to include this file in your
% thesis.  For instance you can save a file to "intro.tex" and 
% then type \input{intro}". 

\chapter{Introduction}
\label{chap1}
Elementary topoi are categories that share many of the properties of the familiar category of sets and functions.  As such, they can serve as universes for the interpretation of higher order logic in much the same way that classical models of higher order theories are constructed from sets, functions, and relations.  Morphisms of topoi come in two flavors: logical functors which preserve all of the topos structure in sight, and therefore preserve all interpretations of higher order logic, and geometric morphisms, which abstract the morphisms of topoi of sheaves on topological spaces induced by continuous functions.  In general, geometric morphisms need not preserve power objects, and therefore need not preserve interpretations of all higher order theories; some theories, however, are nonetheless preserved.  Among the known examples are models of finitary algebraic theories, K-finite objects, well-founded relations, inductive constructions, and models of existential fixed point theories.   
	
One technique which has been particularly useful for demonstrating the preservation of certain higher order theories, those which admit certain related internal first order geometric theories of “bad sets” which intuitively prevent a structure from satisfying the theory in question, makes use of classifying topoi.  Roughly speaking, a classifying topos for an internal geometric theory $\mathbb{T}$ in a topos $\mathcal{E}$ is a topos $\mathcal{E}[\mathbb{T}]$ defined over $\mathcal{E}$ with a universal model of $\mathbb{T}$ such that every model of $\mathbb{T}$ in every topos $\mathcal{F}$ defined over $\mathcal{E}$ is the pullback of the universal model along an essentially unique geometric morphism from $\mathcal{F}$ to $\mathcal{E}[\mathbb{T}]$.  If the classifying topos $\mathcal{E}[\mathbb{T}]$ is degenerate, $\mathbb{T}$ has models in a topos $\mathcal{F}$ defined over $\mathcal{E}$ if and only if $\mathcal{F}$ is degenerate, and one can often show that the classifying topos for the theory of bad sets for a higher order theory $\mathcal{T}$ is degenerate if and only if $\mathcal{T}$ is true in every structure in question.
	
Having developed the general theory of degeneracy of classifying topoi, we apply it to higher order theories which admit a natural notion of bad set.  An object $A$ in a topos is said to be Dedekind finite if every monomorphism $m:A\rightarrow A$ is an isomorphism.  A bad set for $A$ is the graph of a monomorphism which misses a point. In general, if $A$ is Dedekind finite, its pullback along a geometric morphism need not be; however, if we impose the condition that the classifying topos for bad sets is degenerate, then the pullback of $A$ along any geometric morphism will also be Dedekind finite.  The theory of such objects is the internalization in the higher order logic of topoi of the external notion of geometric finiteness introduced by Freyd in \cite{Freyd06}.   
	
A  non-example  arises in the theory of field objects in topoi.  The degeneracy of the classifying topos for bad sets for a certain version of the theory of fields (which is classically, but not intuitionistically equivalent to the usual one) is equivalent to to satisfying the theory, but this equivalence does not imply that its models are preserved by geometric morphisms.\bigskip  

The following is a summary of the contents of each of the chapters.\bigskip

\textbf{II.)  Internal Geometric Theories and Classifying Topoi}	

Notation is fixed and the notions under consideration are defined.  The construction of classifying topoi for internal geometric theories (indeed, internal geometric theories themselves) appears to be largely folklore.  Most of the treatments of classifying topoi give the construction explicitly for geometric theories in Grothendieck topoi, then remark that, with some care, the same construction can be carried out for internal geometric theories in any elementary topos.  The usual construction of the classifying topos makes use of the full syntax of geometric logic to define the syntactic site of a theory, and a major difficulty arises from the fact that it is highly non-trivial to formalize the full deduction calculus for geometric logic in an arbitrary elementary topos.  In this chapter we present a syntax-free construction of the classifying topos for geometric propositional and full geometric theories, following \cite{BlaSc83}, where a similar construction is given for universal Horn theories.\bigskip

\textbf{III.)  Infinitary Deductions and Inductive Constructions}
  
In this chapter we introduce a limited deduction calculus for geometric logic, called the Grothendieck deduction calculus, which is modeled on the closure axioms for Grothendieck topologies. We introduce inductive constructions as a means of internally formalizing proofs in the Grothendieck deduction calculus.  We show that the Grothendieck deduction calculus proves a contradiction from a geometric theory $\mathbb{T}$ if and only if the classifying topos of $\mathbb{T}$ is degenerate.  Thus the Grothendieck deduction calculus is necessary and sufficient for determining when the classifying topos of a geometric theory is degenerate.  Inductive constructions are proof-theoretic by nature.  We address the question of whether every inductive construction arises in proving that the classifying topos of a geometric theory is degenerate.  We show that this is true for a limited class of inductive constructions, namely the ones which we will call “downward stratified”.\bigskip

\textbf{IV.)  Splitting Epimorphisms}

In this chapter we use classifying topoi to show that for every epimorphism $e:A\rightarrow B$ in a Boolean topos $\mathcal{E}$, there is a geometric surjection $f:\mathcal{B}\rightarrow \mathcal{E}$ from a Boolean topos $\mathcal{B}$ such that (internally) $f^{*}(e)$ has a splitting in $\mathcal{B}$.  We remark that the same construction splits epimorphisms over an arbitrary topos $\mathcal{E}$, but the geometric morphism $f$ is not a surjection in general.  We will make several uses of this result in the following chapter.\bigskip

\textbf{V.)  Geometric Dedekind Finiteness}

In this chapter we present an example of the technique developed in the second chapter.  We discuss Dedekind finite objects in topoi, which we show are not preserved by geometric morphisms.  We introduce Geometrically Dedekind Finite (GDF) objects as those objects for which the classifying topos for bad sets for Dedekind finiteness is degenerate.  We study the properties of GDF objects and give several examples.\bigskip  

\textbf{VI.)  Field Objects}

In this chapter we present a non-example of the technique developed in the second chapter.  We discuss field objects in topoi, and we show that one of the notions of field object is not preserved by geometric morphisms, but, nonetheless, is equivalent to the degeneracy of the classifying topos of its theory of bad sets.  We resolve the apparent paradox. 
\chapter{Internal Geometric Theories and Classifying Topoi}
\label{chap2} 
\section{Topoi and Geometric Morphisms}
\label{Topoi and Geometric Morphisms}

In this section we will define the basic notions under consideration, namely elementary topoi and geometric morphisms, and fix notation.  

\begin{definition}
\cite{Johns77}  An \textit{elementary topos} is a category $\mathcal{E}$ such that:

1.)  $\mathcal{E}$ has all finite limits, that is:

\begin{itemize}

\item[] i.)  $\mathcal{E}$ has a terminal object $1$ 
 
\item[] ii.)  $\mathcal{E}$ has a pullback for every diagram $X\rightarrow A \leftarrow Y$ 

\end{itemize}

2.)  $\mathcal{E}$ is cartesian closed, that is, for any pair of objects $A,B$ there is an object $B^{A}$ and for any object $X$ an isomorphism Hom$_{\mathcal{E}}(X\times A,B)\cong$ Hom$_{\mathcal{E}}(X,B^{A})$ natural in $X$.

3.)  $\mathcal{E}$ has a subobject classifier, that is, an object $\Omega$ and a morphism $t:1\rightarrow\Omega$ such that for any monomorphism $m:B\rightarrow A$ there is a unique morphism $\chi_{m}:A\rightarrow\Omega$ such that the diagram:

\[
\xymatrix{B  \ar[d]^-{m}  \ar[r]  &  1 \ar[d]^-{t}\\
A  \ar[r]^-{\chi_{m}}  &  \Omega} 
\]

is a pullback. \demo

\end{definition}

\notation

Topoi will usually be denoted by script letters $\mathcal{E},\mathcal{F},\mathcal{G}$, etc. and their objects by capital italic letters $A,B,C$, etc.  We shall sometimes refer to the object $\Omega^{A}$, called the \textit{power object} of $A$, as $P(A)$ for convenience. 

We will also require that our topoi have a natural numbers object:

\begin{definition}
 
A \textit{natural numbers object} in a topos $\mathcal{E}$ is an object $\mathbb{N}$ with morphisms $0:1\rightarrow \mathbb{N}$ and $s:\mathbb{N}\rightarrow \mathbb{N}$ such that for any object $A$ and morphisms $a:1\rightarrow A$ and $m:A\rightarrow A$ there is a unique morphism $f:\mathbb{N}\rightarrow A$ such that $f\circ 0=a$ and $f\circ s=m\circ f$. \demo

\end{definition}

The general properties of topoi with a natural numbers object can be found in \cite{Johns77}.  In particular topoi also have all finite colimits.  

\begin{definition}
 
A topos $\mathcal{B}$ is called \textit{Boolean} if every monomorphism $m:B\rightarrow A$ has a complement, that is, there is a monomorphism $\neg m:\neg B\rightarrow A$ such that $B\cup\neg B\cong A$ and $B\cap\neg B\cong 0$, where $\cup$ and $\cap$ mean union and intersection of subobjects, respectively.  \demo 

\end{definition}

As we mentioned in the introduction, there are two kinds of morphisms between topoi.  We shall be concerned with the following:

\begin{definition}

A \textit{geometric morphism} $f:\mathcal{F}\rightarrow\mathcal{E}$ between topoi $\mathcal{F}$ and $\mathcal{E}$ consists of a pair of adjoint functors $f_{*}:\mathcal{F}\rightarrow\mathcal{E}$ and $f^{*}:\mathcal{E}\rightarrow\mathcal{F}$ such that the left adjoint $f^{*}$ is left exact, that is, preserves finite limits. \demo

\end{definition}

For a more detailed exposition, see \cite{Johns77}.  Topoi are in many ways similar to the category of sets and functions.  In particular, they possess an internal logic which allows us to interpret higher order logic and argue much the same way as in ordinary mathematics based on sets, provided that we don't make use of the law of the excluded middle (unless we are working in a Boolean topos), the full axiom of replacement, or the axiom of choice.  For this reason we will sometimes refer to objects in a topos as sets.  A detailed exposition of the internal logic can be found in \cite[Ch.4]{Bell88}.  Our arguments, unless otherwise specified, take place in (can be formalized in) the Kripke-Joyal semantics as formulated there, without explicit mention of generalized elements or the forcing relation.  When our arguments involve transfer along the inverse image part of a geometric morphism $f:\mathcal{F}\rightarrow\mathcal{E}$ and $a\in A$ refers to a generalized element $a:X\rightarrow A$ of an object $A$ of $\mathcal{E}$, $f^{*}(a)$ refers to the generalized element $f^{*}(a):f^{*}(X)\rightarrow f^{*}(A)$ of $f^{*}(A)$ in $\mathcal{F}$.  

\section{Higher Order Logic}
\label{The Internal Logic}

In this section we describe higher order logic interpretable in the internal logic of topoi and give examples that will be used throughout the thesis.  

\begin{definition}

A \textit{signature} $\Sigma$ for higher order logic consists of:

1.)  A set $\Sigma$-Sort of \textit{sorts}

2.)  A set $\Sigma$-Fun of \textit{function symbols} 

3.)  A set $\Sigma$-Rel of \textit{relation symbols} \demo

\end{definition}

\begin{definition}
 
The set $\Sigma$-Type of \textit{types} over $\Sigma$ is defined recursively as follows:

1.)  \textit{Basic Types}: Each sort is a type

2.)  \textit{Product Types}: There is a distiguished type $1$, and if $A$ and $B$ are types, so is $A\times B$

3.)  \textit{Function Types}:  If $A$ and $B$ are types, so is $[A\rightarrow B]$

4.)  \textit{Power Types}:  If $A$ is a type, so is $P(A)$

5.)  \textit{List Types}:  If $A$ is a type, so is $L(A)$\demo 

To each function symbol $F$ we assign a pair $(A,B)$ of types and write $F:A\rightarrow B$ to indicate that $F$ has the pair of types $(A,B)$, and to each relation symbol $R$ we assign a type $A$ and write $R\subseteq A$ to indicate that $R$ has type $A$.

\end{definition}

Each type comes with a stock of variables, and terms, formulas, and sentences are defined recursively from these.  We write $a\epsilon A$ to indicate that $a$ is a variable of type $A$.  A \textit{sequent} is a sentence of the form $\forall x_{0}...\forall x_{k}(\varphi\rightarrow\psi)$ where $\varphi$ and $\psi$ are formulas with at most the variables $x_{0},...,x_{k}$ free.  For details see \cite[D4]{Johns02}.  Topoi have exactly the structure necessary to interpret higher order logic.  

\begin{definition}
 
Let $\Sigma$ be a higher order signature.  A $\Sigma$-\textit{structure} $\mathcal{M}$ in a topos $\mathcal{E}$ is given by specifying for each type $A$ an object $A^{\mathcal{M}}$ of $\mathcal{E}$ subject to the requirements that:

1.)  $1^{\mathcal{M}}$ is the terminal object of $\mathcal{E}$ and $(A\times B)^{\mathcal{M}}=A^{\mathcal{M}}\times B^{\mathcal{M}}$ where the latter is the product in $\mathcal{E}$

2.)  $[A\rightarrow B]^{\mathcal{M}}$ is the exponential ${B^{\mathcal{M}}}^{A^{\mathcal{M}}}$

3.)  $P(A)^{\mathcal{M}}=P(A^{\mathcal{M}})$ where the latter is the power object in $\mathcal{E}$

4.)  $L(A)^{\mathcal{M}}=(A^{\mathcal{M}})^{[n]}$ where the latter is the list object as defined below.

And we specify:

1.)  For each function symbol $F$ of type $(A,B)$, a morphism $F^{\mathcal{M}}:A^{\mathcal{M}}\rightarrow B^{\mathcal{M}}$

2.)  For each relation symbol $R$ of type $A$, a subobject $R^{\mathcal{M}}\subseteq A^{\mathcal{M}}$ \demo  

\end{definition}

The interpretation of types and symbols can be extended to an interpretation of terms and formulas over $\Sigma$ in the context of a suitable set of variables.  See \cite[D4]{Johns02} for details.  In particular, if $\varphi$ and $\psi$ are formulas with only variables among $x_{0},...,x_{k}$ of types $A_{0},...,A_{k}$ free, then $\varphi^{\mathcal{M}}$ and $\psi^{\mathcal{M}}$ are interpreted as subobjects of $A_{0}\times...\times A_{k}$ in the context $x_{0},...,x_{k}$ and we say the sequent $\forall x_{0}...\forall x_{k}(\varphi\rightarrow\psi)$ is true in $\mathcal{M}$ if $\varphi^{\mathcal{M}}\subseteq\psi^{\mathcal{M}}$.  See \cite[D4]{Johns02} for details.  

If $f:\mathcal{F}\rightarrow\mathcal{E}$ is a geometric morphism, pulling the interpretations of the sorts and symbols back along $f^{*}$ gives us a $\Sigma$-structure $f^{*}(\mathcal{M})$ in $\mathcal{F}$.  We say a formula $\varphi$ is (strongly) preserved by $f$ if for every $\Sigma$-structure $\mathcal{M}$ in $\mathcal{E}$ we have $f^{*}(\varphi^{\mathcal{M}})=\varphi^{f^{*}(\mathcal{M})}$.  We say that a sequent $\forall x_{0}...\forall x_{k}(\varphi\rightarrow\psi)$ is (weakly) preserved by $f$ if whenever it is true in a $\Sigma$-structure $\mathcal{M}$ in $\mathcal{E}$ it is also true when interpreted in $f^{*}(\mathcal{M})$.

We're now ready to give some examples of constructions defined in the higher order logic of topoi.  We will present the constructions informally, but one could, if one wished, write down a signature and formulas that describe them.

\begin{example}

Let $A$ be an object in a topos $\mathcal{E}$ and let $\varphi(X)$ be the formula $\forall a\epsilon A,\forall p\epsilon P(A)((\emptyset\in X)\wedge(p\in X\rightarrow p\cup\{a\}\in X)$ and define $K(A)=\{p\in P(A)|\forall X\in PP(A)(\varphi(X)\rightarrow p\in X))\}$, that is, $K(A)$ consists of all those subobjects of $A$ which belong to every family that contains the empty set and is closed under adjoining singletons.  A subobject $p$ of $A$ that belongs to $K(A)$ is called a \textit{K-finite} subobject of $A$.  $A$ itself is called K-finite if $A\in K(A)$.  K-finiteness is preserved by geometric morphisms.  In fact, for any object $A$ in a topos $\mathcal{E}$ and geometric morphism $f:\mathcal{F}\rightarrow\mathcal{E}$ we have $f^{*}(K(A))\cong K(f^{*}(A))$.  For this and other properties of K-finite objects, see \cite[D5.4]{Johns77}.  

\end{example}

\begin{example}

Let $\mathcal{E}$ be a topos with natural numbers object $\mathbb{N}$.  In the slice topos $\mathcal{E}/\mathbb{N}$ we have the generic natural number $n$ given by the diagonal $\delta:\mathbb{N}\rightarrow \mathbb{N}\times\mathbb{N}$.  Like any natural number, it has a cardinal $[n]=\{k\in\mathbb{N}|k<n\}$.  If $A$ is any object of $\mathcal{E}$ we can form the exponential $(\mathbb{N}^{*}A)^{[n]}$ in $\mathcal{E}/\mathbb{N}$.  We denote its domain in $\mathcal{E}$ by $A^{[n]}$.  The object $A^{[n]}$ is called the \textit{list object} over $A$.  Intuitively, $A^{[n]}$ is the object of finite tuples of elements of $A$, in the sense that, for any natural number $p:1\rightarrow \mathbb{N}$, the object $A^{[p]}$ of $p$-tuples of elements of $A$ is, up to isomorphism, the pullback of $A^{[n]}\rightarrow \mathbb{N}$ along $p$.  This construction is preserved by inverse images, since slicing commutes with inverse images and $\mathbb{N}$ and exponentiation by finite cardinals are preserved by them. For more details see \cite[A2.5.15]{Johns02}.\bigskip

\end{example}

\begin{example}

Let $\Sigma$ be a signature with a single type $A$ and a single relation symbol of arity $(A,A)$.  A $\Sigma$-structure is called an \textit{internal partial order} if it satisfies: 

1.)  $\forall a\epsilon A(a\leq a)$

2.)  $\forall a, b\epsilon A(a\leq b\wedge b\leq a\rightarrow a=b)$

3.)  $\forall a,b,c\epsilon A(a\leq b\wedge b\leq c\rightarrow a\leq c)$

If, in addition, $\Sigma$ has binary relation symbols and $\vee$ and $\wedge$ of input arity $(A,A)$ and output arity $A$, a $\Sigma$-structure is called a \textit{distributive lattice} if it is an internal partial order and it satisfies:

1.)  $\forall a,b,c\epsilon A(a\leq a\vee b$ and $b\leq a\vee b$ and if $a\leq c$ and $b\leq c$ then $a\vee b\leq c)$

2.)  $\forall a,b,c\epsilon A(a\wedge b\leq a$ and $a\wedge b\leq b$ and if $c\leq a$ and $c\leq b$ then $c\leq a\wedge b)$

3.)  $\forall a,b,c\epsilon A(a\vee(b\wedge c)=(a\vee b)\wedge (a\vee c)$ and $a\wedge (b\vee c)=(a\wedge b)\vee (a\wedge c))$

If, in addition, $\Sigma$ has 0-ary function symbols $0$ and $1$ of type $A$, a $\Sigma$-structure is called a \textit{bounded distributive lattice} if it is a distributive lattice and it satisfies:

1.)  $\forall a\epsilon A(a\leq 1)$

2.)  $\forall a\epsilon A(0\leq a)$

If, in addition, $\Sigma$ has a binary function symbol $\Rightarrow$ of input arity $(A,A)$ and output arity $A$, a $\Sigma$-structure is called an \textit{internal Heyting algebra} if it is a bounded distributive lattice and it satisfies:

1.)  $\forall a,b,c\epsilon A(c\wedge a\leq b$ iff $c\leq a\Rightarrow b)$

If, in addition, $\Sigma$ has a unary function symbol $\bigvee$ of input type $P(A)$ and output type $A$, a $\Sigma$-structure is called a \textit{internal complete Heyting algebra} if it is a Heyting algebra and it satisfies:

1.)  $\forall b\epsilon A\forall B\epsilon P(A)(b\in B\rightarrow b\leq\bigvee B)$

2.)  $\forall a\epsilon A\forall B\epsilon P(A)(\forall b\epsilon A(b\in B\rightarrow b\leq a)\rightarrow\bigvee B\leq a)$

3.)  $\forall a\epsilon A\forall B\epsilon P(A)(a\wedge\bigvee B=\bigvee\{a\wedge b|b\in B\})$

The subobject classifier $\Omega$ of a topos $\mathcal{E}$ always has the structure of an internal complete Heyting algebra.  In particular, the subobject classifier has internal K-finite meets, and arbitrary joins, and these commute, making $\Omega$ into an internal \textit{frame}.  See \cite[A5.11]{Johns02} for a proof.

\end{example}

\section{Internal Propositional Theories}
\label{Internal Propositional Theories}

In this section we will define a fragment of the internal logic of topoi which is analogous to propositional logic in classical mathematics.  We will primarily be making use of geometric propositional theories rather than full geometric theories (to be defined in the next section), so we explicitly describe them here.

\begin{definition}
 
Let $\mathcal{E}$ be a topos.  An \textit{internal geometric propositional language} $\mathcal{L}$ in $\mathcal{E}$ consists of an object $A$ of $\mathcal{E}$ thought of as the ``set of sentence symbols" and the object $Seq_{A}=K(A)\times P(K(A))$ thought of as the ``set of implications between finite conjunctions and arbitrary disjunctions of finite conjunctions of sentence symbols".  An \textit{internal geometric propositional theory} $\mathbb{T}$ over $\mathcal{L}$ is a subobject of $K(A)\times P(K(A))$.  We will often drop ``geometric'', as we shall not be considering propositional theories that are not geometric.  \demo

\end{definition}

\notation 

An element $(p,\alpha)\in Seq_{A}$ is to be thought of as the sequent
\[
\xymatrix{a_{0}\wedge...\wedge a_{n}\rightarrow \bigvee_{i\in I}a_{0}^{i}\wedge...\wedge a_{n_{i}}^{i} }
\]
where $p=\{a_{0},...,a_{n}\}$ and $\alpha = \{p_{i}|i\in I\}$ and $p_{i}=\{a_{0}^{i},...,a_{n_{i}}^{i}\}$.  We shall use the suggestive notation $p\rightarrow \bigvee_{q\in\alpha}q$ for the sequent $(p,\alpha)$ and we shall write $\top$ in place of $\emptyset\in K(A)$ and $\bot$ in place of $\emptyset\in P(K(A))$. \demo

If $\mathbb{T}$ is an internal propostional theory in a topos $\mathcal{E}$ and $f:\mathcal{F}\rightarrow\mathcal{E}$ is a geometric morphism, there is a natural translation $f^{*}(\mathbb{T})$ of $\mathbb{T}$ in $\mathcal{F}$.  We have $f^{*}(K(A))\cong K(f^{*}(A))$ and there is a canonical morphism $f^{*}(P(K(A)))\rightarrow P(K(f^{*}(A)))$ given by the transpose of the characteristic map of the monomorphism $f^{*}(\varepsilon_{K(A)})\rightarrow f^{*}(P(K(A)))\times K(f^{*}(A))$ where $\varepsilon_{K(A)}$ is the subobject of $P(K(A))\times K(A)$ classified by the evaluation map.  If $T$ denotes the underlying object of $\mathbb{T}$ we define $f^{*}(\mathbb{T})$ to be the image of $f^{*}(T)$ under the map $f^{*}(K(A)\times P(K(A)))\rightarrow K(f^{*}(A))\times P(K(f^{*}(A)))$.   

\begin{definition}

A \textit{model} of an internal propositional theory $\mathbb{T}$ in a topos $\mathcal{E}$ is a subobject $X\rightarrow A$ such that when we extend $\chi_{X}$ to $K(A)\times P(K(A))$ by defining $\chi_{X}(p)=\chi_{X}(\{a_{0},...,a_{k}\})=\bigwedge_{i=0}^{k}\chi_{X}(a_{i})$ and $\chi_{X}(\bigvee_{q\in\alpha}q)=\bigvee_{q\in\alpha}\chi_{X}(q)$, we have $\chi_{X}(p)\leq\chi_{X}(\bigvee_{q\in\alpha}q)$ for every sequent $p\rightarrow\bigvee_{q\in\alpha}q$ of $\mathbb{T}$, where the latter operations are interpreted in the internal complete Heyting algebra structure of $\Omega$.  Notice that, given a sequent $p\rightarrow\bigvee_{q\in\alpha}q$ we may assume that $p\subseteq q$ for all $q\in\alpha$ since, as one easily checks, $p\rightarrow\bigvee_{q\in\alpha}p\cup q$ is true in a structure if and only if $p\rightarrow\bigvee_{q\in\alpha}q$ is.   \demo

\end{definition}

\begin{lemma}

If $X\rightarrow A$ is a model of an internal propositional theory $\mathbb{T}$ in a topos $\mathcal{E}$ and $f:\mathcal{F}\rightarrow\mathcal{E}$ is a geometric morphism, then $f^{*}(X)\rightarrow f^{*}(A)$ is a model of $f^{*}(\mathbb{T})$ in $\mathcal{F}$. 

\end{lemma}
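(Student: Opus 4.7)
The plan is to verify that the model inequality is preserved by $f^{*}$. The main tool is the canonical comparison map $\tau: f^{*}(\Omega_{\mathcal{E}}) \to \Omega_{\mathcal{F}}$ classifying the monomorphism $f^{*}(t): 1 \cong f^{*}(1) \hookrightarrow f^{*}(\Omega_{\mathcal{E}})$. By the pullback-preservation of $f^{*}$, for any subobject $Y \hookrightarrow Z$ in $\mathcal{E}$ one has $\chi_{f^{*}(Y)} = \tau \circ f^{*}(\chi_{Y})$. A standard argument, using that $f^{*}$ preserves both finite limits and all colimits (and hence images and disjoint unions of subobjects), shows that $\tau$ is an internal frame homomorphism: it preserves K-finite meets and arbitrary joins, and in particular is monotone.

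Next I would unwind the definition of $f^{*}(\mathbb{T})$: it is the image of $f^{*}(T)$ under the canonical comparison $c: f^{*}(K(A) \times P(K(A))) \to K(f^{*}(A)) \times P(K(f^{*}(A)))$, the latter being assembled from the isomorphism $f^{*}(K(A)) \cong K(f^{*}(A))$ and the canonical (and in general non-iso) map $f^{*}(P(K(A))) \to P(K(f^{*}(A)))$. A generalized sequent $(p',\alpha')$ of $f^{*}(\mathbb{T})$ therefore arises, locally, from some generalized sequent $(p,\alpha)$ of $f^{*}(T)$, and through the comparison, membership in $p'$ and in the elements of $\alpha'$ matches the $f^{*}$-image of membership in $p$ and in the elements of $\alpha$ respectively. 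It is enough to verify the model inequality for such sequents.

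The key calculation is that the extended characteristic map of $f^{*}(X)$ factors through $\tau \circ f^{*}$ applied to the extended characteristic map of $X$. Using that $\tau$ preserves K-finite meets,
\[
\chi_{f^{*}(X)}(p') = \bigwedge_{a' \in p'} \chi_{f^{*}(X)}(a') = \tau\bigl(f^{*}(\chi_{X}(p))\bigr),
\]
and using that $\tau$ preserves arbitrary joins,
\[
\chi_{f^{*}(X)}\bigl(\textstyle\bigvee_{q' \in \alpha'} q'\bigr) = \tau\bigl(f^{*}(\chi_{X}(\textstyle\bigvee_{q \in \alpha} q))\bigr).
\]
The hypothesis that $X$ is a model of $\mathbb{T}$ yields $\chi_{X}(p) \leq \chi_{X}(\bigvee_{q \in \alpha} q)$ in $\Omega_{\mathcal{E}}$; applying the monotone composite $\tau \circ f^{*}$ delivers the required inequality in $\Omega_{\mathcal{F}}$. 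The main obstacle is the bookkeeping around the non-isomorphism $f^{*}(P(K(A))) \to P(K(f^{*}(A)))$ in the definition of $f^{*}(\mathbb{T})$; once one has verified that $\tau$ is a frame homomorphism and that the extended characteristic maps transport correctly through this comparison, preservation of the model condition is automatic.
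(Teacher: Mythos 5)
Your proof is correct and follows essentially the same route as the paper's: both factor $\chi_{f^{*}(X)}$ as the canonical comparison $f^{*}(\Omega_{\mathcal{E}})\rightarrow\Omega_{\mathcal{F}}$ composed with $f^{*}(\chi_{X})$, use the isomorphism $f^{*}(K(A))\cong K(f^{*}(A))$ together with the power-object comparison to transport the extended characteristic maps, and finish by applying the monotone composite to the model inequality in $\mathcal{E}$. The only cosmetic difference is that you assert equality for the join part where the paper records only the single inequality ($\geq$) actually needed; both are fine, since the comparison map preserves internal existential quantification.
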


\begin{proof}

Let $\Omega_{\mathcal{E}}$ and $\Omega_{\mathcal{F}}$ denote the subobject classifiers of $\mathcal{E}$ and $\mathcal{F}$.  The characteristic map of $f^{*}(X)$ is the composition of the canonical map $f^{*}(\Omega_{\mathcal{E}})\rightarrow\Omega_{\mathcal{F}}$, the characteristic map of $f^{*}(t_{\mathcal{E}})$, with $f^{*}(\chi_{X})$.  Since $f^{*}(K(A))$ is canonically isomorphic to $K(f^{*}(A))$, the extension of $\chi_{f^{*}(X)}$ to $K(f^{*}(A))$ is just $f^{*}$ applied to the extension of $\chi_{X}$ to $K(A)$ composed with the canonical map.  Now if $\alpha\in P(K(A))$ and $p\in\alpha$ then $f^{*}(p)$ is in the image of $f^{*}(\alpha)$ under the canonical map $f^{*}(\Omega_{\mathcal{E}}^{K(A)})\rightarrow\Omega_{\mathcal{F}}^{K(f^{*}(A))}$, so the extension of $\chi_{f^{*}(X)}$ to $P(K(f^{*}(A)))$ at $f^{*}(\alpha)$ is greater than or equal to the composition of the canonical morphism $f^{*}(\Omega_{\mathcal{E}})\rightarrow\Omega_{\mathcal{F}}$ with $f^{*}$ applied to the extension of $\chi_{X}$ to $P(K(A))$.  The result follows immediately.  For more details, see \cite[D1]{Johns02}. 

\end{proof}

\section{Classifying Topoi for Internal Propositional Theories}
\label{Classifying Topoi for Internal Propositional Theories}

Like any geometric theory, internal propositional theories have classifying topoi.  If $\mathbb{T}$ is an internal propositional theory in a topos $\mathcal{E}$, the underlying category of the site for the classifying topos is the object $K(A)$ considered as a poset with the opposite of the usual inclusion order.  A sieve on a K-finite set $p\in K(A)$ is a subobject $R$ of $K(A)$ such that $\forall q\in R(p\subseteq q)$ and if $q\in R$ and $q\subseteq r$ then $r\in R$.  A Grothendieck topology on $K(A)$ is an assignment $J$ to each $p\in K(A)$ of a family of sieves on $p$ such that:

1.)  The maximal sieve $\{q\in K(A)|p\subseteq q\}$ is in $J(p)$

2.)  If $R\in J(p)$ and $p\subseteq p'$ then the pullback $\{q\in R|p'\subseteq q\}$ of $R$ to $p'$ is in $J(p')$

3.)  If $R\in J(p)$ and $R'$ is a sieve on $p$ such that the pullback of $R'$ to every $q\in R$ is in $J(q)$ then $R'\in J(p)$

We are now ready to construct the classifying topos for $\mathbb{T}$.  To each axiom $p\rightarrow\bigvee_{q\in\alpha}q$ of $\mathbb{T}$, where we may assume that $\forall q\in\alpha(p\subseteq q)$, we associate a sieve $R_{(p,\alpha)}=\{r\in K(A)|\exists q\in\alpha(q\subseteq r)\}$ on $K(A)$.  

If $R$ is a subset of $K(A)$ we say that $R$ \textit{directly covers} $r\in K(A)$ if either:

1.)  The maximal sieve on $p$ is contained in $R$, or

2.)  There is an axiom $p\rightarrow\bigvee_{q\in\alpha}q$ of $\mathbb{T}$ such that $p\subseteq r$ and $\forall q\in\alpha(r\cup q\in R)$

The notion of direct covering, when restricted to sieves, gives the pullback closure of the collection of sieves associated to the axioms of $\mathbb{T}$, as well as the maximal sieves.  To obtain the closure under the third requirement for Grothendieck topologies, we define for each subset $R$ of $K(A)$ the subset $\bar{R}$ to be the smallest subset of $K(A)$ such that $R\subseteq\bar{R}$ and $\bar{R}$ contains everything it directly covers.  Then the Grothendieck topology generated by the axioms of $\mathbb{T}$ is $J(p)=\{R\subseteq K(A)|R$ is a sieve on $p$ and $p\in\bar{R}\}$.  This construction/terminology is from \cite{Blass87}.  To prove our main theorem of this section we will need the following:

\begin{definition}
 
Let $\mathbb{C}$ be an internal category in a topos $\mathcal{E}$ with internal Grothendieck topology $J$ (the extension of the usual definition of a category and Grothendieck topology in the internal logic of $\mathcal{E}$) and let $f:\mathcal{F}\rightarrow\mathcal{E}$ be a geometric morphism.  A functor $F:\mathbb{C}\rightarrow\mathcal{F}$ is an internal functor $F:f^{*}(\mathbb{C})\rightarrow\mathcal{F}$ in $\mathcal{F}$.  Such a functor $F$ is called \textit{flat} if for each object $A$ of $\mathcal{F}$ the internal comma category $(A\downarrow F)$ is cofiltered, and \textit{continuous} if for each internal covering sieve $R\in J(c)$ the internal family $F(R)$ of morphisms to $F(c)$ is jointly epimorphic.  See \cite{Diaco75}. \demo

\end{definition}

\begin{lemma}

Let $\mathbb{C}$ and $F$ be as above.  If $\mathbb{C}$ has all finite limits then $F$ is flat if and only if it preserves them. 

\end{lemma}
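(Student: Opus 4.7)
The plan is to prove both implications by working in the internal logic of $\mathcal{F}$ via the Kripke-Joyal semantics; the argument is the internal analogue of the classical Diaconescu theorem that, for a functor out of a finitely complete category, flatness coincides with preservation of finite limits, and it transports without essential change since both conditions are constructive in nature.

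For the direction left exact $\Rightarrow$ flat, I would verify, for each object $A$ of $\mathcal{F}$, the three defining conditions for cofilteredness of $(A \downarrow F)$. Nonemptiness comes from preservation of the terminal, which gives the unique map $A \rightarrow F(1) \cong 1$. The common-predecessor condition for $(c, a)$ and $(c', a')$ is met by taking $(c \times c', \langle a, a' \rangle)$ together with the product projections, using preservation of binary products. For the equalization of a parallel pair $g, g': (c, a) \rightrightarrows (c', a')$, form the equalizer $e: k \rightarrow c$ in $\mathbb{C}$; preservation of equalizers together with the shared composite $F(g) \circ a = F(g') \circ a$ forces $a$ to factor as $F(e) \circ a''$, supplying the required object and morphism in $(A \downarrow F)$.

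For the converse, flat $\Rightarrow$ left exact, I would verify preservation of the terminal, binary products, and equalizers separately. In each case, the common-predecessor axiom of cofilteredness supplies the existence of the required universal map, while uniqueness follows by invoking the equalizer axiom in combination with the universal property of the corresponding limit in $\mathbb{C}$. For instance, given two candidate factorizations $h, h': A \rightarrow F(c \times c')$ with the same composites with $F(\pi_1)$ and $F(\pi_2)$, one applies common-predecessor to $(c \times c', h)$ and $(c \times c', h')$ to obtain $k, k': d \rightrightarrows c \times c'$ and $u: A \rightarrow F(d)$ with $F(k) \circ u = h$ and $F(k') \circ u = h'$; then two applications of the equalizer axiom, one per projection, yield a map $l$ along which $\pi_i \circ k \circ l = \pi_i \circ k' \circ l$ for $i = 1, 2$, whence $k \circ l = k' \circ l$ by the universal property of the product in $\mathbb{C}$, and therefore $h = h'$. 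Uniqueness for the terminal and for equalizers follows by analogous but shorter bookkeeping.

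The main obstacle is precisely this chained bookkeeping in the converse direction: the cofilteredness axioms must be applied in just the right order so that the universal property of each finite limit in $\mathbb{C}$ can be brought to bear. The internal setting introduces no new logical subtleties, since the Kripke-Joyal semantics handles everything stably under inverse images and the arguments are constructive throughout; what requires care is the combinatorial orchestration of the comma category manipulations.
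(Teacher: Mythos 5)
Your argument is correct and is essentially the standard proof of Diaconescu's characterization of flat functors on finitely complete categories; the paper itself gives no proof here, deferring entirely to the cited reference \cite{Diaco75}, so you have simply supplied the details that the paper omits. One small correction to your blanket summary of the converse direction: for preservation of equalizers the \emph{existence} of the factorization through $F(\mathrm{eq}(g,g'))$ comes from the parallel-pair (equalization) axiom of cofilteredness applied to $g,g':(c,a)\rightrightarrows(c',F(g)\circ a)$, not from the common-predecessor axiom; otherwise the outline, including the two-step equalization bookkeeping for uniqueness in the product case and the observation that everything internalizes via Kripke--Joyal, is sound.
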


\begin{proof}
 
See \cite{Diaco75}.

\end{proof}

\begin{theorem}
 
(Diaconescu)  Let $\mathbb{C}$ be an internal category in a topos $\mathcal{E}$ with internal Grothendieck topology $J$.  There is an equivalence,natural in $\mathcal{F}$, of the category $\mathfrak{Top}/\mathcal{E}(\mathcal{F},Sh_{J}(\mathcal{E}^{\mathbb{C}^{op}}))$ of geometric morphisms over $\mathcal{E}$ and natural transformations of their left adjoint parts, and the category \textbf{ContFlat}$(\mathbb{C},\mathcal{F})$ of continuous flat functors and natural transformations.

\end{theorem}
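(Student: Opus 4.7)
The plan is to prove the theorem by a two-stage reduction. First I would establish the unrestricted (presheaf) version of the theorem: for any internal category $\mathbb{C}$ in $\mathcal{E}$, there is a natural equivalence between $\mathfrak{Top}/\mathcal{E}(\mathcal{F},\mathcal{E}^{\mathbb{C}^{op}})$ and the category $\mathbf{Flat}(\mathbb{C},\mathcal{F})$ of internal flat functors $f^{*}(\mathbb{C})\rightarrow \mathcal{F}$. Second, I would cut down on both sides by identifying which geometric morphisms factor through the subtopos $Sh_{J}(\mathcal{E}^{\mathbb{C}^{op}})\hookrightarrow\mathcal{E}^{\mathbb{C}^{op}}$ and showing that these correspond exactly to the continuous flat functors.

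For the first stage, in the forward direction I would send a geometric morphism $f:\mathcal{F}\rightarrow\mathcal{E}^{\mathbb{C}^{op}}$ over $\mathcal{E}$ to the composite $F = f^{*}\circ y$, where $y:\mathbb{C}\rightarrow \mathcal{E}^{\mathbb{C}^{op}}$ is the internal Yoneda embedding (regarded as an internal functor in $\mathcal{E}$, which pulls back to an internal functor $f^{*}(\mathbb{C})\rightarrow \mathcal{F}$ in $\mathcal{F}$). Because $y$ is internally flat (representables are closed under finite limits in $\mathcal{E}^{\mathbb{C}^{op}}$ in the relevant sense) and $f^{*}$ is left exact, $F$ is flat. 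In the reverse direction, given a flat internal functor $F:f^{*}(\mathbb{C})\rightarrow\mathcal{F}$ I would define $f^{*}:\mathcal{E}^{\mathbb{C}^{op}}\rightarrow\mathcal{F}$ by internal left Kan extension along Yoneda, which can be computed as the coend $f^{*}(P)=\int^{c\in f^{*}(\mathbb{C})} f^{*}(P)(c)\cdot F(c)$, with right adjoint $f_{*}(G)(c)=[F(c),G]_{\mathcal{F}}$. The hypothesis that the internal comma category $(A\downarrow F)$ is cofiltered for every $A\in\mathcal{F}$ is precisely what is needed for this coend to commute with finite limits, so $f^{*}$ is left exact and yields a geometric morphism over $\mathcal{E}$. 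Verifying that the two constructions are mutually (pseudo)inverse reduces to checking the density/Yoneda property of representables and that colimits of representables present arbitrary presheaves.

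For the second stage, I would use the standard characterization of a subtopos: $f$ factors through $Sh_{J}(\mathcal{E}^{\mathbb{C}^{op}})$ if and only if $f^{*}$ inverts every $J$-dense monomorphism. Since covering sieves $R\in J(c)$ give precisely the generating dense monomorphisms $R\hookrightarrow y(c)$ in the presheaf topos, this condition translates to: $f^{*}$ applied to each covering sieve is an iso onto $f^{*}(y(c))=F(c)$, equivalently that the family $F(R)\rightarrow F(c)$ is jointly epimorphic in $\mathcal{F}$. This is exactly the continuity condition in the definition, so the equivalence restricts to give the desired equivalence with $\mathbf{ContFlat}(\mathbb{C},\mathcal{F})$. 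Naturality in $\mathcal{F}$ along geometric morphisms $\mathcal{F}'\rightarrow \mathcal{F}$ then follows by checking that both constructions commute with pulling back along the inverse image, which is essentially formal from the universal properties.

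The main obstacle I expect is the careful internalization of the coend construction and the flatness condition. In the external (Set-based) setting, flatness of $F:C\rightarrow \mathcal{F}$ corresponds to filteredness of the usual index category of elements, and the tensor product presentation of $f^{*}$ is straightforward; but here $\mathbb{C}$ lives in $\mathcal{E}$ while the coend must be formed in $\mathcal{F}$ using $f^{*}(\mathbb{C})$, and both the comma category $(A\downarrow F)$ and the equational data defining the coend have to be interpreted in the Kripke–Joyal semantics of $\mathcal{F}$. Once one verifies that internal filtered (equivalently cofiltered, after dualizing) colimits commute with finite limits in any topos and that the coend is such a colimit, the left exactness of $f^{*}$ drops out; this verification, rather than the formal bookkeeping of the adjunction, is where the real work lies.
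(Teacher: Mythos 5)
The paper does not prove this theorem: it is stated as a known result and its ``proof'' is simply a citation of Diaconescu's paper, so there is no in-text argument to compare yours against. Your two-stage outline --- first the presheaf case, sending $f$ to $f^{*}\circ y$ and inverting via internal left Kan extension along Yoneda, with flatness securing left exactness of the resulting inverse image; then cutting down to $Sh_{J}(\mathcal{E}^{\mathbb{C}^{op}})$ by the characterization of subtopos factorizations through inversion of $J$-dense monomorphisms, which translates exactly into continuity --- is the standard proof from the cited literature and is correct in its essentials, with the genuine work lying, as you yourself note, in internalizing the coend construction and the commutation of internal filtered colimits with finite limits rather than in the formal bookkeeping.
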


\begin{proof}
 
See \cite{Diaco75}.

\end{proof}

We're now ready to prove our main result of this section.

\begin{theorem}

$Sh_{J}(\mathcal{E}^{{K(A)}^{op}})$ is the classifying topos for $\mathbb{T}$.

\end{theorem}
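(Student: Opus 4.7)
The plan is to apply Diaconescu's theorem to reduce the classifying-topos property of $Sh_{J}(\mathcal{E}^{K(A)^{op}})$ to exhibiting, naturally in $\mathcal{F}$, a bijection between continuous flat functors $f^{*}(K(A)) \to \mathcal{F}$ and models of $f^{*}(\mathbb{T})$ in $\mathcal{F}$, where $f : \mathcal{F} \to \mathcal{E}$ is the structure morphism. The two main tasks are (i) identifying flat functors with subobjects of $f^{*}(A)$ and (ii) identifying continuity on the generating covering sieves with the model condition.

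For (i), I would observe that the poset $K(A)$ with reverse inclusion has all finite limits internally: $\emptyset$ is terminal and the product of $p$ and $q$ is $p \cup q$. By the preceding lemma, a functor $F : f^{*}(K(A)) \to \mathcal{F}$ (equivalently, on $K(f^{*}(A))$, using that K-finiteness is preserved by geometric morphisms) is flat iff it preserves these limits, so $F(\emptyset) = 1_{\mathcal{F}}$, each $F(\{a\})$ is a subterminal, and $F(p) = \bigwedge_{a \in p} F(\{a\})$. Thus $F$ is determined by the induced map $f^{*}(A) \to \Omega_{\mathcal{F}}$, i.e.\ by a subobject $X \hookrightarrow f^{*}(A)$ with $F(\{a\}) = \chi_{X}(a)$; conversely any such $X$ defines a flat $F_{X}$ by $F_{X}(p) = \chi_{X}(p)$ in the extended sense from the definition of a model.

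For (ii), recall that morphisms $r \to p$ in $f^{*}(K(A))$ exist precisely when $r \supseteq p$, and under $F_{X}$ they become inclusions of subterminals $F_{X}(r) \leq F_{X}(p)$; joint epimorphicity of a sieve $R$ is then equivalent to $F_{X}(p) \leq \bigvee_{r \in R} F_{X}(r)$ in $\mathrm{Sub}(1_{\mathcal{F}})$. For the generating sieve $R_{(p,\alpha)}$ attached to an axiom $p \to \bigvee_{q \in \alpha} q$ of $f^{*}(\mathbb{T})$ (normalized so that $p \subseteq q$ for all $q \in \alpha$), the condition $r \supseteq q$ for some $q \in \alpha$ yields $F_{X}(r) \leq F_{X}(q)$, so $\bigvee_{r \in R_{(p,\alpha)}} F_{X}(r) = \bigvee_{q \in \alpha} \chi_{X}(q)$, and continuity on $R_{(p,\alpha)}$ becomes exactly the model condition $\chi_{X}(p) \leq \bigvee_{q \in \alpha} \chi_{X}(q)$.

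The remaining obstacle, and the one I expect to require the most care, is to promote continuity on the generating sieves to continuity on all sieves in $J$. The plan is an induction on the closure operation $R \mapsto \bar{R}$ described before the statement: the maximal sieve case and stability under pullback follow immediately from $F_{X}$ preserving the terminal object and binary products, while the transitive closure step uses the fact that composing joint epimorphisms of subterminals along a sieve that is itself jointly epimorphic yields joint epimorphicity of the combined family. Combining this with the bijection of (i)--(ii) and invoking Diaconescu yields an equivalence between $\mathfrak{Top}/\mathcal{E}(\mathcal{F}, Sh_{J}(\mathcal{E}^{{K(A)}^{op}}))$ and the category of models of $f^{*}(\mathbb{T})$ in $\mathcal{F}$, natural in $\mathcal{F}$, which is the desired classifying-topos property.
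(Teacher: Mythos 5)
Your proposal is correct and follows essentially the same route as the paper: Diaconescu's theorem, the identification of flat functors on the finite-limit poset $K(A)$ with subobjects of $A$ via the extension of the characteristic map, and the translation of continuity on the generating sieves into the model condition. Your treatment of the final step (propagating joint epimorphicity through the closure $R\mapsto\bar{R}$ by handling the maximal-sieve, pullback, and transitivity cases separately) is in fact more explicit than the paper's one-line appeal to pullback-stability of epimorphic families, but it is the same argument.
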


\begin{proof}
 
By Diaconescu's theorem, geometric morphisms $f:\mathcal{F}\rightarrow Sh_{J}(\mathcal{E}^{{K(A)}^{op}})$ correspond to internal continuous flat functors $F:f^{*}(K(A))\rightarrow\mathcal{F}$.  Working in the internal logic of $\mathcal{F}$ we show that such internal continuous flat functors correspond to models of $\mathbb{T}$ in $\mathcal{F}$.  We suppress mention of $f^{*}$ since the argument takes place entirely in $\mathcal{F}$.

$(\Rightarrow)$  Let $F$ be a continuous flat functor on $K(A)$.  Since $K(A)$ has all finite limits and $F$ preserves them, $F$ takes values in $P(1)=\Omega$.  The composite of $F$ with the inclusion $A\rightarrow K(A)$ defines a subobject $X$ of $A$ and by flatness, $F$ agrees with the extension of the characteristic map of $X$ to $K(A)$.  Since $F$ is continuous it turns joins into epimorphic families, hence, if $p\rightarrow\bigvee_{q\in\alpha}q$ is an axiom of $\mathbb{T}$ we must have $F(p)\leq \bigvee_{q\in\alpha}F(q)$, but this is exactly the requirement that $X$ be a model of $\mathbb{T}$.

$(\Leftarrow)$  Let $X\rightarrow A$ be a model of $\mathbb{T}$ in $\mathcal{F}$.  Let $F$ be the extension of $\chi_{X}$ to $K(A)$.  $F$ is evidently a functor and preserves finite limits, and the condition that $X$ is a model of $\mathbb{T}$ is precisely the condition that $F$ turns covers associated to the axioms into epimorphic families.  But the pullback of an epimorphic family is again an epimorphic family, so $F$ must turn all covers into epimorphic families, so $F$ is a continuous flat functor.

These two constructions are evidently mutually inverse. 

\end{proof}

Since $K(A)$ has a terminal object, the empty set, the classifying topos of an internal propositional theory on $A$ is degenerate precisely when the empty set is covered by the empty sieve, or, more specifically, if every subset of $K(A)$ that contains everything it directly covers contains the empty set. 

\section{Internal Geometric Theories}
\label{Internal Geometric Theories}

\begin{definition}

An \textit{internal language} $\mathcal{L}$ in a topos $\mathcal{E}$ consists of:

1.)  An object $S$ of sorts

2.)  An object $C$ of constant symbols together with a morphism $\tau_{C}:C\rightarrow S$ assigning to each constant symbol a sort

3.)  An object $Rel$ of relation symbols together with a morphism $\tau_{Rel}:Rel\rightarrow S^{[n]}$ assigning to each relation symbol a tuple of sorts (sometimes called its ``arity")

4.)  An object $V\cong\mathbb{N}$ of variables\demo

\end{definition}

Since $V$ has decidable equality we can form the subobject $Dist\subseteq V^{[n]}$ consisting of tuples of distinct variables; that is, $Dist=\{(x_{0},...,x_{k})\in V^{[n]}|\bigwedge_{i<j}(x_{i}\neq x_{j})\}$.

\begin{definition}
 
Let $\mathcal{L}$ be an internal language in a topos $\mathcal{E}$.  The object $Con$ of \textit{contexts} for $\mathcal{L}$ is the subobject of $Dist\times S^{[n]}$ consisting of pairs $(\overrightarrow{x},\overrightarrow{s})$ where $\overrightarrow{x}$ and $\overrightarrow{s}$ have the same length.  $Con$ is equipped with a morphism $\tau_{Con}:Con\rightarrow S^{[n]}$ given by the second projection.  We think of a context as assiging a sort to each variable in the tuple $\overrightarrow{x}$.  We will usually supress mention of the list of sorts and just write $\overrightarrow{x}$ when talking about a context.\demo

\end{definition}

We now define formulas-in-context in stages:

\begin{definition}
 
1.)  The object $VTerm_{\mathcal{L}}$ of \textit{variable terms-in-context} is the subobject of $Con\times V$ consisting of pairs $(\overrightarrow{x},x)$ where $x$ is among the variables $\overrightarrow{x}$.  The object $CTerm_{\mathcal{L}}$ of \textit{constant terms-in-context} is the subobject of $Con\times V\times C$ consisting of triples $(\overrightarrow{x},x,c)$ where $x$ is a variable among the list $\overrightarrow{x}$ and $c$ is a constant symbol with the same sort as $x$.  The object $Term_{\mathcal{L}}$ of \textit{terms-in-context} is $VTerm_{\mathcal{L}}\sqcup CTerm_{\mathcal{L}}$.  We denote a term in the context $\overrightarrow{x}$ by $\overrightarrow{x}:t$.  $Term_{\mathcal{L}}$ is a equipped with a morphism $\tau_{Term_{\mathcal{L}}}:Term_{\mathcal{L}}\rightarrow S$ where $\tau_{Term_{\mathcal{L}}}(\overrightarrow{x}:t)$ is the $i$th sort of $\tau_{Con}(\overrightarrow{x})$ if $t$ is the variable $x_{i}$ or the sort $\tau_{C}(t)$ if $t$ is a constant symbol.

2.)  The object $RAtom_{\mathcal{L}}$ of \textit{relational atomic formulas-in-context} is the subobject of $(Con\times Rel\times Term_{\mathcal{L}}^{[n]})$ consisting of triples $(\overrightarrow{x},R,(\overrightarrow{x}:t_{0},...,\overrightarrow{x}:t_{k}))$ where the $\overrightarrow{x}:t_{i}$ are terms all in the context $\overrightarrow{x}$ and $R$ is a relation symbol with $\tau_{Rel}(R)=(\tau_{Term_{\mathcal{L}}}(\overrightarrow{x}:t_{0}),...,\tau_{Term_{\mathcal{L}}}(\overrightarrow{x}:t_{k}))$.  The object $EqAtom_{\mathcal{L}}$ of \textit{equational atomic formulas-in-context} is the subobject of $Con\times Term_{\mathcal{L}}\times Term_{\mathcal{L}}$ consisting of triples $(\overrightarrow{x},\overrightarrow{x}:t,\overrightarrow{x}:t')$  where $\overrightarrow{x}:t$ and $\overrightarrow{x}:t'$ are terms in the context $\overrightarrow{x}$ and $\tau_{Term_{\mathcal{L}}}(\overrightarrow{x}:t)=\tau_{Term_{\mathcal{L}}}(\overrightarrow{x}:t')$.  We denote a relational atomic formula in the context $\overrightarrow{x}$ by $\overrightarrow{x}:R(\overrightarrow{t})$ and we denote an equational atomic formula in the context $\overrightarrow{x}$ by $\overrightarrow{x}:(t=t')$.  The object $Atom_{\mathcal{L}}$ of \textit{atomic formulas-in-context} is $RAtom_{\mathcal{L}}\sqcup EqAtom_{\mathcal{L}}$. 

3.)  The object $Horn_{\mathcal{L}}$ of \textit{Horn formulas-in-context} (this name is non-standard, see\cite[D1.1.3]{Johns02}) is the subobject of $Con\times K(Atom_{\mathcal{L}})$ consisting of pairs $(\overrightarrow{x},\{\overrightarrow{x}:\alpha_{0},...,\overrightarrow{x}:\alpha_{k}\})$ where each $\overrightarrow{x}:\alpha_{i}$ is an atomic formula in the context $\overrightarrow{x}$.  We denote a Horn formula in the context $\overrightarrow{x}$ by $\overrightarrow{x}:\alpha_{0}\wedge...\wedge\alpha_{k}$.

4.)  The object $Reg_{\mathcal{L}}$ of \textit{regular formulas-in-context} is the subobject of $Con\times Con\times Horn_{\mathcal{L}}$ consisting of triples $(\overrightarrow{x},\overrightarrow{y},\phi)$ where $\phi$ is a Horn formula in the context $\overrightarrow{x}\overrightarrow{y}$.  We denote a regular formula in the context $\overrightarrow{x}$ by $\overrightarrow{x}:\exists\overrightarrow{y}\phi$.

5.)  The object $Geom_{\mathcal{L}}$ of \textit{geometric formulas-in-context} is the subobject of $Con\times P(Reg_{\mathcal{L}})$ consisting of pairs $(\overrightarrow{x},\Gamma)$ where $\Gamma$ is a set of regular formulas all in the context $\overrightarrow{x}$.  We denote a geometric formula in the context $\overrightarrow{x}$ by $\overrightarrow{x}:\bigvee_{\gamma\in\Gamma}\gamma$\demo 

\end{definition}

\begin{definition}
 
Let $\mathcal{L}$ be an internal language in a topos $\mathcal{E}$.  The object $GeomSeq_{\mathcal{L}}$ of \textit{geometric sequents} is the subobject of $Con\times Horn_{\mathcal{L}}\times Geom_{\mathcal{L}}$ consisting of triples $(\overrightarrow{x},\phi,\Gamma)$ such that $\phi$ and $\Gamma$ both have context $\overrightarrow{x}$.  We denote a geometric sequent by $\phi\rightarrow\bigvee_{\gamma\in\Gamma}\gamma$ and think of a geometric sequent as an implication between a horn formula and a geometric formula in the same context, universally quantified over the variables in the context.

An \textit{internal geometric theory} over $\mathcal{L}$ is a subobject $\mathbb{T}$ of $GeomSeq_{\mathcal{L}}$.\demo

\end{definition}

If $\mathcal{L}$ is an internal language and $f:\mathcal{F}\rightarrow\mathcal{E}$ is a geometric morphism, $f^{*}(\mathcal{L})$ denotes the internal langage $(f^{*}(S),f^{*}(C),f^{*}(\tau_{C}),f^{*}(Rel),f^{*}(\tau_{Rel}),f^{*}(V))$ in $\mathcal{F}$.  (Recall that the construction $A^{[n]}$ and $\mathbb{N}$ are preserved.)  Notice that the construction of $Term_{\mathcal{L}}$, $Atom_{\mathcal{L}}$, $Horn_{\mathcal{L}}$, and $Reg_{\mathcal{L}}$ are preserved by $f$; that is, for example, $f^{*}(Reg_{\mathcal{L}})\cong Reg_{f^{*}(\mathcal{L})}$.  The canonical morphism $f^{*}(P(Reg_{\mathcal{L}}))\rightarrow P(f^{*}(Reg_{\mathcal{L}}))\cong P(Reg_{f^{*}(\mathcal{L})})$ gives us a morphism $f^{*}(Geom_{\mathcal{L}})\rightarrow Geom_{f^{*}(\mathcal{L})}$ and therefore a morphism $trans:f^{*}(GeomSeq_{\mathcal{L}})\rightarrow GeomSeq_{f^{*}(\mathcal{L})}$.  Given an internal geometric theory $\mathbb{T}$ over $\mathcal{L}$ in $\mathcal{E}$ we obtain an internal geometric theory $\mathbb{T}_{f^{*}(\mathcal{L})}$, namely the image of $f^{*}(\mathbb{T})$ in $GeomSeq_{f^{*}(\mathcal{L})}$ under $trans$.  Once we have defined what an $\mathcal{L}$-structure is in $\mathcal{E}$ we will be able to define an $\mathcal{L}$-structure in any topos with a geometric morphism to $\mathcal{E}$, namely a structure for $f^{*}(\mathcal{L})$.  Once we have defined what it means for an $\mathcal{L}$-structure to be a model of an internal geometric theory $\mathbb{T}$ we will be able to say what it means for an $\mathcal{L}$-structure to be a model of $\mathbb{T}$ in any topos with a geometric morphism to $\mathcal{E}$, namely that it be a model of $\mathbb{T}_{f^{*}(\mathcal{L})}$.

\begin{definition}
 
Let $\mathcal{L}$ be an internal language in a topos $\mathcal{E}$.  An $\mathcal{L}$-\textit{structure} $\mathcal{M}$ consists of:

1.)  An object $M$ and a morphism $\sigma:M\rightarrow S$ assigning a sort to each element of $M$

2.)  A morphism $\kappa:C\rightarrow M$ such that $\sigma\circ\kappa=\tau_{C}$ assigning to each constant symbol an element of $M$ of the same sort.

3.)  A subobject $Rel^{\mathcal{M}}$ of the pullback of $\tau_{Rel}$ along $\sigma^{[n]}:M^{[n]}\rightarrow S^{[n]}$ assigning to each relation symbol a set of tuples of $M$ of the same sort\demo 

\end{definition}

We are now ready to explain how to interpret a formula-in-context in an $\mathcal{L}$-structure.
 
Let $\mathcal{M}$ be an $\mathcal{L}$-structure in a topos $\mathcal{E}$.  

1.)  Let $P_{Term_{\mathcal{L}}}$ denote the pullback of $\sigma^{[n]}$ along the morphism $\tau_{Con}\circ\pi_{1}:Term_{\mathcal{L}}\rightarrow S^{[n]}$ that sends a term $\overrightarrow{x}:t$ to $\tau_{Con}(\overrightarrow{x})$.  Let \textlbrackdbl\textrbrackdbl$_{Term_{\mathcal{L}}}:P_{Term_{\mathcal{L}}}\rightarrow M$ be the morphism such that \textlbrackdbl$(\overrightarrow{x}:t,\overrightarrow{m})$\textrbrackdbl$_{Term_{\mathcal{L}}}$ is $m_{i}$ when $t=x_{i}$ or $t=(x_{i},c)$.  We call \textlbrackdbl\textrbrackdbl$_{Term_{\mathcal{L}}}$ the \textit{interpretation} of $Term_{\mathcal{L}}$ in $\mathcal{M}$. 

2.)  Let $P_{RAtom_{\mathcal{L}}}$ denote the pullback of $\sigma^{[n]}$ along the morphism $\tau_{Con}\circ\pi_{1}:RAtom_{\mathcal{L}}\rightarrow S^{[n]}$ that sends a relational atomic formula $\overrightarrow{x}:R(\overrightarrow{t})$ to $\tau_{Con}(\overrightarrow{x})$.  Let $Proj_{R}:P_{RAtom_{\mathcal{L}}}\rightarrow P_{Term_{\mathcal{L}}}^{[n]}$ be the morphism that sends $(\overrightarrow{x}:R(\overrightarrow{t}),\overrightarrow{m})$ to $((\overrightarrow{x}:t_{0},\overrightarrow{m}),...,(\overrightarrow{x}:t_{k},\overrightarrow{m}))$ and let $Q_{RAtom_{\mathcal{L}}}$ denote the pullback of $(1_{Rel}\times$\textlbrackdbl\textrbrackdbl$_{Term_{\mathcal{L}}}^{[n]})\circ (pi_{2}\times Proj_{R}):P_{RAtom_{\mathcal{L}}}\rightarrow Rel\times M^{[n]}$ along the inclusion $Rel^{\mathcal{M}}\subseteq Rel\times M^{[n]}$.  The inclusion \textlbrackdbl\textrbrackdbl$_{RAtom_{\mathcal{L}}}:Q_{RAtom_{\mathcal{L}}}\subseteq P_{RAtom_{\mathcal{L}}}$ is the interpretation of $RAtom_{\mathcal{L}}$ in $\mathcal{M}$.  

Similarly, let $P_{EqAtom_{\mathcal{L}}}$ denote the pullback of $\sigma^{[n]}$ along the morphism $\tau_{Con}\circ \pi_{1}:EqAtom_{\mathcal{L}}\rightarrow S^{[n]}$.  Let $Proj_{Eq}:P_{EqAtom_{\mathcal{L}}}\rightarrow P_{Term_{\mathcal{L}}}\times P_{Term_{\mathcal{L}}}$ be the morphism that sends $(\overrightarrow{x}:(t=t'),\overrightarrow{m})$ to $((\overrightarrow{x}:t,\overrightarrow{m}),(\overrightarrow{x}:t',\overrightarrow{m}))$ and let $Q_{EqAtom_{\mathcal{L}}}$ denote the pullback of \textlbrackdbl\textrbrackdbl$_{Term_{\mathcal{L}}}\times$\textlbrackdbl\textrbrackdbl$_{Term_{\mathcal{L}}}\circ Proj_{Eq}:P_{EqAtom_{\mathcal{L}}}\rightarrow M\times M$ along the diagonal $M\rightarrow M\times M$.  The inclusion \textlbrackdbl\textrbrackdbl$_{EqAtom_{\mathcal{L}}}:Q_{EqAtom_{\mathcal{L}}}\subseteq P_{EqAtom_{\mathcal{L}}}$ is the interpretation of $EqAtom_{\mathcal{L}}$ in $\mathcal{M}$.   

The morphism \textlbrackdbl\textrbrackdbl$_{Atom_{\mathcal{L}}}=$\textlbrackdbl\textrbrackdbl$_{RAtom_{\mathcal{L}}}\sqcup$\textlbrackdbl\textrbrackdbl$_{EqAtom_{\mathcal{L}}}:QAtom_{\mathcal{L}}=Q_{RAtom_{\mathcal{L}}}\sqcup Q_{EqAtom{L}}\rightarrow P_{Atom_{\mathcal{L}}}=P_{RAtom_{\mathcal{L}}}\sqcup P_{EqAtom_{\mathcal{L}}}$ is the interpretation of $Atom_{\mathcal{L}}$ in $\mathcal{M}$

3.)  Let $p_{Atom_{\mathcal{L}}}:Atom_{\mathcal{L}}\rightarrow P(M^{[n]})$ and $q_{Atom_{\mathcal{L}}}:Atom_{\mathcal{L}}\rightarrow P(M^{[n]})$ be the transposes of the classifying maps for $P_{Atom_{\mathcal{L}}}$ and $Q_{Atom_{\mathcal{L}}}$ respectively as subobjects of $Atom_{\mathcal{L}}\times M^{[n]}$.  Let $P_{Horn_{\mathcal{L}}}$ and $Q_{Horn_{\mathcal{L}}}$ denote the subobjects classified by the respective transposes of the maps $\bigcap\circ\exists p_{Atom_{\mathcal{L}}}\circ \pi_{1}:Horn_{\mathcal{L}}\rightarrow P(M^{[n]})$ and $\bigcap\circ\exists q_{Atom_{\mathcal{L}}}\circ \pi_{1}:Horn_{\mathcal{L}}\rightarrow M^{[n]}$ where $\bigcap$ is the internal intersection map and $\exists p_{Atom_{\mathcal{L}}}$ and $\exists q_{Atom_{\mathcal{L}}}$ are the respective image maps of $p_{Atom_{\mathcal{L}}}$ and $q_{Atom_{\mathcal{L}}}$.  Since $Q_{Atom_{\mathcal{L}}}\subseteq P_{Atom_{\mathcal{L}}}$ we have $Q_{Horn_{\mathcal{L}}}\subseteq P_{Horn_{\mathcal{L}}}$ and one easily checks that $P_{Horn_{\mathcal{L}}}$ is isomorphic to the pullback of $\sigma^{[n]}$ along the morphism $\tau_{Con}\circ \pi_{1}:Horn_{\mathcal{L}}\rightarrow S^{[n]}$.  The morphism \textlbrackdbl\textrbrackdbl$_{Horn_{\mathcal{L}}}:Q_{Horn_{\mathcal{L}}}\subseteq P_{Horn_{\mathcal{L}}}$ is the interpretation of $Horn_{\mathcal{L}}$ in $\mathcal{M}$.

4.)  Let $P_{Reg_{\mathcal{L}}}$ denote the pullback of $\sigma^{[n]}$ along the morphism $\tau_{Con}\circ \pi_{1}:Reg_{\mathcal{L}}\rightarrow S^{[n]}$ and let $P'_{Reg_{\mathcal{L}}}$ denote the pullback of $\sigma^{[n]}$ along the morphism $\tau_{Con}\circ \pi_{12}:Reg_{\mathcal{L}}\rightarrow S^{[n]}$.  Let $Proj_{Reg_{\mathcal{L}}}:P'_{Reg_{\mathcal{L}}}\rightarrow P_{Reg_{\mathcal{L}}}$ be the map which sends $((\overrightarrow{x},\overrightarrow{y},\phi),\overrightarrow{m}\overrightarrow{n})$ to $((\overrightarrow{x},\overrightarrow{y},\phi),\overrightarrow{m})$ and let $Proj_{Horn_{\mathcal{L}}}:P'_{Reg_{\mathcal{L}}}\rightarrow P_{Horn_{\mathcal{L}}}$ be the map that sends $((\overrightarrow{x},\overrightarrow{y},\phi),\overrightarrow{m}\overrightarrow{n})$ to $((\overrightarrow{x}\overrightarrow{y},\phi),\overrightarrow{m}\overrightarrow{n})$.  Let $Q'_{Reg_{Mathcal{L}}}$ denote the pullback of $Proj_{Horn_{\mathcal{L}}}$ along the inclusion \textlbrackdbl\textrbrackdbl$_{Horn_{\mathcal{L}}}$ and let $Q_{Reg_{\mathcal{L}}}$ be the image of $Q'_{Reg_{\mathcal{L}}}$ in $P_{Reg_{\mathcal{L}}}$ under $Proj_{Reg_{\mathcal{L}}}$.  The inclusion \textlbrackdbl\textrbrackdbl$_{Reg_{\mathcal{L}}}:Q_{Reg_{\mathcal{L}}}\rightarrow P_{Reg_{\mathcal{L}}}$ is the interpretation of $Reg_{\mathcal{L}}$ in $\mathcal{M}$.

5.)  Let $p_{Geom_{\mathcal{L}}}:Reg_{\mathcal{L}}\rightarrow P(M^{[n]})$ and $q_{Reg_{\mathcal{L}}}:Reg_{\mathcal{L}}\rightarrow P(M^{[n]})$ be the transposes of the classifying maps for $P_{Reg_{\mathcal{L}}}$ and $Q_{Reg_{\mathcal{L}}}$ respectively as subobjects of $Reg_{\mathcal{L}}\times M^{[n]}$.  Let $P_{Geom_{\mathcal{L}}}$ and $Q_{Geom_{\mathcal{L}}}$ denote the subobjects classified by the respective transposes of the maps $\bigcup\circ\exists p_{Reg_{\mathcal{L}}}\circ \pi_{1}:Reg_{\mathcal{L}}\rightarrow P(M^{[n]})$ and $\bigcup\circ\exists q_{Reg_{\mathcal{L}}}\circ \pi_{1}:Reg_{\mathcal{L}}\rightarrow P(M^{[n]})$ where $\bigcup$ is the internal union map and $\exists p_{Reg_{\mathcal{L}}}$ and $\exists q_{Reg_{\mathcal{L}}}$ are the respective image maps of $p_{Atom_{\mathcal{L}}}$ and $q_{Atom_{\mathcal{L}}}$.  Since $Q_{Reg_{\mathcal{L}}}\subseteq P_{Reg_{\mathcal{L}}}$ we have $Q_{Geom_{\mathcal{L}}}\subseteq P_{Geom_{\mathcal{L}}}$ and and one easily checks that $P_{Geom_{\mathcal{L}}}$ is isomorphic to the pullback of $\sigma^{[n]}$ along the morphism $\tau_{Con}\circ \pi_{1}:Geom_{\mathcal{L}}\rightarrow S^{[n]}$.  The morphism \textlbrackdbl\textrbrackdbl$_{Geom_{\mathcal{L}}}:Q_{Geom_{\mathcal{L}}}\subseteq P_{Geom_{\mathcal{L}}}$ is the interpretation of $Geom_{\mathcal{L}}$ in $\mathcal{M}$.

\begin{definition}
 
Let $\mathcal{M}$ be an $\mathcal{L}$-structure, let $\mathbb{T}$ be an internal geometric theory, and let $P_{\mathbb{T}}$ denote the pullback of $(\sigma^{[n]},\sigma^{[n]}):M^{[n]}\rightarrow S^{[n]}\times S^{[n]}$ along the morphism $(\tau_{Con}\circ\pi_{1},\tau_{Con}\circ\pi_{1}):\mathbb{T}\rightarrow S^{[n]}\times S^{[n]}$.   $P_{\mathbb{T}}$ has obvious projections to $P_{Horn_{\mathcal{L}}}$ and $P_{Geom_{\mathcal{L}}}$.  We say that $\mathcal{M}$ is a \textit{model} of $\mathbb{T}$ if the pullback of $Q_{Horn_{\mathcal{L}}}$ along the projection to $P_{Horn_{\mathcal{L}}}$ is a subobject of the pullback of $Q_{Geom_{\mathcal{L}}}$ along the projection to $P_{Geom_{\mathcal{L}}}$.   

\end{definition}

\begin{definition}

Let $\mathcal{M}$ and $\mathcal{N}$ be $\mathcal{L}$-structures.  An $\mathcal{L}$-\textit{structure homomorphism} is a morphism $h:M\rightarrow N$ such that:

1.)  $\sigma_{\mathcal{N}}\circ h=\sigma_{\mathcal{M}}$

2.)  $\tau_{C,\mathcal{N}}\circ h=\tau_{C,\mathcal{M}}$

3.)  $Rel^{\mathcal{M}}$ is factors through the pullback of the morphism $Rel^{\mathcal{N}}\rightarrow N^{[n]}$ along $h^{[n]}$. \demo

\end{definition}

If $h:\mathcal{M}\rightarrow \mathcal{N}$ is an $\mathcal{L}$-structure homomorphism, one easily checks that $P_{Term_{\mathcal{L}},\mathcal{M}}$ is the pullback of \textlbrackdbl\textrbrackdbl$_{Term_{\mathcal{L}},\mathcal{N}}:P_{Term_{\mathcal{L}},\mathcal{N}}\rightarrow N$ along $h$.   If $Form_{\mathcal{L}}$ denotes any of $Atom_{\mathcal{L}},Horn_{\mathcal{L}},Reg_{\mathcal{L}},$ or $Geom_{\mathcal{L}}$, we have a morphism $h_{Form_{\mathcal{L}}}:P_{Form_{\mathcal{L}},\mathcal{M}}\rightarrow P_{Form_{\mathcal{L}},\mathcal{N}}$ sending $(\varphi,\overrightarrow{m})$ to $(\varphi,h^{[n]}(\overrightarrow{m}))$.  Note that $h$ \textit{preserves the interpretation of} $Form_{\mathcal{L}}$, that is, $Q_{Form_{\mathcal{L}},\mathcal{M}}$ factors through the pullback of the inclusion of $Q_{Form_{\mathcal{L}},\mathcal{N}}$ along $h_{Form_{\mathcal{L}}}$, and we say $h$ \textit{reflects the interpretation of} $Form_{\mathcal{L}}$ if $Q_{Form_\mathcal{L},\mathcal{M}}$ is isomorphic to the pullback.

\begin{definition}
 
Let $\mathbb{T}$ be an internal geometric theory and let $\mathcal{M}$ and $\mathcal{N}$ be models of $\mathbb{T}$.  A $\mathbb{T}$-\textit{model homomorphism} is an $\mathcal{L}$-structure homomorphism $h:\mathcal{M}\rightarrow\mathcal{N}$. \demo

\end{definition}

Notice that, if $\mathbb{T}$ is an internal geometric theory and $h:\mathcal{M}\rightarrow\mathcal{N}$ is an $\mathcal{L}$-structure homomorphism that both preserves and reflects interpretations of all formulas, then $\mathcal{M}$ is a model of $\mathbb{T}$ if and only if $\mathcal{N}$ is.

If $\mathcal{L}$ is an internal language in a topos $\mathcal{E}$, $\mathbb{T}$ is an internal geometric theory over $\mathcal{L}$, $\mathcal{M}$ is a model of $\mathbb{T}$, and $f:\mathcal{F}\rightarrow\mathcal{E}$ is a geometric morphism, it is tedious, but not difficult to show that $f^{*}(\mathcal{M})$ is a model of $\mathbb{T}_{f^{*}(\mathcal{L})}$, and we therefore omit the proof.  The idea is that $P_{Term_{\mathcal{L}}}$, $P_{Form_{\mathcal{L}}}$ and $Q_{Form_{\mathcal{L}}}$ are all constructed by operations that are preserved by $f^{*}$, namely pullbacks, finite intersections of subobjects, images, and arbitrary unions of subobjects.  
 
\section{Classifying Topoi for Internal Geometric Theories}
\label{Classifying Topoi for Internal Geometric Theories}

Like any geometric theory, internal geometric theories have classifying toposes.  Here we present the classifying topos of an internal geometric theory as a certain sheaf subtopos of the topos of presheaves on the category of finite presentations of $\mathcal{L}$-structures, the classifying topos for $\mathcal{L}$-structures.  We shall also denote tuples of terms in the same context $(\overrightarrow{x}:t_{0},...,\overrightarrow{x}:t_{k})$ by $\overrightarrow{x}:\overrightarrow{t}$, and if $\overrightarrow{y}:\overrightarrow{u}$ has the same length and sort, $\overrightarrow{x}:\overrightarrow{t}=\overrightarrow{y}:\overrightarrow{u}$ denotes the Horn formula $\overrightarrow{x}\overrightarrow{y}:\bigwedge_{i=0}^{k}(t_{i}=:u_{i})$.  We will omit the context when referring to terms of the form $\overrightarrow{x}:\overrightarrow{x}$.  Our presentation follows \cite{BlaSc83}, where classifying topoi for Horn theories are constructed similarly.\bigskip

\begin{definition}
 
Let $\overrightarrow{x}:\overrightarrow{t}$ be a tuple of terms, let $\overrightarrow{y}$ be a context of the same length and sort, and let $\overrightarrow{y}:\phi$ be a Horn formula in the context $\overrightarrow{y}$; then $\overrightarrow{x}:\phi[\overrightarrow{t}/\overrightarrow{y}]$ denotes the Horn formula in the context $\overrightarrow{x}$ obtained by replacing every instance of each $y_{i}$ in $\phi$ by $t_{i}$.  If the original context of $\phi$ is clear, we sill sometimes just write $\overrightarrow{x}:\phi(\overrightarrow{t})$  In particular, if $\overrightarrow{t}=\overrightarrow{x}$, $\overrightarrow{x}:\phi[\overrightarrow{x}/\overrightarrow{y}]$ denotes the Horn formula obtained by replacing every instance of each $y_{i}$ by $x_{i}$.\demo

\end{definition}

\begin{definition}

Let $\mathcal{L}$ be an internal language.  The object of \textit{finite presentations} of $\mathcal{L}$-structures is $FP_{\mathcal{L}}=Horn_{\mathcal{L}}$ modulo the equivalence relation $\overrightarrow{x}:\phi\sim \overrightarrow{y}:\phi'$ whenever the contexts $\overrightarrow{x}$ and $\overrightarrow{y}$ have the same length and list of sorts and $\overrightarrow{x}:\phi=\overrightarrow{x}:\phi'[\overrightarrow{x}/\overrightarrow{y}]$. We will denote a finite presentation by the formal class term $\{\overrightarrow{x}:\phi\}$ and we think of the variables $\overrightarrow{x}$ as bound in the term.\demo

\end{definition}

$FP_{\mathcal{L}}$ can be given the structure of an internal category as follows:  We may assume the tuples $\overrightarrow{x}$ and $\overrightarrow{y}$ are disjoint. A morphism from $\{\overrightarrow{x}:\phi\}$ to $\{\overrightarrow{y}:\psi\}$ is given by an equivalence class of Horn formulas (under the equivalence defined above) $[\overrightarrow{y}=\overrightarrow{x}:\overrightarrow{t}]$ where the tuple $\overrightarrow{x}:\overrightarrow{t}$ of terms has the same sort as $\overrightarrow{y}$, and $\overrightarrow{x}:\psi(\overrightarrow{t})$, the Horn formula-in-context obtained by replacing each occurrence of $y_{i}$ in $\overrightarrow{y}:\psi$ by $t_{i}$,  is a subformula of $\overrightarrow{x}:\phi$.  Given two morphisms $[\overrightarrow{y}=\overrightarrow{x}:\overrightarrow{t}]:\{\overrightarrow{x}:\phi\}\rightarrow\{\overrightarrow{y}:\psi\}$ and $[\overrightarrow{z}=\overrightarrow{y}:\overrightarrow{u}]:\{\overrightarrow{y}:\psi\}\rightarrow\{\overrightarrow{z}:\chi\}$, their composition is $[\overrightarrow{z}=\overrightarrow{x}:\overrightarrow{u}(\overrightarrow{t})]$, where $\overrightarrow{x}:\overrightarrow{u}(\overrightarrow{t})$ is the tuple of terms obtained by replacing every instance of $y_{i}$ in $\overrightarrow{y}:\overrightarrow{u}$ by $t_{i}$.  One easily checks that $\overrightarrow{x}:\chi(\overrightarrow{t})$ is a subformula of $\overrightarrow{x}:\phi$.\bigskip

Each finite presentation $\{\overrightarrow{x}:\phi\}$ freely generates a finitely presented $\mathcal{L}$-structure $\langle\overrightarrow{x}:\phi\rangle$ as follows:  The underlying object is $\overrightarrow{x}\sqcup C$ with $\sigma=\tau_{Con}|_{\overrightarrow{x}}\sqcup\tau_{C}$ as the structure map, modulo the equivalence relation generated by the relation: $\overrightarrow{x}:t\sim \overrightarrow{x}:t'$ iff $\overrightarrow{x}:(t=t')$ is a subformula of $\overrightarrow{x}:\phi$.  (That is, the intersection of all equivalence relations that contain the given relation considered as subobjects of $(\overrightarrow{x}\sqcup C)^{2}$.)  Constant symbols are interpreted in the obvious way, and relation symbols are interpreted by saying that $R$ holds of (an appropiate sort) tuple $\overrightarrow{t}$ of variables and constant symbols if and only if $\overrightarrow{x}:\overrightarrow{t}$ is a term-in-context and $\overrightarrow{x}:R(\overrightarrow{t})$ is a subformula of $\overrightarrow{x}:\phi$ or there are terms-in-context $\overrightarrow{x}:\overrightarrow{u}$ such that $\overrightarrow{x}:R(\overrightarrow{u})$ and $\overrightarrow{x}:(\overrightarrow{t}=\overrightarrow{u})$ are subformulas of $\overrightarrow{x}:\phi$.  A morphism $\overrightarrow{y}=\overrightarrow{x}:\overrightarrow{t}:\{\overrightarrow{x}:\phi\}\rightarrow\{\overrightarrow{y}:\psi\}$ gives rise to a homomorphism $\langle\overrightarrow{y}:\psi\rangle\rightarrow\langle\overrightarrow{x}:\phi\rangle$ which sends the equivalence class of the element $u$ to the equivalence class of $u(\overrightarrow{t})$ which is $u$ if $u$ is a constant symbol and $t_{i}$ if $u$ is $y_{i}$. This makes $\langle\rangle$ into a full and faithful contravariant functor to the category of finitely presented $\mathcal{L}$-structures.\bigskip

The category $FP_{\mathcal{L}}$ has finite limits.  It has as its terminal object $\{:\top\}$, and if $[\overrightarrow{z}=\overrightarrow{x}:\overrightarrow{t}]:\{\overrightarrow{x}:\phi\}\rightarrow\{\overrightarrow{z}:\chi\}$ and $[\overrightarrow{z}=\overrightarrow{y}:\overrightarrow{u}]:\{\overrightarrow{y}:\psi\}\rightarrow\{\overrightarrow{z}:\chi\}$ are morphisms (we may assume $\overrightarrow{x}$ and $\overrightarrow{y}$ are disjoint), then their pullback is $\{\overrightarrow{x}\overrightarrow{y}:\phi\wedge\psi\wedge(\overrightarrow{t}=\overrightarrow{u})\}$ with the obvious projections.  In addition, $FP_{\mathcal{L}}$ comes equipped with a morphism $\tau:FP_{\mathcal{L}}\rightarrow S$ which sends an object $\{\overrightarrow{x}:\phi\}$ to the sort of $\overrightarrow{x}$.\bigskip

\begin{theorem}
The presheaf topos $\mathcal{E}^{FP_{\mathcal{L}}^{op}}$ classifies $\mathcal{L}$-structures.

\end{theorem}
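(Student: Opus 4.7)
The plan is to invoke Diaconescu's theorem (Theorem 2.4.4) with the trivial Grothendieck topology on $FP_{\mathcal{L}}$, reducing the statement to a natural equivalence between finite-limit-preserving internal functors $F: f^{*}(FP_{\mathcal{L}}) \to \mathcal{F}$ and $\mathcal{L}$-structures in $\mathcal{F}$. The reduction from flat to finite-limit-preserving is legitimate by Lemma 2.4.3, since $FP_{\mathcal{L}}$ was shown to possess all finite limits just above the statement.

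For the forward direction, given an $\mathcal{L}$-structure $\mathcal{M}$ in $\mathcal{F}$, I would send $\{\overrightarrow{x}:\phi\}$ to the interpretation $[\![\overrightarrow{x}:\phi]\!]_{\mathcal{M}}$ already defined in the previous section, namely the fiber of $Q_{Horn_{\mathcal{L}}}$ over $\{\overrightarrow{x}:\phi\}$ viewed as an element of $P_{Horn_{\mathcal{L}}}$. A morphism $[\overrightarrow{y} = \overrightarrow{x}:\overrightarrow{t}]: \{\overrightarrow{x}:\phi\} \to \{\overrightarrow{y}:\psi\}$ is sent to the tuple of term interpretations $\overrightarrow{m} \mapsto ([\![\overrightarrow{x}:t_{0}]\!]_{\mathcal{M}}(\overrightarrow{m}), \ldots, [\![\overrightarrow{x}:t_{k}]\!]_{\mathcal{M}}(\overrightarrow{m}))$, which does land in $[\![\overrightarrow{y}:\psi]\!]_{\mathcal{M}}$ precisely because $\overrightarrow{x}:\psi(\overrightarrow{t})$ is a subformula of $\overrightarrow{x}:\phi$. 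Preservation of finite limits is immediate from the explicit description of finite limits in $FP_{\mathcal{L}}$ given in the text: $\{:\top\}$ interprets as $1$, and the pullback $\{\overrightarrow{x}\overrightarrow{y}:\phi \wedge \psi \wedge (\overrightarrow{t}=\overrightarrow{u})\}$ interprets as the corresponding pullback of subobjects, because conjunctions of Horn formulas interpret as intersections and equations as equalizers.

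For the reverse direction, given a finite-limit-preserving internal functor $F$, I would read off an $\mathcal{L}$-structure $\mathcal{M}_{F}$ as follows. The underlying object with sort map $\sigma: M \to S$ comes from restricting $F$ to the subobject $U \hookrightarrow FP_{\mathcal{L}}$ consisting of single-variable trivial presentations $\{x:\top\}$ (which inherits a canonical map to $S$ through $\tau: FP_{\mathcal{L}}\to S$); the constant interpretation $\kappa: C \to M$ is extracted from the isomorphisms $\{x:(x=c)\} \cong \{x:\top\}$ together with the unique morphism $\{:\top\} \to \{x:(x=c)\}$; and each relation symbol $R$ of arity $(s_{0},\ldots,s_{k})$ is interpreted as the image under $F$ of the canonical monomorphism $\{\overrightarrow{x}:R(\overrightarrow{x})\} \hookrightarrow \{\overrightarrow{x}:\top\}$ in $FP_{\mathcal{L}}$, parametrized over $Rel$.

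The main obstacle will be verifying that these constructions are mutually inverse up to natural isomorphism, and natural in $\mathcal{F}$. This amounts to showing that $F$ is determined by its values on atomic presentations (single variables, constants, and relations) together with the finite-limit structure of $FP_{\mathcal{L}}$: every finite presentation $\{\overrightarrow{x}:\phi\}$ is built by iterated pullbacks of atomic presentations, reflecting the inductive construction of $Q_{Horn_{\mathcal{L}}}$ from $Q_{Atom_{\mathcal{L}}}$ via intersections. In essence, $FP_{\mathcal{L}}$ is the free finite-limit completion of the atomic data of $\mathcal{L}$, so a finite-limit-preserving functor out of it is the same as an interpretation of the atomic data, which is exactly an $\mathcal{L}$-structure. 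Making this rigorous internally — handling the indexing over $S$, $C$, $Rel$, and $V$ inside $\mathcal{E}$ and checking that all constructions commute with $f^{*}$ — is the technical heart of the argument, but follows the pattern of \cite{BlaSc83} for Horn theories.
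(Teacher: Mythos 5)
Your proposal follows essentially the same route as the paper: both invoke Diaconescu's theorem together with the lemma that flatness equals finite-limit preservation (since $FP_{\mathcal{L}}$ has finite limits), send a structure to the functor interpreting each $\{\overrightarrow{x}:\phi\}$ as \textlbrackdbl$\overrightarrow{x}:\phi$\textrbrackdbl$_{Horn_{\mathcal{L}}}$, and recover a structure from a flat functor via its values on the presentations $\{x:\top\}$, the constant substitutions, and the subobjects $\{\overrightarrow{x}:R(\overrightarrow{x})\}\subseteq\{\overrightarrow{x}:\top\}$. The only quibble is your extraction of $\kappa$: in this formalization $\{x:(x=c)\}$ is not isomorphic to $\{x:\top\}$ and there is no morphism $\{:\top\}\rightarrow\{x:(x=c)\}$ (terms require a variable of the right sort in the context), so the constant should instead be read off from the substitution endomorphism $[x=x:c]$ of $\{x:\top\}$ as the paper does; this is a local repair, not a change of approach.
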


\begin{proof}
By Diaconescu's theorem, geometric morphisms $f:\mathcal{F}\rightarrow\mathcal{E}^{FP_{\mathcal{L}}^{op}}$ correspond to internal flat functors $F:f^{*}(FP_{\mathcal{L}})\rightarrow\mathcal{F}$.  Working in the internal logic of $\mathcal{F}$, we show that such internal flat functors correspond to $\mathcal{L}$-structures in $\mathcal{F}$.  We will suppress mention of $f^{*}$, since the argument takes place entirely in $\mathcal{F}$.\bigskip

$(\Rightarrow)$  Let $F$ be an internal flat functor on $FP_{\mathcal{L}}.$  We construct an $\mathcal{L}$-structure $\mathcal{M}$ as follows:  The underlying object $M$ is $\bigcup_{s\in S}F(\{(x,s):\top\})$.  That is, $M$ is the union of the subobjects $F(\{x:\top\})$ of the object part of $F$, and this is well-defined since $\{x:\top\}\sim\{y:\top\}$ whenever $x$ and $y$ are contexts with the same sort.  Since $F$ preserves finite limits, the fiber of $M^{[n]}$ over $\overrightarrow{s}$ is (isomorphic to) $F(\{\overrightarrow{x}:\top\})$ where $\overrightarrow{x}$ has sort $\overrightarrow{s}$, and $F(:\top)$ is (isomorphic to) the terminal object of $\mathcal{F}$.  The interpretation $\tau_{C}$ of $C$ is the morphism which sends a constant symbol $c$ to the element $F([x=x:c])$ of $\{x:\top\}$.  The interpretation $Rel^{\mathcal{M}}$ of $Rel$ in $\mathcal{M}$ is the subobject of the pullback of $\sigma$ along $\tau_{Con}$ consisting of the subobjects $\{\overrightarrow{x}:R(\overrightarrow{x})\}\subseteq\{\overrightarrow{x}:\top\}$.\bigskip

$(\Leftarrow)$  Conversely, let $\mathcal{M}$ be an $\mathcal{L}$-structure in $\mathcal{F}$.  We define a flat functor $F$ on $FP_{\mathcal{L}}$ by induction on formulas as follows:  $F(\{\overrightarrow{x}:\top\})=\{\overrightarrow{m}\in M^{[n]}|\sigma^{[n]}(\overrightarrow{m})=\tau_{Con}(\overrightarrow{x})\}$ and $F(\{\overrightarrow{x}:\phi\})=$\textlbrackdbl$\overrightarrow{x}:\phi$\textrbrackdbl$_{Horn_{\mathcal{L}}}\subseteq F(\{\overrightarrow{x}:\top\})$.  If $[\overrightarrow{y}=\overrightarrow{x}:\overrightarrow{t}]:\{\overrightarrow{x}:\phi\}\rightarrow\{\overrightarrow{y}:\psi\}$ is a morphism and $\overrightarrow{m}\in F(\{\overrightarrow{x}:\phi\})$, then $F([\overrightarrow{y}=\overrightarrow{x}:\overrightarrow{t}])=\overrightarrow{t}(\overrightarrow{m})$, where $\overrightarrow{t}(\overrightarrow{m})$ is $m_{i}$ if $t=x_{i}$ and $\tau_{C}(c)$ if $t_{i}=c$.  It is easy to check that $F$ is a functor and that it preserves finite limits, hence is flat since $FP_{\mathcal{L}}$ has all finite limits.  It is also easy to verify that the two constructions outlined above are inverse to each other up to natural isomorphism.   

\end{proof}

\begin{definition}

If $\mathbb{C}$ is an internal category in a topos $\mathcal{E}$ the object of \textit{sieves} on $\mathbb{C}$ is the subobject $Sieve_{\mathbb{C}}=\{R\in P(C_{1})|\forall m,m'\in C_{1}(m\in R\wedge m'\in R\rightarrow cod(m)=cod(m'))$ and $\forall m,m'\in C_{1}(m\in R\wedge cod(m')=dom(m)\rightarrow m\circ m'\in R)\}$ interpreted in the internal logic of $\mathcal{E}$.  If $R$ is any subobject of $C_{1}$ satisfying $\forall m,m'\in C_{1}(m\in R\wedge m'\in R\rightarrow cod(m)=cod(m'))$, it generates a sieve $\tilde{R}$ on its codomain $c$ given by $\tilde{R} = \{m\in C_{1}|(cod(m)=c)\wedge\exists m'\in R,\exists m''\in C_{1}(m'\circ m'' = m)\}$.  A \textit{Grothendieck topology} \cite{Johns02} on $\mathbb{C}$ is an assignment $J$ to each $c\in C_{0}$ a family of sieves on $c$ (that is, $J$ is a morphism from $C_{0}$ to $P(Sieves_{\mathbb{C}})$ such that $R\in J(c)\rightarrow R$ is a sieve on $c$) such that:\bigskip

1.)  The maximal sieve $\{m\in C_{1}|cod(m)=c\}$ is in $J(c)$\bigskip

2.)  If $R\in J(c)$ and $m\in C_{1}$ has $cod(m)=c$ and $dom(m)=d$ then the pullback $\{m'\in C_{1}|cod(m')=d\wedge m\circ m'\in R\}$ of $R$ along $m$ is in $J(d)$\bigskip

3.)  If $R\in J(c)$ and $R'$ is a sieve on $c$ such that it is internally true that $\forall m\in R$ the pullback of $R'$ along $m$ is in $J(dom(m))$, then $R'\in J(c)$\demo

\end{definition}

Let $\mathbb{T}$ be an $\mathcal{L}$-theory.  We now construct the classifying topos for $\mathbb{T}$-models as a sheaf subtopos of $\mathcal{E}^{FP_{\mathcal{L}}^{op}}$ for a Grothendieck topology on $FP_{\mathcal{L}}$ defined as follows:  Suppose $\phi\rightarrow\bigvee_{\gamma\in\Gamma}\gamma$ is an axiom of $\mathbb{T}$.  For each $\overrightarrow{x}:\gamma=\overrightarrow{x}:\exists\overrightarrow{y}_{\gamma}\psi_{\gamma}$ we have a morphism $[\overrightarrow{x}=\overrightarrow{x}]_{\gamma}:\{\overrightarrow{x},\overrightarrow{y}_{\gamma}:\psi_{\gamma}\wedge\phi\}\rightarrow\{\overrightarrow{x}:\phi\}$.  Let $R_{(\phi,\Gamma)}$ denote the sieve generated by all such morphisms for $\gamma\in\Gamma$.\bigskip

If $R$ is a sieve on $\{\overrightarrow{z}:\chi\}$ in $FP_{\mathcal{L}}$, we say $R$ \textit{directly covers} $\{\overrightarrow{z}:\chi\}$ if either:\bigskip

1.)  $R$ is the maximal sieve on $\{\overrightarrow{z}:\chi\}$, or\bigskip

2.)  There is an axiom $\phi\rightarrow\bigvee_{\gamma\in\Gamma}\gamma$ with $\phi$ and $\Gamma$ formulas in the context $\overrightarrow{x}$ and $\overrightarrow{x}:\gamma=\overrightarrow{x}:\exists \overrightarrow{y}_{\gamma}(\psi_{\gamma}\wedge\phi)$, and there is a morphism $[\overrightarrow{x}=\overrightarrow{z}:\overrightarrow{t}]:\{\overrightarrow{z}:\chi\}\rightarrow\{\overrightarrow{x}:\phi\}$ such for every commutative diagram:

\[
\xymatrix{\{\overrightarrow{y}_{\gamma},\overrightarrow{z}:\psi_{\gamma}(\overrightarrow{t})\wedge\chi\}  \ar[dd]^-{[\overrightarrow{x}=\overrightarrow{z}:\overrightarrow{t}],[\overrightarrow{y}_{\gamma}=\overrightarrow{y}_{\gamma}]}  \ar[rr]^-{[\overrightarrow{z}=\overrightarrow{z}]_{\gamma}}  & &  \{\overrightarrow{z}:\chi\}  \ar[dd]^-{[\overrightarrow{x}=\overrightarrow{z}:\overrightarrow{t}]}\\
\\           
\{\overrightarrow{x},\overrightarrow{y}_{\gamma}:\psi_{\gamma}\wedge\phi\}  \ar[rr]^-{[\overrightarrow{x}=\overrightarrow{x}]_{\gamma}}  & &  \{\overrightarrow{x}:\phi\}}
\]\bigskip
the morphism $[\overrightarrow{z}=\overrightarrow{z}]$ is in $R$.  The notion of direct covering gives the closure of the family of sieves generated by the axioms of $\mathbb{T}$ under the first two requirements for a Grothendieck topology.  To obtain the closure under the third, we define a family $\Upsilon$ of sieves in $FP_{\mathcal{L}}$ to be \textit{covering closed} if, for every $R\in{\Upsilon}$ and every sieve $R'$ on the same object as $R$ such that the pullback of $R'$ along each morphism in $R$ directly covers its domain, we have $R'\in{\Upsilon}$.  Given any family $\Upsilon$, we define its covering closure $\bar{\Upsilon}$ to be the smallest family of sieves such that $\Upsilon\subseteq\bar{\Upsilon}$ and $\bar{\Upsilon}$ is covering closed.  It is easy to see that such a family exists:  The intersection of any collection of covering closed families is covering closed.  For each object $\{\overrightarrow{z}:\chi\}$ of $FP_{\mathcal{L}}$, let $\Upsilon_{\{\overrightarrow{z}:\chi\}}$ be the family of all sieves on $\{\overrightarrow{z}:\chi\}$ that directly cover $\{\overrightarrow{z}:\chi\}$.  Then $J(\{\overrightarrow{z}:\chi\})=\bar{\Upsilon}_{\{\overrightarrow{z}:\chi\}}$. \bigskip

\begin{theorem}
$Sh_{J}(\mathcal{E}^{FP_{\mathcal{L}}^{op}})$ is the classifying topos for $\mathbb{T}$. 

\end{theorem}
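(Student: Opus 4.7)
The plan is to follow the same two-step pattern used for the propositional case earlier. First I would apply Diaconescu's theorem to identify geometric morphisms $f:\mathcal{F}\rightarrow Sh_{J}(\mathcal{E}^{FP_{\mathcal{L}}^{op}})$ over $\mathcal{E}$ with internal continuous flat functors $F:f^{*}(FP_{\mathcal{L}})\rightarrow\mathcal{F}$. Combining this with the preceding theorem, which establishes that (not necessarily continuous) flat functors on $FP_{\mathcal{L}}$ correspond naturally to $\mathcal{L}$-structures, reduces the problem to showing that such an $F$ is continuous with respect to $J$ if and only if the associated $\mathcal{L}$-structure $\mathcal{M}$ is a model of $\mathbb{T}$. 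As before, I would suppress $f^{*}$ and argue in the internal logic of $\mathcal{F}$.

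The heart of the argument is the treatment of the generating sieves. For each axiom $\phi\rightarrow\bigvee_{\gamma\in\Gamma}\gamma$ of $\mathbb{T}$, where $\overrightarrow{x}:\gamma=\overrightarrow{x}:\exists\overrightarrow{y}_{\gamma}\psi_{\gamma}$, the sieve $R_{(\phi,\Gamma)}$ on $\{\overrightarrow{x}:\phi\}$ is generated by the morphisms $[\overrightarrow{x}=\overrightarrow{x}]_{\gamma}:\{\overrightarrow{x},\overrightarrow{y}_{\gamma}:\psi_{\gamma}\wedge\phi\}\rightarrow\{\overrightarrow{x}:\phi\}$. Under the flat-functor-to-structure correspondence, $F$ sends $\{\overrightarrow{x}:\phi\}$ to the interpretation $Q_{Horn_{\mathcal{L}},\mathcal{M}}$ at $\overrightarrow{x}:\phi$, and each generating morphism to the tuple-projection $Q_{Horn_{\mathcal{L}},\mathcal{M}}$ at $\overrightarrow{x},\overrightarrow{y}_{\gamma}:\psi_{\gamma}\wedge\phi$ onto $Q_{Horn_{\mathcal{L}},\mathcal{M}}$ at $\overrightarrow{x}:\phi$. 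The image of this projection is precisely the interpretation of the regular formula $\overrightarrow{x}:\exists\overrightarrow{y}_{\gamma}(\psi_{\gamma}\wedge\phi)$, and the join of these images over $\gamma\in\Gamma$ is the interpretation of $\overrightarrow{x}:\phi\wedge\bigvee_{\gamma\in\Gamma}\gamma$. Hence the generating family is jointly epimorphic onto the interpretation of $\phi$ precisely when the interpretation of $\phi$ is contained in that of $\bigvee_{\gamma}\gamma$; that is, exactly when $\mathcal{M}$ satisfies the axiom.

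Next, I would verify that the family $\Upsilon_{F}$ of sieves that $F$ sends to jointly epimorphic families is automatically covering closed in the sense used to construct $J$. It contains every maximal sieve since $F$ preserves identities; it is closed under pullback along arbitrary morphisms of $FP_{\mathcal{L}}$ because $F$ is flat (hence preserves the pullbacks needed to form these restrictions) and pullbacks of jointly epimorphic families remain jointly epimorphic; and it satisfies the local-to-global condition by the standard composition argument for jointly epimorphic families. Consequently $\Upsilon_{F}$ absorbs the covering closure of any subfamily it contains, so continuity of $F$ is equivalent to $F$ sending merely the generating sieves $R_{(\phi,\Gamma)}$ to epimorphic families, which by the previous paragraph is equivalent to $\mathcal{M}$ being a model of $\mathbb{T}$. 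The two constructions are evidently inverse up to natural isomorphism, and naturality in $\mathcal{F}$ follows because each layer of the correspondence (Diaconescu, flat-functor-to-structure, continuity-to-modelling) commutes with inverse image along geometric morphisms.

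The main obstacle will be the bookkeeping needed to carry out all of these arguments internally in $\mathcal{F}$: the category $FP_{\mathcal{L}}$, the topology $J$, the interpretation objects $P_{Horn_{\mathcal{L}}}$ and $Q_{Horn_{\mathcal{L}}}$, and the image/join operations used to identify $F$ on the generators with formula interpretations are all defined internally, so each equality "image of this projection equals the interpretation of that existentially quantified formula" must be checked as a claim about the internal constructions of the previous section rather than as a set-theoretic identification.
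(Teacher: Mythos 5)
Your proposal is correct and follows essentially the same route as the paper: Diaconescu's theorem plus the preceding correspondence between flat functors and $\mathcal{L}$-structures, followed by the identification of $F$ on the generating sieves $R_{(\phi,\Gamma)}$ with the interpretations of $\phi$ and $\bigvee_{\gamma\in\Gamma}\gamma$, so that joint epimorphy of the generating family is equivalent to satisfaction of the axiom. Your explicit verification that the family of sieves sent to jointly epimorphic families is covering closed (maximal sieves, pullback stability, local-to-global) is a slightly fuller justification of the step the paper dispatches with ``it suffices to check the covers induced by axioms, since $F$ preserves pullbacks,'' but it is the same argument, not a different one.
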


\begin{proof}
By Diaconescu's theorem, geometric morphisms $f:\mathcal{F}\rightarrow Sh_{J}(\mathcal{E}^{FP_{\mathcal{L}}^{op}})$ correspond to internal continuous flat functors $F:f^{*}(FP_{\mathcal{L}})\rightarrow\mathcal{F}$.  Working in the internal logic of $\mathcal{F}$ we show that such internal continuous flat functors correspond to models of $\mathbb{T}$ in $\mathcal{F}$.  Once again we suppress mention of $f^{*}$.\bigskip

Let $F$ be an internal flat functor on $FP_{L}$ and let $\mathcal{M}$ be the $\mathcal{L}$-structure constructed in theorem II.29.  Let $\phi\rightarrow\bigvee_{\gamma\in\Gamma}\gamma$ be an axiom of $\mathbb{T}$ with $\phi$ and $\Gamma$ formulas in the context $\overrightarrow{x}$ and $\overrightarrow{x}:\gamma=\overrightarrow{x}:\exists\overrightarrow{y}_{\gamma}(\psi_{\gamma}\wedge\phi)$. By construction \textlbrackdbl$\overrightarrow{x}:\phi$\textrbrackdbl$_{Horn_{\mathcal{L}}}=F(\{\overrightarrow{x}:\phi\})$.  It is clear from the definition of interpretation in $\mathcal{M}$ that \textlbrackdbl$\overrightarrow{x}:\bigvee_{\gamma\in\Gamma}\gamma$\textrbrackdbl$_{Geom_{\mathcal{L}}}=\{\overrightarrow{m}\in M^{[n]}|$ for some $\overrightarrow{x}:\gamma=\overrightarrow{x}:\exists \overrightarrow{y}_{\gamma}\psi_{\gamma}\in\Gamma$ and some $\overrightarrow{n}\in M^{[n]}$ of the same sort as $\overrightarrow{y}_{\gamma}$ we have $\overrightarrow{m},\overrightarrow{n}\in F(\{\overrightarrow{x},\overrightarrow{y}_{\gamma}:\psi_{\gamma}\wedge\phi\})\}$.\bigskip

$(\Rightarrow)$  We assume $F$ is continuous and show that $\mathcal{M}$ is a model of $\mathbb{T}$.  Let $\phi\rightarrow\bigvee_{\gamma\in\Gamma}\gamma$ be an axiom of $\mathbb{T}$.  For each $\overrightarrow{x}:\gamma=\overrightarrow{x}:\exists\overrightarrow{y}_{\gamma}\psi_{\gamma}\in\Gamma$ we have a morphism $[\overrightarrow{x}=\overrightarrow{x}]_{\gamma}:\{\overrightarrow{x},\overrightarrow{y}_{\gamma}:\psi_{\gamma}\wedge\phi\}\rightarrow\{\overrightarrow{x}:\phi\}$ and $F([\overrightarrow{x}=\overrightarrow{x}]_{\gamma})(\overrightarrow{a})=\overrightarrow{a}$, hence \textlbrackdbl$\overrightarrow{x}:\bigvee_{\gamma\in\Gamma}\gamma$\textrbrackdbl$_{Geom{\mathcal{L}}}\subseteq$\textlbrackdbl$\overrightarrow{x}:\phi$\textrbrackdbl$_{Horn_{\mathcal{L}}}$.  But $F$ is continuous and the $[\overrightarrow{x}=\overrightarrow{x}]_{\gamma}$ generate a cover, so the $F([\overrightarrow{x}=\overrightarrow{x}]_{\gamma})$ form a jointly epimorphic family; hence \textlbrackdbl$\bigvee_{\gamma\in\Gamma}\gamma$\textrbrackdbl$_{Geom{\mathcal{L}}}$ must be all of \textlbrackdbl$\overrightarrow{x}:\phi$\textrbrackdbl$_{Geom{\mathcal{L}}}$, and $\mathcal{M}\vDash\phi\rightarrow\bigvee_{\gamma\in\Gamma}\gamma$.  The result follows since the choice of axiom was arbitrary.\bigskip

Let $\mathcal{M}$ be an $\mathcal{L}$-structure and let $F$ be the internal flat functor constructed in theorem II.29.  Let $\phi\rightarrow\bigvee_{\gamma\in\Gamma}\gamma$ be an axiom of $\mathbb{T}$ with $\phi$ and $\Gamma$ formulas in the context $\overrightarrow{x}$ and $\overrightarrow{x}:\gamma=\overrightarrow{x}:\exists\overrightarrow{y}_{\gamma}(\psi_{\gamma}\wedge\phi)$.  By construction $F(\{\overrightarrow{x}:\phi\})=$ \textlbrackdbl$\overrightarrow{x}:\phi$\textrbrackdbl$_{Horn_{\mathcal{L}}}$.  We show that $\bigcup\{im(F([\overrightarrow{x}=\overrightarrow{x}]_{\gamma})|[\overrightarrow{x}=\overrightarrow{x}]_{\gamma}:\{\overrightarrow{x},\overrightarrow{y}:\psi_{\gamma}\wedge\phi\}\rightarrow\{\overrightarrow{x}:\phi\}\}=$\textlbrackdbl$\overrightarrow{x}:\bigvee_{\gamma\in\Gamma}\gamma)$\textrbrackdbl$_{Geom{\mathcal{L}}}$.  Indeed, suppose $\overrightarrow{m}$ belongs to the left hand side.  Then for some $\gamma\in\Gamma$ and some tuple $\overrightarrow{n}$ of the same type as $\overrightarrow{y}_{\gamma}$ we have $\overrightarrow{m},\overrightarrow{n}\in F(\{\overrightarrow{x},\overrightarrow{y}:\psi_{\gamma}\wedge\phi\}$) and $F([\overrightarrow{x}=\overrightarrow{x}]_{\gamma})(\overrightarrow{m},\overrightarrow{n})=\overrightarrow{n}$.  But this is precisely the condition for membership in the right hand side.\bigskip

$(\Leftarrow)$  We assume $\mathcal{M}$ is a model of $\mathbb{T}$ and show that $F$ is continuous.  Note that it suffices to check that suffices to check that $F$ turns covers induced by axioms into epimorphic families, since $F$ preserves pullbacks.  Let $R_{(\phi,\Gamma)}$ be a cover induced by an axiom of $\mathbb{T}$ as above.  Since $\mathcal{M}$ is a model of $\mathbb{T}$, we have that \textlbrackdbl$\overrightarrow{x}:\phi$\textrbrackdbl$_{Horn_{\mathcal{L}}}\subseteq$\textlbrackdbl$\overrightarrow{x}:\bigvee_{\gamma\in\Gamma}\gamma$\textrbrackdbl$_{Geom_{\mathcal{L}}}$.  But, by above, the left hand side is $F(\{\overrightarrow{x}:\phi\})$ and the right hand side is the union of the images of the morphisms in the cover, so that $F(R_{(\phi,\gamma)})$ is jointly epimorphic.  The result follows since the choice of axiom was arbitrary.  

\end{proof}

Since $FP_{\mathcal{L}}$ has a terminal object $\{:\top\}$, the classifying topos of a geometric theory $\mathbb{T}$ over $\mathcal{L}$ is degenerate precisely when the empty sieve on $\{:\top\}$ belongs to $J(\{:\top\})$, or equivalently, every covering closed family of sieves on $\{:\top\}$ contains the empty sieve.  For more on classifying topoi, see \cite[D3]{Johns02}.

\chapter{Infinitary Deductions and Inductive Constructions}
\label{chap3}
In this chapter we describe a limited infinitary deduction calculus for internal geometric theories and show that the classifying topos of an internal geometric theory is degenerate if and only if a contradiction is provable in the deduction calculus.  Central to our work here is the notion of an inductive construction, which we describe in the first section.  Our definition follows \cite{Blass87}.

\section{Inductive Constructions}

\begin{definition}
 
An \textit{inductive construction} in a topos $\mathcal{E}$ is a triple $(X,S,P,C)$, where $X$ is an object of $\mathcal{E}$ (the ambient set), $S$ is an object of $\mathcal{E}$ (the set of construction steps), $P$ is a subset of $X\times S$ ($(x,s)\in P$ means that $x$ is a prerequisite for applying construction step $s$), and $C:S\rightarrow X$ is a morphism (the application of the construction steps).

A subset $Y\subseteq X$ is said to be \textit{closed} for the inductive construction if $\forall s\in S(\forall x\in X((x,s)\in P\rightarrow x\in Y)\rightarrow C(s)\in Y)$ is true in the internal logic of $\mathcal{E}$.  It is easy to see that the intersection of any family of closed subobjects is closed, so, given any subobject $Y\subseteq X$ there is a unique smallest subobject $\bar{Y}$ of $X$ containing $Y$ which is closed for the inductive construction.  $\bar{Y}$ is called the \textit{closure of} $Y$ in $(X,S,P,C)$, and the closure of $\emptyset$ is called the \textit{closure of} $(X,S,P,C)$.  An inductive construction is called total if its closure is the whole of $X$.\demo
 
\end{definition}
  
\begin{theorem}
 
If $(X,S,P,C)$ is an inductive construction in a topos $\mathcal{E}$ and $f^{*}:\mathcal{F}\rightarrow\mathcal{E}$ is a geometric morphism, then if $(X,S,P,C)$ is total, so is $(f^{*}(X),f^{*}(S),f^{*}(P),f^{*}(C))$.

\end{theorem}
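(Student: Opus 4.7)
The plan is to use the $f^{*} \dashv f_{*}$ adjunction to convert a closed subobject of $f^{*}X$ in $\mathcal{F}$ into a closed subobject of $X$ in $\mathcal{E}$, then invoke totality. Let $Z \subseteq f^{*}X$ be the closure of $\emptyset$ in the pulled-back construction $(f^{*}X, f^{*}S, f^{*}P, f^{*}C)$; the goal is to show $Z = f^{*}X$. Using the unit $\eta_{X}: X \to f_{*}f^{*}X$ and the fact that $f_{*}$ preserves monomorphisms, define $Y \subseteq X$ as the pullback $\eta_{X}^{-1}(f_{*}Z)$. Unwinding the adjunction gives the key characterization: a generalized element $x: U \to X$ factors through $Y$ if and only if the transpose $f^{*}x: f^{*}U \to f^{*}X$ factors through $Z$.

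The main step is to show $Y$ is closed for $(X, S, P, C)$. Given a generalized element $s: U \to S$, form the pullback $\tilde{P}_{s} \subseteq U \times X$ of $P \to S$ along $s$; externalizing the closure hypothesis, the statement that every prerequisite of $s$ lies in $Y$ is precisely the assertion that the second projection $\tilde{P}_{s} \to X$ factors through $Y$. By the characterization of $Y$, this is equivalent to $f^{*}(\tilde{P}_{s} \to X)$ factoring through $Z$. Since $f^{*}$ preserves finite limits, $f^{*}\tilde{P}_{s}$ is exactly the analogous pullback of $f^{*}P \to f^{*}S$ along $f^{*}s$, so the condition reads: every prerequisite of $f^{*}s$ lies in $Z$ in the pulled-back construction. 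Closure of $Z$ then yields that $f^{*}(C \circ s) = f^{*}C \circ f^{*}s$ factors through $Z$, which by the characterization translates back to $C \circ s$ factoring through $Y$, establishing closure.

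Thus $Y$ is a closed subobject of $X$ that contains $\emptyset$, so by totality of $(X, S, P, C)$ we conclude $Y = X$. Unwinding, this means $\eta_{X}$ factors through the monomorphism $f_{*}Z \hookrightarrow f_{*}f^{*}X$; transposing across the adjunction (and using the triangle identity $\epsilon_{f^{*}X} \circ f^{*}\eta_{X} = \mathrm{id}_{f^{*}X}$), the identity on $f^{*}X$ factors through the monomorphism $Z \hookrightarrow f^{*}X$, forcing $Z = f^{*}X$.

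The main obstacle is the Kripke--Joyal bookkeeping in the closure step: one must convert the internal hypothesis $\forall x \in X((x,s) \in P \to x \in Y)$ in $\mathcal{E}$ into an external statement about the pullback $\tilde{P}_{s}$, apply $f^{*}$, and then re-internalize on the $\mathcal{F}$ side. Preservation of finite limits by $f^{*}$ is precisely what makes the external reformulation compatible with pulling back, and universal quantification over arbitrary $\mathcal{F}$-stages is captured by the generic element of $f^{*}\tilde{P}_{s}$, bypassing any need to reflect $\mathcal{F}$-stages back to $\mathcal{E}$-stages.
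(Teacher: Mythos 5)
Your proof is correct. The paper gives no argument of its own for this theorem (it defers entirely to the cited reference of Blass), and your adjunction argument --- transporting a closed subobject $Z\subseteq f^{*}X$ of the pulled-back construction to the closed subobject $\eta_{X}^{-1}(f_{*}Z)\subseteq X$, invoking totality to get $Y=X$, and transposing back --- is precisely the standard proof given there, with the Kripke--Joyal bookkeeping (reducing the universally quantified prerequisite hypothesis to the single generic element of $f^{*}\tilde{P}_{s}$, which is legitimate because $f^{*}$ preserves the pullback defining $\tilde{P}_{s}$) handled correctly.
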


\begin{proof}

See \cite{Blass87}.

\end{proof}

\section{The Grothendieck Deduction Calculus for Internal Propositional Theories}
\label{The Grothendieck Deduction Calculus for Internal Propostitional Theories}

\begin{definition}
 
The Grothendieck rules of inference for internal propositional logic are:

1.)  $\vdash p\rightarrow p$

2.)  $\{p\rightarrow \bigvee_{q\in\alpha}q\}\vdash p'\rightarrow \bigvee_{q\in\alpha}(q\cup p')$ whenever $p\subseteq p'$

3.)  $\{p\rightarrow \bigvee_{q\in\alpha}q\}\cup\{q\rightarrow \bigvee_{r\in \alpha_{q}}r|q\in\alpha\}\vdash p\rightarrow \bigvee_{q\in\alpha, r\in\alpha_{q}}r$ and

$\{p\rightarrow\bot,p\rightarrow\bigvee_{q\in\alpha}q\}\vdash p\rightarrow\bigvee_{q\in\alpha}q$\demo  

\end{definition}

To formalize the notion of a proof in the Grothendieck deduction calculus of a sequent from an internal propositional theory $\mathbb{T}$ in a topos $\mathcal{E}$, we define an inductive construction as follows:

1.)  $X=Seq_{A}$

2.)  $S_{1}=\{p\rightarrow p|p\in K(A)\}\subseteq Seq_{A}$

$S_{2}=\{(p\rightarrow\bigvee_{q\in\alpha}q,p')|(p\rightarrow\bigvee_{q\in\alpha}q)\in Seq_{A}, p\subseteq p'\}\subseteq Seq_{A}\times K(A)$

$S_{3}=\{(p\rightarrow \bigvee_{q\in\alpha}q,\{q\rightarrow\bigvee_{r\in\alpha_{q}}r|q\in\alpha\})|\alpha,\alpha_{q}\in P(K(A))\}\sqcup Seq_{A}\subseteq Seq_{A}\times P(Seq_{A})\sqcup Seq_{A}$

and $S=S_{1}\sqcup S_{2}\sqcup S_{3}$

3.)  $P_{1}=\emptyset \subseteq X\times S_{1}$

$P_{2}=\{(\varphi,(p\rightarrow\bigvee_{q\in\alpha}q,p'))|\varphi=(p\rightarrow\bigvee_{q\in\alpha}q)\}\subseteq X\times S_{2}$

$P_{3}=\{(\varphi,(p\rightarrow \bigvee_{q\in\alpha}q,\{q\rightarrow\bigvee_{r\in\alpha_{q}}r|q\in\alpha\}))|\varphi=(p\rightarrow \bigvee_{q\in\alpha}q)$ or $\varphi\in\{q\rightarrow\bigvee_{r\in\alpha_{q}}r|q\in\alpha\}\}\sqcup\{(\varphi,p\rightarrow\bigvee_{q\in\alpha}q)|\phi=(p\rightarrow\bot)\}\subseteq X\times S_{3}$

$P=P_{1}\sqcup P_{2}\sqcup P_{3}\subseteq X\times S$

4.)  $C_{1}: S_{1}\rightarrow X$ is the inclusion

$C_{2}: S_{2}\rightarrow X$ is given by $C(p\rightarrow\bigvee_{q\in\alpha}q,p')=p'\rightarrow\bigvee_{q\in\alpha}(q\cup p')$

$C_{3}: S_{3}\rightarrow X$ is given by $C((p\rightarrow \bigvee_{q\in\alpha}q,\{q\rightarrow\bigvee_{r\in\alpha_{q}}r|q\in\alpha\}))=p\rightarrow\bigvee_{q\in\alpha,r\in\alpha_{q}}r$ and the identity on $Seq_{A}$

$C=C_{1}\sqcup C_{2}\sqcup C_{3}:S\rightarrow X$\demo

We say that a sequent is deducible from $\mathbb{T}$ in the Grothendieck deduction calculus if it belongs to the closure of $\mathbb{T}$ in the inductive construction above.  Notice that $\top\rightarrow\bot$ is deducible from $\mathbb{T}$ if and only if the closure of $\mathbb{T}$ is all of $Seq_{A}$.  We are now ready to prove:

\begin{theorem}

The classifying topos of $\mathbb{T}$ is degenerate if and only if $\top\rightarrow\bot$ is deducible from $\mathbb{T}$. 

\end{theorem}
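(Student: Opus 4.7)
The plan is to establish a two-way correspondence between the Grothendieck-calculus closure of $\mathbb{T}$ and the direct-covering closure defining the Grothendieck topology $J$. Recall that the classifying topos is degenerate iff every subset $R\subseteq K(A)$ containing everything it directly covers contains $\emptyset\in K(A)$; call such an $R$ \emph{direct-cover closed}. Without loss of generality every axiom $p\to\bigvee_{q\in\alpha}q$ of $\mathbb{T}$ satisfies $p\subseteq q$ for each $q\in\alpha$, and one checks that the three Grothendieck rules preserve this convention. Clause (1) of direct covering is automatic for any $R$, since $r$ always lies in its own maximal sieve, so only clause (2) is substantive.

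For $(\Leftarrow)$, introduce the auxiliary set
\[
D=\{(p,\alpha)\in Seq_A : \text{for every direct-cover closed }R\text{ and every }p'\supseteq p,\ (\forall q\in\alpha)(p'\cup q\in R)\Rightarrow p'\in R\}.
\]
I would show $\mathbb{T}\subseteq D$ and that $D$ is closed under the inductive construction defining Grothendieck deducibility. Membership of the axioms is immediate from clause (2) of direct covering taken with $r=p'$. Rule 1 is trivial. Rule 2 works because $D$ already quantifies over all supersets of the antecedent. For the transitivity half of Rule 3, given $(p,\alpha)\in D$ and $(q,\alpha_q)\in D$ for each $q\in\alpha$, one applies $(q,\alpha_q)\in D$ at $p'\cup q\supseteq q$: for $r\in\alpha_q$ the convention $q\subseteq r$ gives $(p'\cup q)\cup r=p'\cup r$, which lies in $R$ by the outer hypothesis, so $(q,\alpha_q)\in D$ yields $p'\cup q\in R$; then $(p,\alpha)\in D$ delivers $p'\in R$. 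The second, trivial, half of Rule 3 is immediate. Hence every deducible sequent lies in $D$; specializing to $(\emptyset,\emptyset)$ with $p'=\emptyset$ shows that deducibility of $\top\to\bot$ forces every direct-cover closed $R$ to contain $\emptyset$, which is degeneracy.

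For $(\Rightarrow)$, set $R_0=\{r\in K(A)\mid r\to\bot\text{ is Grothendieck-deducible from }\mathbb{T}\}$ and verify that $R_0$ is direct-cover closed. Given an axiom $p\to\bigvee_{q\in\alpha}q$ with $p\subseteq r$ and $r\cup q\in R_0$ for every $q\in\alpha$, Rule 2 applied to the axiom yields $r\to\bigvee_{q\in\alpha}(q\cup r)$; the transitivity half of Rule 3 applied with the premises $(r\cup q)\to\bot$, whose disjunction sets are empty, then yields $r\to\bot$, so $r\in R_0$. By degeneracy $\emptyset\in R_0$, which is exactly deducibility of $\top\to\bot$.

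The main difficulty lies in $(\Leftarrow)$: choosing an invariant $D$ that survives all three rules. The naive predicate ``$\alpha\subseteq R\Rightarrow p\in R$ for every direct-cover closed $R$'' collapses at Rule 2, because direct-cover closed subsets are not upward closed under inclusion, so the hypotheses of the enlarged sequent cannot be translated back into hypotheses about the original $\alpha$. Universally quantifying $p'$ over all supersets of $p$ inside $D$ is precisely the strengthening needed to make both Rule 2 and the transitivity step of Rule 3 go through.
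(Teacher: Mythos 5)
Your proof is correct and follows essentially the same route as the paper: the $(\Rightarrow)$ direction uses the identical set $Z=\{r : r\to\bot \text{ is deducible}\}$ and shows it is direct-cover closed via rules 2 and 3, and your $(\Leftarrow)$ invariant $D$ is the paper's set $Y$ up to replacing the minimal direct-cover-closed set $\bar{\emptyset}$ by a universal quantification over all such $R$. The only cosmetic difference is that you discharge the rule-3 step via the convention $q\subseteq r$ on consequent disjuncts (which the paper's setup already licenses), where the paper instead appeals to upward closure of $\bar{\emptyset}$.
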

   
\begin{proof}  $(\Rightarrow)$  Suppose the classifying topos of $\mathbb{T}$ is degenerate and let $Z=\{r\in K(A)|r\rightarrow\bot$ is in the closure of $\mathbb{T}\}$.  We show that $Z$ contains everything it directly covers.  Indeed, suppose $Z$ directly covers $r$.  Then either the maximal sieve on $r$ is contained in $Z$ and therefore $r\in Z$ or there is an axiom $p\rightarrow \bigvee_{q\in\alpha}q$ in $\mathbb{T}$ such that $p\subseteq r$ and $\forall q\in\alpha(q\cup r\in Z)$.  By the second rule of inference, $r\rightarrow \bigvee_{q\in\alpha}(q\cup r)$ is in the closure of $\mathbb{T}$, and each $q\cup r$ is in $Z$; hence for each $q$ in $\alpha$, $q\cup r\rightarrow\bot$ is in the closure of $\mathbb{T}$.  Thus, by the third rule of inference, $r\rightarrow \bot$ is in the closure of $\mathbb{T}$, so $r$ is in $Z$.  Since the classifying topos of $\mathbb{T}$ is degenerate, $\emptyset\in Z$, so $\top\rightarrow\bot$ is in the closure of $\mathbb{T}$ as desired.

$(\Leftarrow)$  Suppose $\top\rightarrow\bot$ is in the closure of $\mathbb{T}$.  Let $Y=\{p\rightarrow \bigvee_{q\in\alpha}q|\forall r\in K(A)(p\subseteq r\wedge\forall q\in\alpha(q\cup r\in\bar{\emptyset})\rightarrow r\in\bar{\emptyset})\}$.  We show that $Y$ is closed for the inductive construction:

1.)  Trivial

2.)  Suppose $(p\rightarrow\bigvee_{q\in\alpha}q)\in Y$ and $p\subseteq p'$.  If $p'\subseteq r$ and $\forall q\in\alpha(q\cup p'\cup r=q\cup r\in \bar{\emptyset})$ then since $p\subseteq r$ we must have $r\in \bar{\emptyset}$.  Thus $p'\rightarrow \bigvee_{q\in\alpha}q\cup p'\in Y$ since $r$ was arbitrary.

3.)  Suppose $(p\rightarrow\bigvee_{q\in\alpha}q)\in Y$ and $\forall q\in\alpha((q\rightarrow \bigvee_{r\in\alpha_{q}}r)\in Y)$.  If $p\subseteq s$ and $\forall q\in\alpha\forall r\in\alpha_{q}(r\cup s\in\bar{\emptyset})$, then also $\forall q\in\alpha,\forall r\in\alpha_{q}(r\cup q\cup s\in \bar{\emptyset})$ since $\bar{\emptyset}$ is upward closed.  But then $\forall q\in\alpha(q\cup s\in \bar{\emptyset})$ and therefore $s\in \bar{\emptyset}$.  Since $s$ was arbitrary, we must have $(p\rightarrow\bigvee_{q\in\alpha,r\in\alpha_{q}}r)\in Y$.

Since $\bar{\emptyset}$ contains everything it directly covers, $\mathbb{T}\subseteq Y$.  Then since $Y$ is closed, $(\top\rightarrow\bot)\in Y$.  But then $\emptyset\in\bar{\emptyset}$ as desired.
 
\end{proof}

\section{The Grothendieck Deduction Calculus for Internal Geometric Theories}
\label{The Groethendieck Deduction Calculus for Internal Geometric Theories}

With a little more work, we can show the same is true for internal geometric theories.

\begin{definition}
 
The Grothendieck deduction calculus for $GeomSeq_{\mathcal{L}}$ has the following rules:

1.)  $\vdash \phi\rightarrow\phi$

2.)  $\{\phi\rightarrow\bigvee_{\gamma\in\Gamma}\gamma\} \vdash \phi'\rightarrow\bigvee_{\gamma\in\Gamma}(\gamma(\overrightarrow{t})\wedge\phi')$ whenever $\overrightarrow{t}$ is a tuple of terms which are either constants or free variables of $\phi'$ and $\phi(\overrightarrow{t})$ is a subformula of $\phi'$

3.)  $\{\phi\rightarrow\bigvee_{\gamma\in\Gamma}\gamma\}\cup\{\psi_{\gamma}\rightarrow\bigvee_{\lambda\in\Lambda_{\gamma}}\lambda|\gamma\in\Gamma, \gamma=\exists\overrightarrow{y}_{\gamma}\psi_{\gamma}\} \vdash \phi\rightarrow\bigvee_{\gamma\in\Gamma \lambda\in\Lambda_{\gamma}}\lambda$ and

$\{(\phi\rightarrow\bot,\phi\rightarrow\bigvee_{\gamma\in\Gamma}\gamma\}\vdash\phi\rightarrow\bigvee_{\gamma\in\Gamma}\gamma$\demo 

\end{definition}
  
Given an internal geometric theory $\mathbb{T}$, we define an inductive construction as follows:

1.)  $X=GeomSeq_{L}$

2.)  $S1=\{\phi\rightarrow\phi|\phi\in Horn_{\mathcal{L}}\}$

$S2=\{(\phi\rightarrow\bigvee_{\gamma\in\Gamma}\gamma,\phi',\overrightarrow{t})|(\phi\rightarrow\bigvee_{\gamma\in\Gamma}\gamma)\in(GeomSeq_{\mathcal{L}}$ and $\overrightarrow{t}\in Term_{\mathcal{L}}^{[n]}$ is such that $\phi(\overrightarrow{t})$ is a subformula of $\phi'\}$

$S3=\{(\phi\rightarrow\bigvee_{\gamma\in\Gamma}\gamma,\{\psi_{\gamma}\rightarrow\bigvee_{\lambda\in\Lambda_{\gamma}}\lambda|\gamma\in\Gamma, \gamma=\exists\overrightarrow{y}_{\gamma}\psi_{\gamma}\})|\phi\in Horn_{\mathcal{L}},(\bigvee_{\gamma\in\Gamma}\gamma),(\bigvee_{\lambda\in\Lambda_{\gamma}}\lambda)\in Geom_{\mathcal{L}}\}\sqcup GeomSeq_{\mathcal{L}}$

$S=S1\sqcup S2\sqcup S3$

3.)  $P1=\emptyset\subseteq X\times S1$

$P2=\{(\varphi,(\phi\rightarrow\bigvee_{\gamma\in\Gamma}\gamma,\phi',\overrightarrow{t}))|\varphi=(\phi\rightarrow\bigvee_{\gamma\in\Gamma}\gamma)\}\subseteq X\times S2$

$P3=\{(\varphi,(\phi\rightarrow\bigvee_{\gamma\in\Gamma}\gamma,\{\psi_{\gamma}\rightarrow\bigvee_{\lambda\in\Lambda_{\gamma}}\lambda|\gamma\in\Gamma, \gamma=\exists\overrightarrow{y}_{\gamma}\psi_{\gamma}\}))|\varphi=(\phi\rightarrow\bigvee_{\gamma\in\Gamma}\gamma)$ or $\varphi\in\{\psi_{\gamma}\rightarrow\bigvee_{\lambda\in\Lambda_{\gamma}}\lambda|\gamma\in\Gamma, \gamma=\exists\overrightarrow{y}_{\gamma}\psi_{\gamma}\}\}\sqcup\{(\varphi,\phi\rightarrow\bigvee_{\gamma\in\Gamma}\gamma)|\varphi=(\phi\rightarrow\bot\}$

$P=P1\sqcup P2\sqcup P3$

4.)  $C1: S1\rightarrow X$ is the inclusion

$C2: S2\rightarrow X$ is given by $C((\phi\rightarrow\bigvee_{\gamma\in\Gamma}\gamma,\phi',\overrightarrow{t}))=\phi'\rightarrow\bigvee_{\gamma\in\Gamma}(\gamma(\overrightarrow{t})\wedge\phi')$

$C3: S3\rightarrow X$ is given by $C3((\phi\rightarrow\bigvee_{\gamma\in\Gamma}\gamma,\{\psi_{\gamma}\rightarrow\bigvee_{\lambda\in\Lambda_{\gamma}}\lambda|\gamma\in\Gamma, \gamma=\exists\overrightarrow{y}_{\gamma}\psi_{\gamma}\}))=\phi\rightarrow\bigvee_{\gamma\in\Gamma \lambda\in\Lambda_{\gamma}}\lambda$ and the identity on $GeomSeq_{\mathcal{L}}$

$C=C1\coprod C2\coprod C3$

See \cite{Johns02} for a description of the full deduction calculus for geometric logic.  We say a sequent is deducible from $\mathbb{T}$ in the Grothendieck deduction calculus if it belongs to the closure of $\mathbb{T}$ under this inductive construction.  Notice that $\top\rightarrow\bot$ is deducible from $\mathbb{T}$ iff the closure of $\mathbb{T}$ is all of $GeomSeq_{L}$.

\begin{theorem}
 
The classifying topos for $\mathbb{T}$ is degenerate iff $\top\rightarrow\bot$ is deducible from $\mathbb{T}$.

\end{theorem}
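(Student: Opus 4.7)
The plan is to adapt the propositional analogue proved earlier in this chapter, lifted through a correspondence between sieves in $FP_{\mathcal{L}}$ and geometric sequents. To each sieve $R$ on an object $\{\overrightarrow{z}:\chi\}$ of $FP_{\mathcal{L}}$ I will associate the geometric formula
\[
\sigma_R \;=\; \bigvee_{m \in R} \exists \overrightarrow{y}_m'\,(\psi_m \wedge \overrightarrow{t}_m = \overrightarrow{z})
\]
in context $\overrightarrow{z}$, where $m = [\overrightarrow{z}=\overrightarrow{y}_m:\overrightarrow{t}_m] : \{\overrightarrow{y}_m:\psi_m\} \to \{\overrightarrow{z}:\chi\}$ and $\overrightarrow{y}_m'$ lists the variables of $\overrightarrow{y}_m$ not occurring among $\overrightarrow{t}_m$. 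Under this correspondence, pullback of a sieve along a morphism corresponds to substitution of terms, and covering-closure corresponds to the combination of rules 2 and 3 of the Grothendieck deduction calculus; in particular the sieve induced by an axiom $\phi \to \bigvee_\gamma \gamma$ translates (up to a trivial conjunction with $\phi$ absorbed by rule 2) back to that axiom.

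For direction ($\Rightarrow$), assume the classifying topos is degenerate and set
\[
\mathcal{Z} \;=\; \{R \text{ a sieve on some } \{\overrightarrow{z}:\chi\} \,:\, \chi \to \sigma_R \text{ lies in the closure of } \mathbb{T}\}.
\]
First I would check that every directly-covering sieve lies in $\mathcal{Z}$: a maximal sieve produces $\chi \to \chi$, which is a rule 1 instance, while an axiom-generated sieve yields a rule 2 substitution of that axiom. Next I would check that $\mathcal{Z}$ is covering closed: given $R \in \mathcal{Z}$ and $R'$ such that every pullback $m^{*}R'$ directly covers the domain of $m$, the deducibility of $\psi_m \to \sigma_{m^{*}R'}$ for each $m \in R$ (by the previous step) splices with the deducibility of $\chi \to \sigma_R$ via rule 3, using rule 2 to substitute $\overrightarrow{t}_m$ for $\overrightarrow{z}$. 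It then follows that $\bar{\Upsilon} \subseteq \mathcal{Z}$; since degeneracy places the empty sieve on $\{:\top\}$ in $\bar{\Upsilon}$ and $\sigma_\emptyset = \bot$, the sequent $\top \to \bot$ is deducible from $\mathbb{T}$.

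For direction ($\Leftarrow$), write $c \in \mathrm{Bad}$ when the empty sieve on $c$ belongs to $\bar{\Upsilon}$, and set
\[
Y \;=\; \{\phi \to \bigvee_{\gamma \in \Gamma}\gamma \,:\, \forall\, [\overrightarrow{x}=\overrightarrow{z}:\overrightarrow{s}]:\{\overrightarrow{z}:\chi\}\to\{\overrightarrow{x}:\phi\},\ (\forall \gamma \in \Gamma,\, \{\overrightarrow{z},\overrightarrow{y}_\gamma:\psi_\gamma(\overrightarrow{s})\wedge\chi\} \in \mathrm{Bad}) \Rightarrow \{\overrightarrow{z}:\chi\} \in \mathrm{Bad}\}.
\]
For the containment $\mathbb{T} \subseteq Y$, an axiom $\phi \to \bigvee_\gamma \gamma$ together with the given morphism and the Bad-hypothesis produces an axiom-generated direct cover of $\{\overrightarrow{z}:\chi\}$ each of whose domains is Bad; a routine induction showing that $\bar{\Upsilon}$ is closed under transitivity via its own members then yields $\{\overrightarrow{z}:\chi\} \in \mathrm{Bad}$. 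Closure of $Y$ under the four inductive-construction clauses is routine: clause 1 is immediate, clause 2 is handled by composing morphisms in $FP_{\mathcal{L}}$, and clause 3 by applying the defining implication first at each $\gamma$ and then at the outer $\Gamma$. Since the closure of $\mathbb{T}$ lies in $Y$ and contains $\top \to \bot$ by hypothesis, instantiating $Y$'s defining implication at the identity morphism $\{:\top\}\to\{:\top\}$ (with empty $\Gamma$, so vacuous antecedent) yields $\{:\top\} \in \mathrm{Bad}$, i.e., the classifying topos is degenerate.

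The main obstacle will be the bookkeeping in the ($\Rightarrow$) direction: verifying that pulling back a sieve along $m = [\overrightarrow{z}=\overrightarrow{y}:\overrightarrow{t}]$ translates, under $R \mapsto \sigma_R$, into substitution of $\overrightarrow{t}$ for $\overrightarrow{z}$, and that this substitution is exactly what rule 2 licenses so that rule 3 can chain the resulting deductions. In the ($\Leftarrow$) direction the only nontrivial auxiliary fact is the observation that $\bar{\Upsilon}$ is closed under transitivity via its own members, a standard property of the covering-closure construction that parallels the analogous statement for ordinary Grothendieck topologies.
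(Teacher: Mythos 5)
Your overall strategy coincides with the paper's: both directions are handled by exhibiting, respectively, a covering-closed family of sieves defined through deducibility and a closed subobject of the inductive construction defined through covering, and your ($\Leftarrow$) direction (with ``Bad'' playing the role of the paper's quantification over arbitrary sieves $R$) is essentially the paper's argument. But there is a genuine gap in your ($\Rightarrow$) direction, located in the definition of $\mathcal{Z}$. You require that $\chi\rightarrow\sigma_{R}$ be deducible, where $\sigma_{R}$ is the disjunction indexed by \emph{all} of $R$. The Grothendieck deduction calculus has no rule that enlarges the disjunction on the right-hand side of a sequent: rule 1 produces a single disjunct, rule 2 keeps the index set $\Gamma$ fixed, and rule 3 replaces $\Gamma$ by the set of pairs $(\gamma,\lambda)$. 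Consequently your base case already fails: for the maximal sieve $R$ on $\{\overrightarrow{z}:\chi\}$ the sequent $\chi\rightarrow\chi$ is indeed a rule 1 instance, but $\chi\rightarrow\sigma_{R}$ --- whose consequent is a disjunction over \emph{every} morphism with codomain $\{\overrightarrow{z}:\chi\}$ --- is not derivable from it. The covering-closure step breaks for the same reason: splicing the deductions attached to the direct covers of the domains of the $m\in R$ via rules 2 and 3 yields $\chi\rightarrow\bigvee(\cdots)$ where the disjunction ranges only over the particular composites of each $m$ with the generators supplied by an axiom, which is in general a proper subfamily of $R'$.

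The repair is exactly the paper's formulation: put $R$ in $\mathcal{Z}$ when $\chi\rightarrow\bigvee_{m\in R_{0}}\exists\overrightarrow{y}'_{m}(\psi_{m}\wedge\overrightarrow{t}_{m}=\overrightarrow{z})$ is deducible for \emph{some} subfamily $R_{0}\subseteq R$. Maximal sieves then qualify via the singleton subfamily containing the identity, the covering-closure step only needs to land in a subfamily of $R'$, and the final conclusion is unaffected because the only subfamily of the empty sieve is empty, so membership of the empty sieve in $\mathcal{Z}$ still forces deducibility of $\top\rightarrow\bot$. (The paper also economizes by carrying out this direction only for sieves on the terminal object $\{:\top\}$, which suffices since covering closure never changes the object a sieve lives on; your version over arbitrary objects is harmless but adds substitution bookkeeping.) Your ($\Leftarrow$) direction is sound as sketched, though note that your appeal to ``transitivity of $\bar{\Upsilon}$ via its own members'' in establishing $\mathbb{T}\subseteq Y$ and the paper's corresponding unproved assertion that $\mathbb{T}\subseteq Y$ both rest on the same standard, and not entirely trivial, property of the inductively generated topology.
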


\begin{proof}
 
$(\Rightarrow)$  Suppose the classifying topos of $\mathbb{T}$ is degenerate and let $Z=\{R|R$ is a sieve on $\{:\top\}$ and there is a subfamily $\{\{\overrightarrow{z}_{i}:\chi_{i}\}\rightarrow\{:\top\}|i\in I\}\subseteq R$ such that $\top\rightarrow\bigvee_{i\in I}\exists\overrightarrow{z}_{i}\chi_{i}$ is deducible from $\mathbb{T}\}$.  We show $Z$ is covering closed.  Indeed, suppose that there is an $R\in Z$ and a sieve $R'$ on $\{:\top\}$ such that the pullback of $R'$ along each morphism $r\in R$ directly covers the domain of $r$.  Then for each $i\in I$ there is an axiom $\phi_{i}\rightarrow\bigvee_{\gamma\in\Gamma_{i}}\gamma$, where we may assume $\gamma=\exists\overrightarrow{y}_{\gamma}(\psi_{\gamma}\wedge\phi_{i})$, and a morphism $[\overrightarrow{x}_{i}=\overrightarrow{t}_{i}]:\{\overrightarrow{z}_{i}:\chi_{i}\}\rightarrow\{\overrightarrow{x}_{i}:\phi_{i}\}$ such that for every commutative diagram:

\[
\xymatrix{\{\overrightarrow{y}_{\gamma},\overrightarrow{z}_{i}:\psi_{\gamma}(\overrightarrow{t}_{i})\wedge\chi_{i}\}  \ar[dd]^-{[\overrightarrow{x}_{i}=\overrightarrow{t}_{i}],[\overrightarrow{y}_{\gamma}=\overrightarrow{y}_{\gamma}]}  \ar[rr]^-{[\overrightarrow{z}_{i}=\overrightarrow{z}_{i}]} & &  \{\overrightarrow{z}_{i}:\chi_{i}\}  \ar[dd]^-{[\overrightarrow{x}_{i}=\overrightarrow{t}_{i}]}\\
\\           
\{\overrightarrow{x}_{i},\overrightarrow{y}_{\gamma}:\psi_{\gamma}\wedge\phi_{i}\}  \ar[rr]^-{[\overrightarrow{x}_{i}=\overrightarrow{x}_{i}]} & &  \{\overrightarrow{x}_{i}:\phi_{i}\}}
\]  
the morphism $[\overrightarrow{z}_{i}=\overrightarrow{z}_{i}]$ is in the pullback of $R'$ along $\{\overrightarrow{z}_{i}:\chi_{i}\}\rightarrow\{:\top\}$.  Thus for every $i\in I$ the morphism $\{\overrightarrow{y}_{\gamma},\overrightarrow{z}_{i}:\psi_{\gamma}(\overrightarrow{t}_{i})\wedge\chi_{i}\}\rightarrow\{:\top\}$ is in $R'$.  Now since $\phi_{i}(\overrightarrow{t}_{i})$ is a subformula of $\chi_{i}$, we have that $\chi_{i}\rightarrow\bigvee_{\gamma\in\Gamma_{i}}(\gamma(\overrightarrow{t}_{i})\wedge\chi_{i})$ is in the closure of $\mathbb{T}$ by the second rule of inference.  But then $\top\rightarrow\bigvee_{i\in I,\gamma\in\Gamma_{i}}\gamma(\overrightarrow{t}_{i})\wedge\chi_{i}$ is in the closure of $\mathbb{T}$ by the third rule of inference; hence $R'$ is in $Z$.  Since $R'$ was arbitrary, $Z$ is covering closed.  But the classifying topos for $\mathbb{T}$ is degenerate, so the empty sieve is in $Z$ and therefore $\top\rightarrow\bot$ is deducible from $\mathbb{T}$.\bigskip

$(\Leftarrow)$  Suppose that $\top\rightarrow\bot$ is deducible from $\mathbb{T}$ and let $Y=\{\phi\rightarrow\bigvee_{\gamma\in\Gamma}\gamma\in GeomSeq_{\mathcal{L}}$ where we may assume $\gamma=\exists\overrightarrow{y}_{\gamma}(\psi_{\gamma}\wedge\phi)|$ For every object $\{\overrightarrow{z}:\chi\}$ of $FP_{\mathcal{L}}$, every sieve $R$ on $\{\overrightarrow{z}:\chi\}$, and every morphism $[\overrightarrow{x}=\overrightarrow{t}]:\{\overrightarrow{z}:\chi\}\rightarrow\{\overrightarrow{x}:\phi\}$, if the pullback of $R$ along $[\overrightarrow{z}=\overrightarrow{z}]_{\gamma}$ in each of the diagrams:

\[
\xymatrix{\{\overrightarrow{y}_{\gamma},\overrightarrow{z}:\psi_{\gamma}(\overrightarrow{t})\wedge\chi\}  \ar[dd]^-{[\overrightarrow{x}=\overrightarrow{t}],[\overrightarrow{y}_{\gamma}=\overrightarrow{y}_{\gamma}]}  \ar[rr]^-{[\overrightarrow{z}=\overrightarrow{z}]_{\gamma}} & &  \{\overrightarrow{z}:\chi\}  \ar[dd]^-{[\overrightarrow{x}=\overrightarrow{t}]}\\
\\           
\{\overrightarrow{x},\overrightarrow{y}_{\gamma}:\psi_{\gamma}\wedge\phi\}  \ar[rr]^-{[\overrightarrow{x}=\overrightarrow{x}]_{\gamma}} & &  \{\overrightarrow{x}:\phi\}}
\]
is in $J(\{\overrightarrow{y}_{\gamma},\overrightarrow{z}:\psi_{\gamma}(\overrightarrow{t})\wedge\chi\})$, then $R\in J(\{\overrightarrow{z}:\chi\})\}$.  We show that $Y$ is closed for the inductive construction:

1.)  Trivial

2.)  Suppose $\phi\rightarrow\bigvee_{\gamma\in\Gamma}\gamma\in Y$ and $\overrightarrow{u}$ is a tuple of terms with variables among the free variables of $\phi'$  such that $\phi(\overrightarrow{u})$ is a subformula of $\phi'$.  If the pullback of a sieve $R$ on $\{\overrightarrow{z}:\chi\}$ along $[\overrightarrow{z}=\overrightarrow{z}]_{\gamma}$ in each of the diagrams:

\[
\xymatrix{\{\overrightarrow{y}_{\gamma},\overrightarrow{z}:\psi_{\gamma}(\overrightarrow{t})\wedge\chi\}  \ar[dd]^-{[\overrightarrow{w}=\overrightarrow{t}],[\overrightarrow{y}_{\gamma}=\overrightarrow{y}_{\gamma}]}  \ar[rr]^-{[\overrightarrow{z}=\overrightarrow{z}]_{\gamma}} & &  \{\overrightarrow{z}:\chi\}  \ar[dd]^-{[\overrightarrow{x}=\overrightarrow{t}]}\\
\\           
\{\overrightarrow{w},\overrightarrow{y}_{\gamma}:\psi_{\gamma}\wedge\phi'\}  \ar[rr]^-{[\overrightarrow{w}=\overrightarrow{w}]_{\gamma}} & &  \{\overrightarrow{w}:\phi'\}}
\]

Is in $J(\{\overrightarrow{y}_{\gamma},\overrightarrow{z}:\psi_{\gamma}(\overrightarrow{t})\wedge\chi\})$ we form the composite diagram:

\[
\xymatrix{\{\overrightarrow{y}_{\gamma},\overrightarrow{z}:\psi_{\gamma}(\overrightarrow{t}(\overrightarrow{u}))\wedge\chi\}  \ar[dd]^-{[\overrightarrow{w}=\overrightarrow{t}],[\overrightarrow{y}_{\gamma}=\overrightarrow{y}_{\gamma}]}  \ar[rr]^-{[\overrightarrow{z}=\overrightarrow{z}]_{\gamma}} & &  \{\overrightarrow{z}:\chi\}  \ar[dd]^-{[\overrightarrow{x}=\overrightarrow{t}]}\\
\\           
\{\overrightarrow{w},\overrightarrow{y}_{\gamma}:\psi_{\gamma}(\overrightarrow{u})\wedge\phi'\}  \ar[dd]^{[\overrightarrow{x}=\overrightarrow{u}],[\overrightarrow{y}_{\gamma}=\overrightarrow{y}_{\gamma}]}  \ar[rr]^-{[\overrightarrow{w}=\overrightarrow{w}]_{\gamma}} & &  \{\overrightarrow{w}:\phi'\}  \ar[dd]^{[\overrightarrow{x}=\overrightarrow{u}]}\\ 
\\
\{\overrightarrow{x},\overrightarrow{y}_{\gamma}:\psi_{\gamma}\wedge\phi\}  \ar[rr]^{[\overrightarrow{x}=\overrightarrow{x}]_{\gamma}}  & &  \{\overrightarrow{x}:\phi \} }
\]
which witnesses that $R$ is in $J(\{\overrightarrow{z}:\chi\})$.  Thus $\phi'\rightarrow\bigvee_{\gamma\in\Gamma}\gamma(\overrightarrow{u})\wedge\phi'\in Y$

3.)  Suppose $\phi\rightarrow\bigvee_{\gamma\in\Gamma}\gamma\in Y$ and for each $\gamma\in\Gamma$, we have $\psi_{\gamma}\rightarrow\bigvee_{\lambda\in\Lambda_{\gamma}}\lambda\in Y$.  If the pullback of a sieve $R$ on $\{\overrightarrow{z}:\chi\}$ along $[\overrightarrow{z}=\overrightarrow{z}]_{\gamma,\lambda}$ in each of the diagrams:

\[
\xymatrix{\{\overrightarrow{w}_{\lambda},\overrightarrow{y}_{\gamma},\overrightarrow{z}:\lambda\wedge\psi_{\gamma}(\overrightarrow{t})\wedge\chi\}  \ar[dd]^-{[\overrightarrow{w}_{\lambda}=\overrightarrow{w}_{\lambda}],[\overrightarrow{x}=\overrightarrow{t}],[\overrightarrow{y}_{\gamma}=\overrightarrow{y}_{\gamma}]}  \ar[rr]^-{[\overrightarrow{z}=\overrightarrow{z}]_{\gamma}}  &  &  \{\overrightarrow{z}:\chi\}  \ar[dd]^-{[\overrightarrow{x}=\overrightarrow{t}]}\\
\\           
\{\overrightarrow{w}_{\lambda},\overrightarrow{x},\overrightarrow{y}_{\gamma}:\lambda\wedge\psi_{\gamma}\wedge\phi\}  \ar[rr]^-{[\overrightarrow{x}=\overrightarrow{x}]_{\gamma,\lambda}}  &  &  \{\overrightarrow{x}:\phi\}}
\]
is in $J(\{\overrightarrow{w}_{\lambda},\overrightarrow{y}_{\gamma},\overrightarrow{z}:\lambda\wedge\psi_{\gamma}(\overrightarrow{t})\wedge\chi\})$ then the pullback of the pullback of $R$ to $\{\overrightarrow{y}_{\gamma},\overrightarrow{z}:\psi_{\gamma}(\overrightarrow{t})\wedge\chi\}$ along each of the morphisms $[\overrightarrow{y}_{\gamma}=\overrightarrow{y}_{\gamma}]_{\lambda},[\overrightarrow{z}=\overrightarrow{z}]_{\lambda}$ in the diagrams:

\[
\xymatrix{\{\overrightarrow{w}_{\lambda},\overrightarrow{y}_{\gamma},\overrightarrow{z}:\lambda\wedge\psi_{\gamma}\wedge\chi\}  \ar[dd]^-{[\overrightarrow{x}=\overrightarrow{t}],[\overrightarrow{y}_{\gamma}=\overrightarrow{y}_{\gamma}],[\overrightarrow{z}=\overrightarrow{z}]}  \ar[rrr]^-{[\overrightarrow{y}_{\gamma}=\overrightarrow{y}_{\gamma}]_{\lambda},[\overrightarrow{z}=\overrightarrow{z}]_{\lambda}}  &  &  &  \{\overrightarrow{y}_{\gamma},\overrightarrow{z}:\psi_{\gamma}(\overrightarrow{t})\wedge\chi\}  \ar[dd]^-{[\overrightarrow{x}=\overrightarrow{t}],[\overrightarrow{y}_{\gamma}=\overrightarrow{y}_{\gamma}]}\\
\\
\{\overrightarrow{w}_{\lambda}, \overrightarrow{x}, \overrightarrow{y}_{\gamma}:\lambda\wedge\psi_{\gamma}\wedge\phi\}  \ar[rrr]^-{[\overrightarrow{x}=\overrightarrow{x}]_{\lambda},[\overrightarrow{y}_{\gamma}=\overrightarrow{y}_{\gamma}]_{\lambda}}  &  &  &  \{\overrightarrow{x},\overrightarrow{y}_{\gamma}:\psi_{\gamma}\wedge\phi\}} 
\]
is in $J(\{\overrightarrow{w}_{\lambda},\overrightarrow{y}_{\gamma},\overrightarrow{z}:\lambda\wedge\psi_{\gamma}(\overrightarrow{t})\wedge\chi\})$ as well; hence the pullback of $R$ to $\{\overrightarrow{y}_{\gamma},\overrightarrow{z}:\psi_{\gamma}(\overrightarrow{t})\wedge\chi\}$ is in $J(\{\overrightarrow{y}_{\gamma},\overrightarrow{z}:\psi_{\gamma}(\overrightarrow{t})\wedge\chi\})$.  But then the pullback of $R$ along each of the morphisms $[\overrightarrow{z}=\overrightarrow{z}]_{\gamma}$ in the diagrams:

\[
\xymatrix{\{\overrightarrow{y}_{\gamma},\overrightarrow{z}:\psi_{\gamma}(\overrightarrow{t})\wedge\chi\}  \ar[dd]^-{[\overrightarrow{x}=\overrightarrow{t}],[\overrightarrow{y}_{\gamma}=\overrightarrow{y}_{\gamma}]}  \ar[rr]^-{[\overrightarrow{z}=\overrightarrow{z}]_{\gamma}} & &  \{\overrightarrow{z}:\chi\}  \ar[dd]^-{[\overrightarrow{x}=\overrightarrow{t}]}\\
\\           
\{\overrightarrow{x},\overrightarrow{y}_{\gamma}:\psi_{\gamma}\wedge\phi\}  \ar[rr]^-{[\overrightarrow{x}=\overrightarrow{x}]_{\gamma}} & &  \{\overrightarrow{x}:\phi\}}
\]
is in $J(\{\overrightarrow{y}_{\gamma},\overrightarrow{z}:\psi_{\gamma}(\overrightarrow{t})\wedge\chi\}$; hence $R$ is in $J(\{\overrightarrow{z}:\chi\})$.  It follows that $(\phi\rightarrow\bigvee_{\gamma\in\Gamma,\lambda\in\Lambda_{\gamma}}\lambda)\in Y$.

Clearly $\mathbb{T}\subseteq Y$; hence, since $\top\rightarrow\bot$ is deducible from $\mathbb{T}$ and $Y$ is closed, $\top\rightarrow\bot\in Y$.  Then, by the condition for membership in $Y$, for every sieve $R$ on $\{\overrightarrow{z}:\chi\}$ and every morphism $\{\overrightarrow{z}:\chi\}\rightarrow\{:\top\}$, $R$ is in $J(\{\overrightarrow{z}:\chi\}$.  In particular, the empty sieve covers every object, so the classifying topos of $\mathbb{T}$ is degenerate.

\end{proof}

\section{Downward Stratified Inductive Constructions}
\label{Downward Stratified Inductive Constructions}

From the previous sections and from the definition it seems that inductive constructions are somewhat proof theoretic in nature.  One is naturally drawn to ask whether every inductive construction arises from the degeneracy of the classifying topos of a geometric theory.  The obvious approach to this question doesn't work, see the construction and counterexample in \cite{Blass87}, but we have the following partial result:

\begin{definition}

An inductive construction $(X,S,P,C)$ is called \textit{downward stratified} if there is a partial order $\leq$ on the set $X$ such that $\forall x,y\in X,\forall s\in S(P(x,s)\wedge C(s)=y\rightarrow x\leq y)$ and for every $y\in X$, every construction step $s\in S$ such that $C(s)=y$, and every $y'\leq y$ there is a construction step $s'$ such that $C(s')=y'$ and the downclosure of $\{x\in X|P(x,s)\}$ contains $\{x'\in X|P(x',s')\}$.\demo 

\end{definition}
 
Let $(X,S,P,C)$ be a downward stratified inductive construction in a topos $\mathcal{E}$ and consider the following internal site:  The underlying internal category is $X$ considered as an internal poset, and for each $x\in X$ and construction step $s$ that produces $x$, $x$ is covered by the sieve generated by $\{y\in X|P(y,s)\}$.  Since $(X,S,P,C)$ is downward stratified, the pullback of a basic cover along any morphism contains a basic cover.  Analogously to the situation for propositional theories above, we say a subset $R$ of $X$ directly covers $x\in X$ if it contains a basic cover of $x$.  Given a subset $R$ of $X$ we define $\bar{R}$ to be the smallest subset of $X$ that contains $R$ and everything it directly covers.  Then the Grothendieck topology $J$ generated by the basic covers is given by $J(x)=\{R\subseteq X|R$ is a sieve on $x$ and $x\in\bar{R}\}$.  By Diaconescu's theorem, $Sh(X,J)$ is the classifying topos for the internal geometric theory of internal continuous flat functors on $(X,J)$ and we have the following theorem:

\begin{theorem}
 
$(X,S,P,C)$ is total if and only if $Sh(X,J)$ is degenerate. 

\end{theorem}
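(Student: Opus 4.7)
The plan is to characterize degeneracy of $Sh(X,J)$ by computing the initial sheaf $0$ explicitly through the plus construction applied to the constant empty presheaf, in close parallel with the propositional argument of the previous section.

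For the easy direction $(\Leftarrow)$, suppose $(X,S,P,C)$ is total, so $\bar{\emptyset}=X$ and the empty sieve lies in $J(x)$ for every $x\in X$. The sheaf axiom applied to the empty cover forces $F(x)$ to be the limit of the empty diagram, i.e.\ the terminal object, for every sheaf $F$ and every $x$. Every sheaf is therefore canonically isomorphic to the terminal sheaf, so $Sh(X,J)$ is degenerate.

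For the converse $(\Rightarrow)$, compute the initial sheaf $0\in Sh(X,J)$ by sheafifying the empty presheaf. The only matching family for $P=\emptyset$ on a sieve $R$ is the unique vacuous one when $R=\emptyset$, so the first plus yields $\emptyset^{+}(y)\cong 1$ precisely when the empty sieve lies in $J(y)$, equivalently when $y\in\bar{\emptyset}$, and $\emptyset^{+}(y)\cong\emptyset$ otherwise. Applying plus once more, $\emptyset^{++}(x)$ is inhabited iff some $R\in J(x)$ satisfies $R\subseteq\bar{\emptyset}$. Since $\bar{\emptyset}$ is closed under direct covering by construction, $R\subseteq\bar{\emptyset}$ forces $\bar{R}\subseteq\bar{\emptyset}$, and combined with $x\in\bar{R}$ (the meaning of $R\in J(x)$) this gives $x\in\bar{\emptyset}$; the reverse implication holds by taking $R=\emptyset$. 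Hence $0(x)\cong 1$ iff $x\in\bar{\emptyset}$. If $Sh(X,J)$ is degenerate then $0\cong 1$, forcing $\bar{\emptyset}=X$, i.e.\ $(X,S,P,C)$ is total.

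The main obstacle is that the site $(X,J)$ lives internally to $\mathcal{E}$, so the plus-construction calculations and the two-step stabilization of sheafification on a Grothendieck site must be formalized in the internal logic of $\mathcal{E}$, rather than read off pointwise as if we were in $\mathbf{Set}$. The downward stratification hypothesis enters only in the background: it guarantees that $J$ is a genuine Grothendieck topology, because the pullback of a basic cover of $y$ along a morphism $y'\leq y$ contains a basic cover of $y'$, so the direct-covering relation captures the right notion. An alternative organization would identify $Sh(X,J)$ with the classifying topos of an internal propositional theory whose sentence symbols are the elements of $X$ and whose axioms $x\rightarrow\bigvee_{s:\,C(s)=x}\bigwedge_{y\,:\,P(y,s)}y$ encode the basic covers, and then invoke the degeneracy-versus-deducibility theorem of the previous section directly; the inductive construction witnessing deducibility of $\top\rightarrow\bot$ would then coincide with the given $(X,S,P,C)$.
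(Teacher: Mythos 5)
Your reduction of degeneracy to the statement ``the empty sieve covers every object, i.e.\ $\bar{\emptyset}=X$'' is fine (the plus-construction computation of the initial sheaf is a legitimate, if more elaborate, way to see what the paper simply asserts). The genuine gap is in the passage between that statement and totality. Totality means that the closure $Y$ of $\emptyset$ under the \emph{inductive construction} is all of $X$, and $Y$ and the direct-covering closure $\bar{\emptyset}$ are a priori different: $Y$ is the smallest set satisfying ``if all prerequisites of $s$ lie in the set then so does $C(s)$,'' whereas $\bar{\emptyset}$ is the smallest set satisfying ``if the set contains the sieve generated by the prerequisites of $s$ --- that is, their \emph{downclosure} --- then it contains $C(s)$.'' The inclusion $\bar{\emptyset}\subseteq Y$ is automatic (a set closed for the construction contains everything it directly covers), which rescues your $(\Rightarrow)$ direction; but your $(\Leftarrow)$ direction opens with ``total, so $\bar{\emptyset}=X$,'' and this needs the reverse inclusion $Y\subseteq\bar{\emptyset}$, i.e.\ that $\bar{\emptyset}$ is closed for the inductive construction. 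That in turn requires knowing that $\bar{\emptyset}$ is downward closed, so that containing the prerequisites of $s$ forces containing their downclosure and hence a basic cover of $C(s)$. You never establish this, and it is the entire content of the paper's argument.

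It is also exactly where downward stratification does its real work, contrary to your remark that it ``enters only in the background'' to make $J$ pullback-stable. The paper first shows, by a minimality argument, that every element of $\bar{\emptyset}$ is directly covered by $\bar{\emptyset}$, and then uses the stratification hypothesis to transfer such a direct cover from $x$ to any $x'\leq x$, concluding that $\bar{\emptyset}$ is downward closed and therefore closed for the inductive construction; this gives $Y=\bar{\emptyset}$ and the theorem follows. A smaller point: your closing alternative, encoding the construction as a propositional theory with axioms $x\rightarrow\bigvee_{s:\,C(s)=x}\bigwedge_{y:\,P(y,s)}y$, is not available in general, since the conjunctions occurring in an internal geometric propositional sequent must be K-finite and the prerequisite sets $\{y\mid P(y,s)\}$ need not be; moreover the site of such a theory is $K(X)$ rather than $X$, so the identification with $(X,J)$ would itself require an argument.
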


\begin{proof}

Let $Y$ denote the closure of $(X,S,P,C)$.  $\bar{\emptyset}$ is the intersection of all of the subsets of $X$ that contain everything they directly cover.  If $x\in \bar{\emptyset}$ and $x'\leq x$, then, being the smallest set that contains everything it directly covers, $\bar{\emptyset}$ must directly cover $x$.  (Let $Z=\{y\in\bar{\emptyset}|\bar{\emptyset}$ directly covers $y\}$.  If $Z$ directly covers $y$ so does $\bar{0}$ since $Z\subseteq\bar{\emptyset}$; hence $y\in Z$.  Since $Z$ contains everything it directly covers we must also have $\bar{\emptyset}\subseteq Z$.)  Since $(X,S,P,C)$ is downward stratified, $\bar{\emptyset}$ must also directly cover $x'$; hence $x'\in\bar{\emptyset}$.  Thus $\bar{\emptyset}$ is downward closed.  If $s\in S$ is a construction step and $\forall y\in X(P(y,s)\rightarrow y\in\bar{\emptyset})$ then $\bar{\emptyset}$ contains the downclosure of $\{y\in X|P(y,s)\}$, so $\bar{\emptyset}$ directly covers $C(s)$.  Then $C(s)\in\bar{\emptyset}$; hence $\bar{\emptyset}$ is closed, since $s\in S$ was arbitrary.  Therefore $Y\subseteq\bar{\emptyset}$.  Conversely, if $Y$ directly covers $x\in X$, it contains a set of prerequisites for $x$; hence $x\in Y$ since $Y$ is closed.  Thus $Y$ contains everything it directly covers, so $\bar{\emptyset}\subseteq Y$.  If the construction is total, then $Y=X$ and therefore every element of $X$ is covered by the empty sieve, so the classifying topos is degenerate.  Conversely, if the classifying topos is degenerate, $\bar{\emptyset}=X$, hence every element of $X$ belongs to the closure, so the construction is total. 

\end{proof}

\chapter{Splitting Epimorphisms}
\label{chap4}
In this chapter we show that if $p:A\rightarrow B$ is an epimorphism in a Boolean topos $\mathcal{E}$, there is a topos $\mathcal{F}$ and a surjective geometric morphism $f:\mathcal{F}\rightarrow\mathcal{E}$ such that $f^{*}(p)$ is split in $\mathcal{F}$.  For Grothendieck topoi, not necessarily Boolean, this follows from Barr's theorem \cite[Thm7.57]{Johns77}.  However, Barr's theorem makes an essential use of the axiom of choice in $Sets$ and is therefore not suitable for our purposes.

Let $p:A\rightarrow B$ be an epimorphism in a non-degenerate Boolean topos $\mathcal{E}$.  We define an internal propositional theory $\mathbb{S}$ whose models are sections of $p$ as follows:  The sentence symbols are the members of $B\times A$ and the axioms are:\bigskip

1.)  $\top\rightarrow\bigvee_{a\in A}(b,a)$\hfill for each $b\in B$\bigskip

2.)  $(b,a)\wedge(b,a')\rightarrow$ \textlbrackdbl $a=a'$ \textrbrackdbl \hfill for each $a,a' \in A$, $b\in B$\bigskip

3.)  $(b,a)\rightarrow$ \textlbrackdbl $p(a)=b$ \textrbrackdbl \hfill for each $a\in A$, $b\in B$\bigskip

where \textlbrackdbl$x=y$\textrbrackdbl is shorthand for $\bigvee\{\top|x=y\}$.

Since $p$ is an epimorphism, a model of this theory will be the graph of a morphism from $B$ to $A$ which is a section of $p$.  We show that when $B$ is inhabited then the classifying topos $\mathcal{E}[\mathbb{S}]$ of this theory is non-degenerate, so the inverse image of $p$ under the canonical geometric morphism from $\mathcal{E}[\mathbb{S}]$ to $\mathcal{E}$ has a section, namely the universal model of $\mathbb{S}$.  Moreover, we show that this geometric morphism is a surjection.\bigskip

The classifying topos for $\mathbb{S}$ is constructed as follows:  The underlying (internal) category of the site of definitions is the object $K(B\times A)$ of K-finite subobjects of $B\times A$ considered as an (internal) poset with the opposite of the inclusion order.  The first axiom scheme tells us that for each $b\in B$ the sieve generated by the family $\{\{(b,a)\}|a\in A\}$ of singletons covers the terminal object $\emptyset \in K(B\times A)$.  The second axiom scheme tells us that for each $a,a'\in A$, $b\in B$ $\{(b,a),(b,a')\}$ is covered by the sieve which is equal to the principal sieve to the extent that $a=a'$ and is empty to the extent that $a\neq a'$.  In particular, if $a\neq a'$, $\{(b,a),(b,a')\}$ is covered by the empty sieve.  The final axiom scheme tells us that singleton $\{(b,a)\}$ is covered by the sieve which is equal to the principal sieve to the extent that $p(a)=b$ and is empty to the extent that $p(a)\neq b$.  In particular, if $p(a)\neq b$, $\{(b,a)\}$ is covered by the empty sieve.  The Grothendieck topology on $K(B\times A)$ is then obtained by closing these covers under pullback and iteration.\bigskip

We give a description of the covers thus obtained.  We say a subset $R\subseteq K(B\times A)$ \textit{directly covers} $x\in K(B\times A)$ if one of the following holds:\bigskip

1.)  $\exists b\in B$ such that $x\cup \{(b,a)\}\in R$ for all $a\in A$.\bigskip

2.)  $\exists a,a'\in A$, $b\in B$ such that $(b,a), (b,a')\in x$ and $a=a'\rightarrow x\in R$  \bigskip 

3.)  $\exists a\in A$, $b\in B$ such that $(b,a)\in x$ and $p(a)=b\rightarrow x\in R$.\bigskip 

The notion of direct covering, when restricted to sieves, gives the pullback closure of the covers described above.  To get the full Grothendieck topology, let $\bar{R}$ be the smallest subset of $K(B\times A)$ containing $R$ and every element $x$ that it directly covers.  Then the Grothendieck topology on $K(B\times A)$ can be described as follows:  $J(x)=\{R\subseteq K(B\times A)|R$ is a sieve on $x$ and $x\in \bar{R}\}$. We will require the following:

\begin{definition}
 
Let $\mathbb{C}$ be an internal category in a topos $\mathcal{E}$ with internal Grothendieck topology $J$.  A subcategory $\mathbb{D}$ of $\mathbb{C}$ is called $J$-\textit{dense} \cite{Johns77} if:

1.)  Every object $c\in\mathbb{C}$ has a covering sieve $R\in J(c)$ generated by morphisms whose domains are in $\mathbb{D}$, and

2.)  For any morphism $g:c\rightarrow d$ in $\mathbb{C}$ with $d\in\mathbb{D}$ there is a covering sieve $R\in J(c)$ generated by morphisms $h:b\rightarrow c$ for which the composite $g\circ h$ is in $\mathbb{D}$. 

If $\mathbb{D}$ is $J$-dense, $J$ restricts to a Grothendieck topology $J_{\mathbb{D}}$ on $\mathbb{D}$ by setting $J_{\mathbb{D}}(d)=\{R\cap\mathbb{D}|R\in J(d)\}$.\demo

\end{definition}

\begin{lemma}
 
(Comparison Lemma)  Let $\mathbb{C}$ and $J$ be as above and let $\mathbb{D}$ be a $J$-dense subcategory.  Then there is a restriction functor $Sh_{J}(\mathcal{E}^{\mathbb{C}^{op}})\rightarrow Sh_{J_{\mathbb{D}}}(\mathcal{E}^{\mathbb{D}^{op}})$ which is one half of an equivalence of categories.

\end{lemma}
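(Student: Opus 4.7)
The plan is to construct mutually quasi-inverse functors $r^{*}: Sh_{J}(\mathcal{E}^{\mathbb{C}^{op}}) \to Sh_{J_{\mathbb{D}}}(\mathcal{E}^{\mathbb{D}^{op}})$ and $r_{*}$ in the other direction, everything formulated in the internal logic of $\mathcal{E}$. I would begin by taking $r^{*}$ to be restriction along the inclusion $i: \mathbb{D} \hookrightarrow \mathbb{C}$, viewed as precomposition on presheaves. To show $r^{*}$ lands in $J_{\mathbb{D}}$-sheaves, observe that a $J_{\mathbb{D}}$-covering sieve on $d \in \mathbb{D}$ has the form $R \cap \mathbb{D}$ for some $R \in J(d)$; density condition 2 applied to $\mathrm{id}_{d}$ refines $R$ to morphisms factoring through $\mathbb{D}$, so the $J$-sheaf condition on $F \in Sh_{J}(\mathcal{E}^{\mathbb{C}^{op}})$ against such a refinement specializes precisely to the $J_{\mathbb{D}}$-sheaf condition on $r^{*}(F)$.

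For the reverse direction $r_{*}$, given a $J_{\mathbb{D}}$-sheaf $G$ on $\mathbb{D}$, the idea is to define $r_{*}(G)(c)$ as the object of compatible matching families on any $J$-covering sieve $R \in J(c)$ generated by morphisms from objects of $\mathbb{D}$, which exists by density condition 1. Concretely, this is a limit in $\mathcal{E}$ indexed by the internal diagram of such morphisms, together with the composition morphisms inside $\mathbb{D}$ supplied by density condition 2. The construction is independent of the choice of covering sieve because any two such sieves have a common refinement in $J(c)$, again using density condition 2, and $G$ being a $J_{\mathbb{D}}$-sheaf forces the two candidate limits to agree. Functoriality in $c$ uses density condition 2 once more: for a morphism $g: c \to c'$ in $\mathbb{C}$ and a covering of $c'$ from $\mathbb{D}$, one refines the pullback cover of $c$ to one from $\mathbb{D}$, giving a canonical restriction map $r_{*}(G)(c') \to r_{*}(G)(c)$.

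Next I would verify that $r_{*}(G)$ is a $J$-sheaf. The essential observation is that any $J$-cover of $c$ can be refined (by density condition 2 applied to the generators of a $\mathbb{D}$-cover of $c$) to a cover factoring through $\mathbb{D}$, at which point the sheaf condition for $r_{*}(G)$ reduces to the defining limit together with the sheaf condition for $G$. Finally, the unit $G \to r^{*}r_{*}(G)$ is an isomorphism on $d \in \mathbb{D}$ because the maximal sieve of $d$ inside $\mathbb{D}$ is $J$-covering on $d$ and $G$ is already a sheaf on it; the counit $r_{*}r^{*}(F) \to F$ is an isomorphism at $c \in \mathbb{C}$ because $F$ is a $J$-sheaf evaluated against a $\mathbb{D}$-generated cover of $c$, which is exactly the defining limit of $r_{*}r^{*}(F)(c)$.

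The main obstacle will be bookkeeping: the entire construction must be internalized in $\mathcal{E}$, and the ``choice'' of covering sieve in defining $r_{*}(G)(c)$ has to be replaced by a diagram over the full poset of $\mathbb{D}$-generated $J$-covers of $c$, or else by a colimit-over-refinements formulation, so that no external choices are made. The heart of the argument is that density conditions 1 and 2 together guarantee both enough $\mathbb{D}$-covers to define $r_{*}$ and enough refinements of arbitrary $J$-covers by $\mathbb{D}$-covers to prove the sheaf condition and the unit/counit isomorphisms; once that interplay is set up carefully, each individual verification is a short diagram chase in the internal logic along the lines of \cite{Johns77}.
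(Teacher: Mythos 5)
The paper does not actually prove this lemma; its ``proof'' is a one-line citation to the \emph{Lemme de comparaison} of \cite{ArGrV63}. Your outline reconstructs, in substance, the standard argument from that source (see also \cite{Johns77}): restriction $r^{*}$ along the inclusion, a quasi-inverse $r_{*}$ built from matching families over $\mathbb{D}$-generated covers, density condition 1 supplying such covers, and density condition 2 supplying the refinements needed for functoriality, the sheaf condition, and the unit/counit isomorphisms. So you are not taking a different route from the reference the paper leans on, and the skeleton is sound. Two points deserve more care than your sketch gives them. First, the independence of $r_{*}(G)(c)$ from the chosen $\mathbb{D}$-generated covering sieve: the intersection of two such sieves is $J$-covering but is not obviously generated by morphisms from $\mathbb{D}$, so the ``common refinement'' step as stated does not quite work; the repair is the one you yourself propose at the end, namely to define $r_{*}(G)(c)$ once and for all as a limit over the full internal diagram of $\mathbb{D}$-morphisms into $c$ (or over the poset of all $\mathbb{D}$-generated covers), and then to check that each individual cover computes the same object via the $J_{\mathbb{D}}$-sheaf condition on $G$. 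Second, because the definition of $J$-dense in this paper does not require $\mathbb{D}$ to be a full subcategory, the compatibility condition on a matching family must be imposed only along morphisms lying in $\mathbb{D}$, and density condition 2 is exactly what guarantees there are enough of these for the limit to be correct; this is where fullness would otherwise be silently used, and it is worth making explicit. With those two adjustments, and the internalization you already flag, the verifications go through as you describe.
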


\begin{proof}
 
See \cite{ArGrV63} ``Lemme de Comparaison''.

\end{proof}

We're now ready to prove our main theorem of this chapter.

\begin{theorem}

If B is inhabited, the classifying topos of $\mathbb{S}$ is non-degenerate, and the geometric morphism $\mathcal{E}[\mathbb{S}]\rightarrow \mathcal{E}$ is surjective. 

\end{theorem}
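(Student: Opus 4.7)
My plan is to prove the two claims separately: establish non-degeneracy by exhibiting an ``inconsistency'' set $Z \subseteq K(B \times A)$ which is closed under the direct covering relation but does not contain the empty K-finite set, and then deduce surjectivity by base change, reducing to non-degeneracy in each slice $\mathcal{E}/E$ for $E \ne 0$. For non-degeneracy, by the observation at the end of Section~\ref{Classifying Topoi for Internal Propositional Theories}, producing such a $Z$ suffices. I take $Z$ to consist of those K-finite $x \subseteq B \times A$ that fail to be partial sections of $p$, i.e., either $x$ contains $(b, a), (b, a')$ with $a \ne a'$, or $x$ contains some $(b, a)$ with $p(a) \ne b$. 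Clearly $\emptyset \notin Z$, and closure under direct covering splits into three cases handled via Booleanness. For covers induced by the second or third axiom schemes of $\mathbb{S}$, a Boolean case split on the relevant equality either yields $x \in Z$ via the implication or exhibits $x$ as inconsistent through the witnessing pair. The substantive case is the cover from axiom~(1), where we have $\exists b \in B\, \forall a \in A\, (x \cup \{(b,a)\} \in Z)$: assuming $x \notin Z$ for contradiction, I decide via Booleanness whether $b \in \mathrm{dom}(x)$; if $(b, a') \in x$ then $x \cup \{(b, a')\} = x$ is in $Z$ by the hypothesis, contradicting $x \notin Z$, and otherwise surjectivity of $p$ produces $a_0$ with $p(a_0) = b$, making $x \cup \{(b, a_0)\}$ a partial section and so not in $Z$, again a contradiction.

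For surjectivity, I use the standard characterization that a geometric morphism between topoi is surjective iff its inverse image reflects the initial object. Given $E \ne 0$ in $\mathcal{E}$, pass to the slice $\mathcal{E}/E$. Base change of classifying topoi along the canonical geometric morphism $\mathcal{E}/E \to \mathcal{E}$ yields an equivalence $\mathcal{E}[\mathbb{S}]/\pi^{*}(E) \simeq (\mathcal{E}/E)[\mathbb{S}_{E}]$, where $\mathbb{S}_{E}$ is the pulled-back propositional theory classifying sections of $p_{E}: A \times E \to B \times E$. In $\mathcal{E}/E$ all hypotheses of the non-degeneracy argument persist: slicing preserves Booleanness, $p_{E}$ is epic as a pullback of an epic, and $B \times E \to E$ is inhabited since $B \to 1$ being epic (or admitting a global section) is preserved by pullback. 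Thus the non-degeneracy argument applied inside $\mathcal{E}/E$ shows that $(\mathcal{E}/E)[\mathbb{S}_{E}]$, and hence $\mathcal{E}[\mathbb{S}]/\pi^{*}(E)$, is non-degenerate, equivalently $\pi^{*}(E) \ne 0$.

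The main obstacle is the closure of $Z$ under direct covers of type~(1): this is the only place where Booleanness and surjectivity of $p$ are both required, and it is also exactly where the inhabitation hypothesis on $B$ enters. The covers of types~(2) and~(3) are essentially tautologies given Booleanness, and the surjectivity half of the theorem is then a formal consequence of base change and non-degeneracy applied fiberwise.
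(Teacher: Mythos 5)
Your argument is essentially correct, but it takes a genuinely different route from the paper. The paper applies the Comparison Lemma three times to replace the site $K(B\times A)$ by the full subposet of K-finite partial sections of $p$, on which every covering sieve is principal (hence inhabited); non-degeneracy and surjectivity then both fall out of the structure of the reduced site. You instead work with the deduction-theoretic criterion from the end of Section \ref{Classifying Topoi for Internal Propositional Theories}: your $Z$ (the complement, in the Boolean topos $\mathcal{E}$, of the set of K-finite partial sections) contains everything it directly covers but not $\emptyset$, which shows that the truth value of ``$\emptyset\in\bar{\emptyset}$'' is $\bot$ and hence that $\mathcal{E}[\mathbb{S}]$ is non-degenerate. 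The case analysis is sound: cases (2) and (3) are handled by complemented equality, and in case (1) the internal surjectivity of the epimorphism $p$ supplies the extension that contradicts $x\notin Z$. This is exactly the technique the paper uses elsewhere (e.g.\ for NTI in Chapter VI), and it is arguably more self-contained here, since it avoids the Comparison Lemma entirely; what it costs you is that surjectivity no longer comes for free from the site, forcing the base-change argument. That argument is also fine in outline: $\mathcal{E}[\mathbb{S}]/\pi^{*}(E)\simeq(\mathcal{E}/E)[\mathbb{S}_{E}]$, the hypotheses (Booleanness, $p$ epic, $B$ inhabited) all localize, so $\pi^{*}(E)\neq 0$ whenever $E\neq 0$. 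One small remark: your non-degeneracy argument never actually uses that $B$ is inhabited (the element $b$ in case (1) comes from the direct-covering hypothesis itself), so the inhabitation hypothesis is carried only by the paper's proof, not by yours; this is not an error.

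There is, however, one misstatement you must repair: it is \emph{not} true in general that a geometric morphism is surjective if and only if its inverse image reflects the initial object. Reflecting $0$ is strictly weaker than faithfulness; any proper dense inclusion, such as $Sh_{\neg\neg}(\mathcal{F})\rightarrow\mathcal{F}$ for $\mathcal{F}$ non-Boolean, has an inverse image that reflects $0$ but is not a surjection. Your argument survives only because $\mathcal{E}$ is Boolean, and you should say so explicitly: given $g,h:X\rightrightarrows Y$ in $\mathcal{E}$ with $\pi^{*}(g)=\pi^{*}(h)$, let $E\rightarrowtail X$ be their equalizer and $E'$ its complement; since $\pi^{*}$ is left exact and cocontinuous it preserves equalizers and complements, so $\pi^{*}(E')\cong 0$, whence $E'\cong 0$ by your reflection-of-$0$ claim, so $E=X$ and $g=h$. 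Thus $\pi^{*}$ is faithful and $\pi$ is surjective. With that one-line bridge inserted, the proof is complete.
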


\begin{proof}

We will use the Comparison Lemma to show that the site for $\mathcal{E}[\mathbb{S}]$ as described above is equivalent to a subsite which is inhabited and in which every covering sieve is principal.  In particular, every covering sieve is inhabited, which implies surjectivity.\bigskip

First we note that, since the first axiom scheme tells us that, for each $b\in B$, $\emptyset$ is covered by the sieve generated by the family of singletons $\{(b,a)|a\in A\}$ which is inhabited since $B$ is, and since every K-finite set is either empty or inhabited, every element of $K(B,A)$ is covered by inhabited sets.  Therefore, by the comparison lemma, we may reduce to the subsite consisting of the inhabited K-finite subsets with the induced topology.\bigskip

Next we note that the second axiom scheme tells us that if $x\in K(B,A)$ is inhabited, then for all $b\in B$ $x$ is covered by the sieve $S(x,b,a,a')=\{f|codom(f)=x$, $(b,a),(b,a')\in x$, and $a=a'\}$.  Now since $x$ is K-finite so is its square, and there are K-finitely members of $A$ such that $(b,a)\in x$, and the intersection of K-finitely many covering sieves is a covering sieve, we have that $S(x,b)=\bigcap_{(b,a),(b,a')\in x}S(x,b,a,a')=\{f|codom(f)=x$ and all the members of $x$ with first projection $b$ are equal$\}$ is a covering sieve.  Again since $x$ is K-finite, there are K-finitely many $b\in B$ in the first projection of $x$, hence the sieve $S(x)=\bigcap_{b\in \pi_{1}(x) }S(x,b)=\{f|codom(f)=x$ and all the members of $x$ with the same first projection are equal$\}$ is also a cover.  Note that we can shorten the description of $S(x)$ to $\{f|codom(f)=x$ and $x$ is a finite partial function from $B$ to $A\}$.  Now we can use the comparison lemma to reduce to the subsite of $K(B,A)$ consisting of those members which are finite partial functions:  If there is anything in $S(x)$ at all, then $x$ is a function and $S(x)$ is the principal sieve on $x$, hence every member of $S(x)$ factors through the identity on $x$.\bigskip

Finally, we note that the third axiom scheme tells us that if $x\in K(B,A)$ is a K-finite partial function, then for all $b\in B$ $x$ is covered by the sieve $S'(x,b,a)=\{f|codom(f)=x$ and $(b,a)\in x$ and $p(a)=b\}$.  As above, we obtain a covering sieve $S'(x)=\bigcap_{(b,a)\in x}S(x,b,a)=\{f|codom(f)=x$ and $p(a)=b$ for all $(b,a)\in x \}$.  We can simplify this last description to $\{f|codom(f)=x$ and $x$ is a K-finite partial section of $p\}$.  Now we can use the comparison lemma to reduce to the subsite of K-finite partial sections of $p$:  If there is anything in $S'(x)$ then $x$ is a K-finite partial section, and $S(x)$ is the principal sieve on $x$, hence every member of $S(x)$ factors through the identity on $x$.\bigskip

We have now shown that the site of definition for $\mathcal{E}[\mathbb{S}]$ is equivalent, via the comparison lemma, to the subsite consisting of those members of $K(B,A)$ which are K-finite partial sections of $p$.  Since $\mathcal{E}$ is Boolean, every K-finite subset of $B$ has a section of (the approprtiate restriction of) $p$ over it, hence our equivalent site is inhabited since $B$ is.  Now it is evident that the covers induced by the second and third axiom schemes yield principal sieves when restricted to this subsite, so it remains to show that the first axiom scheme gives covers which restrict to inhabited covers in our subsite.  But this is obvious:  If $x$ is a K-finite partial section of $p$, then the pullback of the cover of $\emptyset$ by singletons with first projection $b$ to $x$ restricted to our new site consists precisely of the set of K-finite partial sections of $p$ over $\pi_{1}(x)\cup \{b\}$ extending $x$, which is certainly inhabited.    
 
\end{proof}

Even if $\mathcal{E}$ is not Boolean, $f^{*}(p)$ splits in $\mathcal{E}[\mathbb{S}]$, but the geometric morphism $\mathcal{E}[\mathbb{S}]\rightarrow\mathcal{E}$ need not be surjective:  It is not generally internally true in a non-Boolean topos that epimorphisms with K-finite codomain split, so the covers described above need not be inhabited.
\chapter{Geometric Dedekind Finiteness}
\label{chap5}
\section{Dedekind Finite Objects in Topoi}
\label{Dedekind Finite Objects in Topoi}

\begin{definition}

An object $A$ in a topos $\mathcal{E}$ is said to be \textit{Dedekind finite} if it is internally true that every monomorphism $m:A\rightarrow A$ is an isomorphism. \demo

An object $A$ in a topos $\mathcal{E}$ is said to be \textit{Dedekind infinite} if it is internally true that there is a monomorphism $m:A\rightarrow A$ such that the complement of the image of $m$ is inhabited.\demo

\end{definition}

Dedekind finite objects in topoi have been studied in \cite{Stout87}.  If $A$ is not Dedekind finite then there is a monomorphism $m$ for which it is not internally true that $m$ is an isomorphism.  It need not be the case that such an $m$ misses a point, however, every topos $\mathcal{E}$ admits a surjective geometric morphism $f:\mathcal{B}\rightarrow\mathcal{E}$ from a Boolean topos \cite[4.5.23]{Johns02}, and since it is not internally true that in $\mathcal{E}$ that $m$ is an isomorphism, it is not internally true in $\mathcal{B}$ that $f^{*}(m)$ is, but then it is internally true in $\mathcal{B}$ that $m$ is not an isomorphism, and therefore the image of $f^{*}(m)$ is complemented in $f^{*}(A)$, that is, $f^{*}(m)$ misses a point and $f^{*}(A)$ is Dedekind infinite.  Thus a sufficient condition for $A$ to be Dedekind finite is that whenever $f:\mathcal{F}\rightarrow\mathcal{E}$ is a geometric morphism, if $f^{*}(m)$ misses a point, then $\mathcal{F}$ is degenerate.  This condition is not necessary, as we shall see in section 4.4 below; the pullback of a Dedekind finite object along a geometric morphism can be Dedekind infinite.  With this motivation in mind, we define what we shall call geometrically Dedekind finite objects in the next section. 

\section{Definition and Properties}
\label{Definition and Properties}

\begin{definition}
  
Let $A$ be an object in a topos $\mathcal{E}$.  Let $\mathcal{T}$($A$) be the geometric propositional theory whose sentence symbols are the members of $A\sqcup (A \times A)$ and whose axioms are: \bigskip

1.)  $\top\rightarrow\bigvee_{b\in A}(a,b)$  \hfill for every $a\in A$ \bigskip

2.)  $(a,b)\wedge(a,b')\rightarrow$\textlbrackdbl$b=b'$\textrbrackdbl \hfill for every $a,b,b'\in A$ \bigskip

3.)  $(a,b)\wedge(a',b)\rightarrow$\textlbrackdbl $a=a'$\textrbrackdbl \hfill for every $a,a',b\in A$ \bigskip

4.)  $b\wedge(a,b)\rightarrow\bot$ \hfill for every $a,b\in A$ \bigskip

5.)  $\top\rightarrow\bigvee_{b\in A}b$ \bigskip

\noindent where \textlbrackdbl$x=y$\textrbrackdbl \enskip is shorthand for for $\bigvee\{\top|x=y\}$.  
\end{definition}
\bigskip
A model of $\mathcal{T}$($A$) in $\mathcal{E}$ consists of a subobject $X$ of $A$ and a subobject $\alpha$ of $A \times A$.  Intuitively, axioms 1 and 2 assert that $\alpha$ is the graph of a function from $A$ to $A$, axiom 3 asserts that $\alpha$ is monic, and axioms 4 and 5 assert that $\alpha$ misses a point.  $X$ is the set of points missed by $\alpha$. 

\bigskip  

\begin{definition}
We say that an object $A$ in a topos $\mathcal{E}$ is \textit{geometrically Dedekind finite} (GDF for short) if, for every geometric morphism $f:\mathcal{F}\rightarrow\mathcal{E}$ such that $f^{*}(A)$ admits a model of $f^{*}(\mathcal{T}(A))$, $\mathcal{F}$ is degenerate.
\end{definition}
\bigskip
Like any geometric theory, $\mathcal{T}$($A$) has a classifying topos containing a universal model.  An object $A$ in a topos $\mathcal{E}$ is GDF if and only if the classifying topos for $\mathcal{T}(A)$ is degenerate. 
\bigskip
\begin{theorem}
Let $\mathcal{E}$ be a topos with a natural numbers object $\mathbb{N}$.  Then an object $A$ in $\mathcal{E}$ is GDF if and only if, for every geometric morphism $f:\mathcal{B}\rightarrow\mathcal{E}$ with $\mathcal{B}$ Boolean, if there exists a monomorphism $m:f^{*}(\mathbb{N})\rightarrow f^{*}(A)$ in $\mathcal{B}$, then $\mathcal{B}$ is degenerate.  
\end{theorem}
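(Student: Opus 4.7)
The plan is to translate between models of $\mathcal{T}(A)$ and monomorphisms from the natural numbers object in both directions, using slicing and Boolean covers for the harder implication.

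For the forward direction, assume $A$ is GDF. Given $f:\mathcal{B}\rightarrow\mathcal{E}$ with $\mathcal{B}$ Boolean and a monomorphism $m:f^{*}(\mathbb{N})\rightarrow f^{*}(A)$, I will construct a model of $\mathcal{T}(f^{*}(A))$ in $\mathcal{B}$, which forces $\mathcal{B}$ to be degenerate. Let $N\subseteq f^{*}(A)$ be the image of $m$. Since $\mathcal{B}$ is Boolean, $N$ is complemented, so I can define an endomorphism $\alpha:f^{*}(A)\rightarrow f^{*}(A)$ which acts as the transported successor $m\circ s\circ m^{-1}$ on $N$ and as the identity on $N^{c}$; this is a monomorphism whose image is $f^{*}(A)\setminus\{m(0)\}$. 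Taking $X=\{m(0)\}$ gives an inhabited subobject disjoint from $\mathrm{im}(\alpha)$, and $(X,\mathrm{graph}(\alpha))$ is a model of $\mathcal{T}(f^{*}(A))\cong f^{*}(\mathcal{T}(A))$ in $\mathcal{B}$.

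For the reverse direction, assume the pullback-of-$\mathbb{N}$ condition and suppose towards contradiction that $A$ is not GDF. Then there is a geometric morphism $f:\mathcal{F}\rightarrow\mathcal{E}$ with $\mathcal{F}$ non-degenerate carrying a model $(X,\alpha)$ of $\mathcal{T}(f^{*}(A))$: an inhabited $X\subseteq f^{*}(A)$ and a monomorphism $\alpha:f^{*}(A)\rightarrow f^{*}(A)$ with $X\cap\mathrm{im}(\alpha)=0$ internally. To promote $X$ to a genuine global section, I pass to the slice $\mathcal{F}/X$, in which the diagonal is a global element of the pullback of $X$; since $X$ is inhabited, $\mathcal{F}/X$ remains non-degenerate. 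Composing with a surjective geometric morphism from a Boolean topos $\mathcal{B}\rightarrow\mathcal{F}/X$, which exists by \cite[4.5.23]{Johns02}, produces a geometric morphism $g:\mathcal{B}\rightarrow\mathcal{E}$ with $\mathcal{B}$ Boolean and non-degenerate, equipped with a global point $x_{0}:1\rightarrow g^{*}(A)$ lying in $X$ and therefore missed by $\alpha$. Primitive recursion on the natural numbers object then defines $h:g^{*}(\mathbb{N})\rightarrow g^{*}(A)$ by $h(0)=x_{0}$ and $h(n+1)=\alpha(h(n))$. I would verify that $h$ is monic by internal induction on $n$, proving $\forall m\in g^{*}(\mathbb{N})(h(n)=h(m)\rightarrow n=m)$: the base case $n=0$ (with a nested induction on $m$) and the subcase $h(n+1)=h(0)$ both force an equation of the form $x_{0}=\alpha(-)$, placing $x_{0}$ in $X\cap\mathrm{im}(\alpha)=0$ and hence yielding $\bot$, while the remaining successor--successor subcase follows from the monicity of $\alpha$ together with the inductive hypothesis. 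The resulting monomorphism $g^{*}(\mathbb{N})\rightarrow g^{*}(A)$ in the non-degenerate Boolean topos $\mathcal{B}$ contradicts the hypothesis.

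The main obstacle is the reverse direction, particularly the intuitionistic verification that $h$ is monic: every ``$\bot$ implies anything'' step must be justified as a subobject computation using axiom~4 of $\mathcal{T}(A)$, and the case analysis must track which side of each equation lies in $X$ versus $\mathrm{im}(\alpha)$. A routine preliminary check is that $f^{*}(\mathcal{T}(A))\cong\mathcal{T}(f^{*}(A))$, which is immediate because $\mathcal{T}(-)$ is built from $A$ using $\sqcup$, $\times$ and explicit axiom schemes, all preserved by inverse image functors; similarly, we use that inverse images preserve the natural numbers object so that the recursion defining $h$ is legitimate in $\mathcal{B}$.
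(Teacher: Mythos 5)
Your proof is correct and follows essentially the same route as the paper: the forward direction (transporting the successor onto the image of $m$ and taking $X=\{m(0)\}$ as the missed point) is identical, and the reverse direction likewise reduces to exhibiting a monomorphism from the pulled-back natural numbers object in a non-degenerate Boolean cover. The only difference is that where the paper simply cites Johnstone (5.1.1) for the fact that a model of $\mathcal{T}(A)$ forces $f^{*}(A)$ to contain a natural numbers object as a subobject already in $\mathcal{F}$, and only then passes to a Boolean surjection, you unfold that lemma by hand --- slicing over the inhabited $X$ to obtain a global point, Booleanizing, and then running the primitive recursion together with the Peano-style monicity induction --- which is a legitimate, self-contained substitute for the citation.
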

\begin{proof}
Let $A$ be GDF and let $f:\mathcal{B}\rightarrow\mathcal{E}$ be a geometric morphism with $\mathcal{B}$ Boolean.  Suppose there is a monomorphism $m:f^{*}(\mathbb{N})\rightarrow f^{*}(A)$ in $\mathcal{B}$.  Since $\mathcal{B}$ is Boolean, im($m$) is a complemented subobject of $f^{*}(A)$.  We define a monomorphism $\alpha : f^{*}(A)\rightarrow f^{*}(A)$ by setting $\alpha$ equal to the identity on complement of im($m$) and equal to $m\circ f^{*}(s)$ on im($m$), where $s:\mathbb{N}\rightarrow\mathbb{N}$ is the (monic) successor morphism on $\mathbb{N}$.  Then the graph of $\alpha$ together with the point $m\circ f^{*}(0)$ of $f^{*}(A)$ constitute a model of $f^{*}(\mathcal{T}(A))$, hence $\mathcal{B}$ is degenerate.

\bigskip

Conversely, suppose that $A$ satisfies the latter condition and let $f:\mathcal{F}\rightarrow\mathcal{E}$ be a geometric morphism such that $f^{*}(A)$ admits a model of $f^{*}(\mathcal{T}(A))$.  Then $f^{*}(A)$ contains a subobject which is a natural numbers object for $\mathcal{F}$ \cite[5.1.1]{Johns02}, hence there is a monomorphism $m:f^{*}(\mathbb{N})\rightarrow f^{*}(A)$ since $f^{*}(\mathbb{N})$ is an nno for $\mathcal{F}$.  See \cite[Lemma2.5.6]{Johns02}.  Let $p:\mathcal{B}\rightarrow\mathcal{F}$ be a surjection with $\mathcal{B}$ Boolean.  Then $p^{*}(m):p^{*}f^{*}(\mathbb{N})\rightarrow p^{*}f^{*}(A)$ is a monomorphism, hence $\mathcal{B}$ is degenerate.  Since $p^{*}$ reflects isomorphisms, $\mathcal{F}$ is degenerate.  The result follows since $f$ was arbitrary.     

\end{proof}

\bigskip

In fact, we can do better than this.  

\begin{theorem}
 
If $A$ is a GDF object in a Boolean topos, then $A$ is (internally) K-finite.

\end{theorem}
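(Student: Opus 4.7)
The plan is to prove the contrapositive via the Grothendieck deduction calculus of Chapter III. Since $A$ is GDF iff the classifying topos $\mathcal{E}[\mathcal{T}(A)]$ is degenerate, and this in turn is equivalent—by the deduction theorem for propositional theories in Chapter III—to the deducibility of $\top\to\bot$ from $\mathcal{T}(A)$ in the Grothendieck calculus, it will suffice to show: if $\top\to\bot$ is Grothendieck-deducible from $\mathcal{T}(A)$ in a Boolean topos $\mathcal{E}$, then $A\in K(A)$. I would fix an arbitrary $Y\subseteq P(A)$ containing $\emptyset$ and closed under adjoining singletons, and aim to deduce $A\in Y$; the arbitrariness of $Y$ then gives K-finiteness.

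Call a \emph{state} a pair $s=(\alpha,X)$ in which $\alpha$ is a K-finite injective partial endofunction of $A$ and $X$ is a K-finite subset of $A$ disjoint from $\mathrm{image}(\alpha)$; write $U(s)=\mathrm{image}(\alpha)\cup X$. Say $s$ \emph{satisfies} a K-finite set $p\subseteq A\sqcup(A\times A)$ of sentence symbols if its function-type members lie in $\mathrm{graph}(\alpha)$ and its missed-point-type members lie in $X$. Let $V_Y\subseteq Seq$ consist of those sequents $p\to\bigvee_{\gamma\in\Gamma}q_\gamma$ such that for every state $s$ satisfying $p$, either some extension $s'\supseteq s$ satisfies some $q_\gamma$, or $A\setminus U(s)\in Y$.

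The plan is then to verify (i) every axiom of $\mathcal{T}(A)$ lies in $V_Y$, and (ii) $V_Y$ is closed under the Grothendieck rules. For (i), axioms 2, 3, and 4 are vacuous, since functionality of $\alpha$, injectivity of $\alpha$, and disjointness $X\cap\mathrm{image}(\alpha)=\emptyset$ are all baked into the notion of state, so no state can satisfy their premises; axioms 1 and 5 are handled by extending $s$ with a fresh element drawn from $A\setminus U(s)$ (added to $\alpha$ in one case, to $X$ in the other), with $\emptyset\in Y$ covering the degenerate situation where no fresh element exists. For (ii), the identity and weakening rules are immediate; the cut rule is handled by composing the two state-extensions supplied by the hypotheses. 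Once $V_Y$ is shown to contain every Grothendieck-deducible sequent, one specializes to $\top\to\bot$ and the empty state $s=(\emptyset,\emptyset)$: this $s$ satisfies $\top$ vacuously and admits no extension satisfying $\bot$ (there being no disjuncts), so the $V_Y$-condition forces $A=A\setminus\emptyset\in Y$.

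The main obstacle, and the only essential use of the Boolean hypothesis, lies in the cut rule: if the first hypothesis produces an intermediate state $s'\supseteq s$ for which the second hypothesis only delivers $A\setminus U(s')\in Y$, one must promote this back to $A\setminus U(s)\in Y$. The required identity $A\setminus U(s)=(A\setminus U(s'))\cup(U(s')\setminus U(s))$, combined with K-finiteness of the second summand (the finitely many new elements added in passing from $s$ to $s'$) and closure of $Y$ under adjoining singletons, supplies the promotion; but the identity itself rests on decidability of membership in the K-finite subobject $U(s')\subseteq A$, i.e., on complementation of subobjects of $A$—exactly what Boolean-ness of $\mathcal{E}$ provides.
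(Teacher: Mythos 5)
Your proof is correct, but it takes a genuinely different route from the paper's. The paper argues by contradiction on the semantic side: assuming $A$ is not K-finite, it introduces the auxiliary propositional theory $\mathcal{T}(\mathbb{N},A)$ of monomorphisms $\mathbb{N}\rightarrow A$, uses the Comparison Lemma to cut its site down to the genuine K-finite partial monomorphisms, observes that the non-K-finiteness hypothesis makes every basic cover inhabited so that the classifying topos surjects onto the base, and then passes to $\neg\neg$-sheaves to land in a non-degenerate Boolean topos carrying a monomorphism from $\mathbb{N}$ into the pullback of $A$, contradicting GDF via the earlier characterization of GDF in terms of monomorphisms from $\mathbb{N}$ over Boolean topoi. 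You instead work on the syntactic side, through the Chapter III equivalence between degeneracy of the classifying topos of a propositional theory and Grothendieck-deducibility of $\top\rightarrow\bot$: you exhibit a deductively closed invariant $V_{Y}$, parametrized by an arbitrary inductive family $Y$, whose membership for $\top\rightarrow\bot$ forces $A\in Y$ directly. Your states are of course exactly the objects of the reduced site the paper would associate to $\mathcal{T}(A)$, so the combinatorial content is the same; what your version buys is self-containment --- no auxiliary theory, no Comparison Lemma, no surjectivity argument, no $\neg\neg$-sheafification, and no appeal to the $\mathbb{N}$-characterization --- together with an explicit accounting of where Booleanness enters (the case splits on membership in $\mathrm{dom}(\alpha)$ and on inhabitedness of $A\setminus U(s)$, and the decomposition $A\setminus U(s)=(A\setminus U(s'))\cup(U(s')\setminus U(s))$ with $U(s')\setminus U(s)$ a complemented, hence K-finite, subobject of the K-finite $U(s')$, so that closure of $Y$ under adjoining singletons propagates by the defining induction of $K(A)$). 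The price is that the closure of $V_{Y}$ under the three rules must be verified internally with $Y$ as a parameter, in the style of the $(\Leftarrow)$ direction of the Chapter III theorem. One small correction to your step (i): axioms 2 and 3 of $\mathcal{T}(A)$ are not literally vacuous, since a state can satisfy the premise $\{(a,b),(a,b')\}$ when $b=b'$; but in that case the conclusion $\bigvee\{\top\mid b=b'\}$ contains the disjunct $\top$, which the state itself satisfies, so these axioms still lie in $V_{Y}$ --- only axiom 4 has a genuinely unsatisfiable premise.
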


\begin{proof}

Suppose $A$ is a GDF object in a Boolean topos $\mathcal{B}$ that is not K-finite.  Then if $p$ is a K-finite subset of $A$, the complement of $p$ is inhabited.  Let $\mathcal{T}(\mathbb{N},A)$ be the internal propositional theory whose sentence symbols are the members of $\mathbb{N}\times A$ and whose axioms are:

1.)  $(n,a)\wedge(n,a')\rightarrow$\textlbrackdbl$a=a'$\textrbrackdbl \hfill for every $n\in\mathbb{N}$, $a,a'\in A$

2.)  $(n,a)\wedge(n',a)\rightarrow$\textlbrackdbl$n=n'$\textrbrackdbl \hfill for every $n,n'\in\mathbb{N}$, $a\in A$ 

3.)  $\top\rightarrow\bigvee_{a\in A}(n,a)$ \hfill for every $n\in\mathbb{N}$

Intuitively, models of $\mathcal{T}(\mathbb{N},A)$ are monomorphisms from $\mathbb{N}$ to $A$.  Similarly to the construction of splittings for epimorphisms, we apply the comparison lemma to find that the classifying topos for $\mathcal{T}(\mathbb{N},A)$ is equivalent to the topos of sheaves on the site whose underlying category consists of (internally) genuine K-finite parital monomorphisms from $\mathbb{N}$ to $A$ and whose basic covers of each K-finite partial $p$ are given by, for each $n\in\mathbb{N}$ not in the domain of $p$, the sieve generated by the extensions of $p$ to $n$.  Such extensions always exist, since the complement of the image of $p$ is inhabited by hypothesis.  It is clear that no new covers are obtained by pullback, and that iteration will not produce any empty covers since none of the basic covers are empty.  Therefore the classifying topos $\mathcal{E}(\mathcal{T}(\mathbb{N},A))$ surjects onto $\mathcal{E}$ and one easily checks that the universal model is (the graph of) a total monomorphism into the pullback of $A$.  Passing to double negation sheaves on the classifying topos, we arrive at a Boolean topos with a monomorphism from the natural numbers into the pullback of $A$, so double negation sheaves on the classifying topos, and therefore the classifying topos itself, must be degenerate.  (Since $\neg\neg0=0$ in any Heyting algebra, $\neg\neg$ is the trivial operator iff $0=1$.)  But then $\mathcal{E}$ is degenerate.  

\end{proof}

\begin{definition}

An object $A$ in a topos $\mathcal{E}$ is called \textit{geometrically finite} if whenever $f:\mathcal{B}\rightarrow\mathcal{E}$ is a geometric morphism with $\mathcal{B}$ Boolean, $f^{*}(A)$ is K-finite.  Geometric finiteness was first formulated in \cite{Freyd06}.\demo 

\end{definition}

\begin{corollary}

An object $A$ in a topos $\mathcal{E}$ is GDF if an only if it is geometrically finite. In particular, every K-finite, \~{K}-finite, and R-finite object is GDF.  See \cite{Freyd06} for the definitions of the latter notions. 

\end{corollary}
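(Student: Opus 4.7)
The plan is to derive the equivalence by combining the two preceding theorems of this section, and then to use preservation of K-finiteness by inverse image for the ``in particular'' clause. The pivotal observation, essentially immediate from the definition of GDF, is that the GDF condition is stable under inverse image: if $A$ is GDF in $\mathcal{E}$ and $f:\mathcal{F}\rightarrow\mathcal{E}$ is a geometric morphism, then for any further $g:\mathcal{G}\rightarrow\mathcal{F}$, a model of $\mathcal{T}(g^{*}f^{*}A)\cong (fg)^{*}\mathcal{T}(A)$ in $\mathcal{G}$ is witnessed via the composite $fg$, and therefore forces $\mathcal{G}$ to be degenerate. Combined with the theorem that a GDF object in a Boolean topos is K-finite, this yields the only-if direction at once: for any Boolean $f:\mathcal{B}\rightarrow\mathcal{E}$, the pulled-back object $f^{*}(A)$ is GDF in $\mathcal{B}$, hence K-finite.

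For the converse, I would invoke the earlier characterization of GDF in terms of monomorphisms from $\mathbb{N}$ into Boolean pullbacks. Given any Boolean $f:\mathcal{B}\rightarrow\mathcal{E}$ and any monomorphism $m:f^{*}(\mathbb{N})\rightarrow f^{*}(A)$, geometric finiteness of $A$ makes $f^{*}(A)$ K-finite in $\mathcal{B}$. Since $\mathcal{B}$ is Boolean, K-finiteness there coincides with cardinal-finiteness (see \cite[D5.4]{Johns02}) and is inherited by subobjects, so the image of $m$, and hence $f^{*}(\mathbb{N})$ itself, is K-finite. But $f^{*}(\mathbb{N})$ is the natural numbers object of $\mathcal{B}$, and in any non-degenerate topos the successor morphism makes $\mathbb{N}$ Dedekind infinite; in a Boolean topos this contradicts K-finiteness, since K-finite objects there are Dedekind finite. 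Thus $\mathcal{B}$ must be degenerate, and the cited characterization gives that $A$ is GDF.

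For the final sentence, K-finiteness is preserved by inverse image functors (as noted in the examples of Chapter~2), so any K-finite object in $\mathcal{E}$ pulls back to a K-finite object in every Boolean topos over $\mathcal{E}$, hence is geometrically finite. The notions of $\tilde{K}$-finiteness and R-finiteness strengthen K-finiteness and are themselves preserved by inverse image, so the same argument applies: their pullbacks to any Boolean topos remain K-finite, so they are geometrically finite and therefore GDF by the equivalence just established. The one mildly delicate step in the whole argument is the inheritance of K-finiteness by subobjects in Boolean topoi; without Booleanness a K-finite object may have subobjects that are not K-finite, and the Dedekind-infinity contradiction used in the converse would not be available.
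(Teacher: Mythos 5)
Your proof of the equivalence is correct and is essentially the argument the paper intends: the observation that $f^{*}(\mathcal{T}(A))=\mathcal{T}(f^{*}(A))$ (so the GDF property is stable under inverse image, because the joins in the axioms of $\mathcal{T}(A)$ range over all of $A$ and are therefore preserved) combined with the theorem that GDF objects in Boolean topoi are K-finite gives one direction, while the characterization of GDF via monomorphisms $f^{*}(\mathbb{N})\rightarrow f^{*}(A)$ together with the fact that any monomorphism of a K-finite decidable object into itself is an isomorphism gives the other. One correction to your final paragraph: $\tilde{K}$-finiteness and R-finiteness are \emph{weakenings} of K-finiteness, not strengthenings (every K-finite object is $\tilde{K}$-finite, not conversely), so the ``in particular'' clause does not follow from preservation of K-finiteness alone; the conclusion still holds because a $\tilde{K}$-finite object pulls back to a subobject of a K-finite object, which in a Boolean topos is complemented and hence K-finite, and for R-finiteness one appeals directly to Freyd's result that R-finite objects are geometrically finite.
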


We now study some properties of GDF objects 

\begin{lemma}
Let $A$ and $B$ be objects in a Boolean topos $\mathcal{B}$ with natural numbers object $\mathbb{N}$.  If there is a monomorphism $m:\mathbb{N}\rightarrow A\times B$ then either there is a monomorphism from $\mathbb{N}$ to $A$ or there is a monomorphism from $\mathbb{N}$ to $B$.  
\end{lemma}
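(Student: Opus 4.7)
Work internally in the Boolean topos $\mathcal{B}$, where LEM holds and consequently every object has decidable equality; in particular, equality in $A$ is decidable. Writing $m = \langle f, g \rangle$ with $f : \mathbb{N} \to A$ and $g : \mathbb{N} \to B$, the fact that $m$ is mono says exactly that the restriction of $g$ to any fiber $f^{-1}(a) \subseteq \mathbb{N}$ is mono into $B$. The overall strategy is to use LEM to split into two cases according to whether the image of $f$ is ``unbounded'' or ``bounded''.

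By LEM, either (A) for every $n \in \mathbb{N}$ there is a $k \in \mathbb{N}$ with $f(k) \notin \{f(0), \dots, f(n)\}$, or (B) there exists $n$ with $\{f(k) : k \in \mathbb{N}\} \subseteq \{f(0), \dots, f(n)\}$. In case (A) I would construct a mono $\mathbb{N} \to A$ directly. Define $h : \mathbb{N} \to \mathbb{N}$ by primitive recursion, storing the values computed so far in a list $(h(0), \dots, h(n)) \in \mathbb{N}^{[n+1]}$: set $h(0) = 0$, and let $h(n+1)$ be the least $k$ such that $f(k) \neq f(h(i))$ for every $i \leq n$. This least element exists by (A), and the predicate is decidable since equality in $A$ and in $\mathbb{N}$ is decidable. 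The composite $f \circ h : \mathbb{N} \to A$ is then mono by construction.

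In case (B) I would use pigeonhole. Enumerate $\{f(0), \dots, f(n)\}$ as a list $a_0, \dots, a_m$ of pairwise distinct elements of $A$, which is possible by decidable equality on $A$. The fibers $S_i = f^{-1}(a_i)$ then partition $\mathbb{N}$, and since $\mathbb{N}$ is unbounded while a K-finite union of bounded subsets of $\mathbb{N}$ would be bounded, some $S_i$ must be unbounded. Taking the least such $i$ (well defined on the K-finite index set because the relevant predicate is decidable in $\mathcal{B}$), I construct a mono $j : \mathbb{N} \to S_i \subseteq \mathbb{N}$ by the ``least unseen element'' recursion $j(0) = \min S_i$, $j(n+1) = \min \{k \in S_i : k > j(n)\}$. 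Then $g \circ j : \mathbb{N} \to B$ is mono because $g$ restricted to $S_i = f^{-1}(a_i)$ is mono.

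The main obstacle is justifying the two recursions in the topos setting; the ``least element'' operator is well-defined for decidable inhabited subsets of $\mathbb{N}$ by the usual induction argument, and both recursions can be carried out by primitive recursion on the NNO, using the list object $\mathbb{N}^{[n]}$ to hold the previously chosen values. The resulting internal disjunction is exactly what the lemma asserts when interpreted in the Kripke--Joyal semantics of $\mathcal{B}$; note that an external reading of the statement would fail in Boolean topoi such as $\mathrm{Sets} \times \mathrm{Sets}$, so the internal interpretation is forced.
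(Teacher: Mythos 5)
Your proposal is correct, and its first half coincides with the paper's key device: the dichotomy, for a map out of $\mathbb{N}$, between ``one can keep finding fresh values'' (in which case a least-new-element recursion yields a mono after reindexing) and ``the image is contained in $\{f(0),\dots,f(n)\}$ for some $n$.'' Where you diverge is in the treatment of the bounded case. The paper applies the dichotomy symmetrically to \emph{both} projections $f_1:\mathbb{N}\to A$ and $f_2:\mathbb{N}\to B$ and observes that if both are bounded, the given mono would embed $\mathbb{N}$ into a decidable set of size $i_1 i_2$, which is absurd; so at least one projection must fall into the first alternative. You instead apply the dichotomy only to $f$, and in the bounded case run a pigeonhole argument: the finitely many fibers of $f$ partition $\mathbb{N}$, some fiber is unbounded, and on any fiber of $f$ the other coordinate $g$ is injective precisely because $\langle f,g\rangle$ is mono --- this last observation is the genuinely new ingredient, and it is not used in the paper. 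Both routes need essentially the same internal machinery (decidable equality from Booleanness, the least number principle for decidable predicates, course-of-values recursion via the list object), and both deliver the disjunction only internally, as you correctly note; the paper's version is slightly more economical in that the contradiction in the doubly-bounded case is immediate and no fiber analysis is needed, while yours is asymmetric but makes the role of monicity of the pairing more transparent. One small point worth making explicit if you write this up: in case (A) the existence of the required least $k$ at stage $n+1$ follows from hypothesis (A) applied at $N=\max_{i\le n} h(i)$, since $\{f(h(0)),\dots,f(h(n))\}\subseteq\{f(0),\dots,f(N)\}$; as stated, (A) quantifies over initial segments of $\mathbb{N}$ rather than over the finite lists your recursion actually produces.
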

\begin{proof}

First we show that, for any morphism $f:\mathbb{N}\rightarrow X$ to an object $X$ in $\mathcal{B}$, either there is a monomorphism $g:\mathbb{N}\rightarrow\mathbb{N}$ such that $f\circ g$ is a monomorphism, or there is a natural number $i$ such that for every $n\in\mathbb{N}$ there exists an $m<i$ such that $f(n)=f(m)$.  We attempt to inductively define such a $g$ by setting $g(0)=0$ and $g(n+1)=$ the least $k>g(n)$ such that $f(k)$ is different from $f(m)$ for all $m\leqq g(n)$.  The inductive process might fail at some stage becuase there might be no $k$ to serve as $g(n+1)$ and we have the second alternative; otherwise the induction succeeds, and provides the $g$ in the first alternative.

\bigskip

Now to prove the lemma, suppose we have a monomorphism from $\mathbb{N}$ to $A\times B$ and let $f_{1}:\mathbb{N}\rightarrow A$ and $f_{2}:\mathbb{N}\rightarrow B$ be its composites with the respective projections.  Apply the above to both of these morphisms.  If either one of them yields the first alternative, we have a monomorphism as desired.  Otherwise both of them yield the second alternative, say, with $i_{1}$ for $f_{1}$ and $i_{2}$ for $f_{2}$.  Then for every $n\in\mathbb{N}$ there are $m_{1}<i_{1}$ and $m_{2}<i_{2}$ such that $(f_{1}(n),f_{2}(n))=(f_{1}(m_{1}),f_{2}(m_{2}))$.  Then our monomorphism from $\mathbb{N}$ into $A\times B$ maps $\mathbb{N}$ into a finite set of size $i_{1}i_{2}$, which is absurd. 
\end{proof}

\bigskip

\begin{corollary}
In any topos $\mathcal{E}$ with a natural numbers object $\mathbb{N}$:

i.)  Any K-finite object is GDF.\newline\indent
ii.)  The product of GDF objects is GDF.\newline\indent
iii.)  The coproduct of GDF objects is GDF.\newline\indent
iv.)  A subobject of a GDF object is GDF.\newline\indent
v.)  A quotient of a GDf object is GDF.
\end{corollary}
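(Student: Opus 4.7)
The plan is to reduce each clause to the Boolean characterisation just established: $A$ is GDF iff every geometric morphism $f:\mathcal{B}\rightarrow\mathcal{E}$ from a Boolean topos admitting a monomorphism $f^{*}(\mathbb{N})\rightarrow f^{*}(A)$ in $\mathcal{B}$ must have $\mathcal{B}$ degenerate. Clause (i) is the earlier corollary for K-finite objects. Clause (iv) is immediate, as the composite of a mono $f^{*}(\mathbb{N})\rightarrow f^{*}(B)$ with $f^{*}(B)\hookrightarrow f^{*}(A)$ is a mono. For (ii), a mono $m:f^{*}(\mathbb{N})\rightarrow f^{*}(A)\times f^{*}(B)$ feeds directly into the preceding Lemma, yielding a mono from $f^{*}(\mathbb{N})$ into $f^{*}(A)$ or into $f^{*}(B)$, and GDF of the respective factor collapses $\mathcal{B}$.

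For (iii) I reduce to (ii). The coproduct injections give a monomorphism $f^{*}(A)\sqcup f^{*}(B)\hookrightarrow (f^{*}(A)\sqcup 1)\times(f^{*}(B)\sqcup 1)$ via $a\mapsto(a,*)$, $b\mapsto(*,b)$, so a mono $f^{*}(\mathbb{N})\rightarrow f^{*}(A)\sqcup f^{*}(B)$ composes into the product, and the Lemma yields a mono $h:f^{*}(\mathbb{N})\rightarrow f^{*}(A)\sqcup 1$ (say). In a Boolean topos this is refined to a mono $f^{*}(\mathbb{N})\rightarrow f^{*}(A)$: the preimage $U=h^{-1}(1)$ is a complemented subterminal subobject of $f^{*}(\mathbb{N})$ (subterminal by mono-ness of $h$, complemented by Boolean-ness), and one constructs a mono $\sigma:f^{*}(\mathbb{N})\rightarrow f^{*}(\mathbb{N})\setminus U$ by shifting past $U$ --- concretely, $\sigma(n)=n$ if no $m\leq n$ lies in $U$ and $\sigma(n)=n+1$ otherwise, which is well-defined because the guard is a decidable predicate in the Boolean topos. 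Post-composing with the restricted mono $f^{*}(\mathbb{N})\setminus U\hookrightarrow f^{*}(A)$ contradicts GDF of $A$.

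Clause (v) uses the Chapter IV splitting theorem. Given $e:A\rightarrow C$ epi with $A$ GDF and a mono $m:f^{*}(\mathbb{N})\rightarrow f^{*}(C)$ in Boolean $\mathcal{B}$, apply that theorem to the epi $f^{*}(e)$ in the Boolean topos $\mathcal{B}$ to obtain a surjective geometric morphism $g:\mathcal{B}'\rightarrow\mathcal{B}$ from a Boolean topos $\mathcal{B}'$ together with a section $\tau$ of $g^{*}f^{*}(e)$. Then $\tau\circ g^{*}(m):g^{*}f^{*}(\mathbb{N})\rightarrow g^{*}f^{*}(A)$ is a mono (composition of monos), so GDF of $A$ applied to the composite $f\circ g:\mathcal{B}'\rightarrow\mathcal{E}$ forces $\mathcal{B}'$ degenerate, and surjectivity of $g$ then forces $\mathcal{B}$ degenerate.

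The main obstacle is the small technical calculation in (iii) showing that a mono $f^{*}(\mathbb{N})\rightarrow X\sqcup 1$ in a Boolean topos with NNO yields a mono $f^{*}(\mathbb{N})\rightarrow X$; this uses Boolean-ness to make the relevant subterminal complemented and decidable, together with the NNO structure to shift past it. All other clauses follow rather directly from the characterisation theorem, the preceding Lemma, and the Chapter IV splitting result.
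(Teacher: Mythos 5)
Your clauses (i), (ii) and (iv) match the paper's proof essentially verbatim (the paper reproves (i) directly from the fact that a monomorphism from a K-finite decidable object to itself is an isomorphism, but citing the earlier corollary is equally legitimate). Clause (iii) is where you genuinely diverge: the paper pulls the coproduct decomposition back along $m$ to write $f^{*}(\mathbb{N})\cong\mathbb{N}_{A}\sqcup\mathbb{N}_{B}$, observes that one summand must be unbounded, and extracts a monomorphism from $\mathbb{N}$ into that summand; you instead embed $f^{*}(A)\sqcup f^{*}(B)$ into $(f^{*}(A)\sqcup 1)\times(f^{*}(B)\sqcup 1)$, invoke the product lemma, and then shift past the subterminal preimage of the extra point. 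Both are correct; the paper's route is shorter and avoids the added points entirely, while yours has the mild advantage of reusing the product lemma as the single combinatorial engine, at the cost of the auxiliary shifting construction (which is itself essentially the same ``one summand is unbounded'' argument in disguise, applied to the complemented subterminal $U$).

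In clause (v) there is one genuine gap: you assert that the splitting theorem produces a surjection $g:\mathcal{B}'\rightarrow\mathcal{B}$ \emph{from a Boolean topos} $\mathcal{B}'$, but the classifying topos $\mathcal{B}[\mathbb{S}]$ constructed in Chapter IV is not claimed to be Boolean, and you need Booleanness to invoke the characterisation of GDF via monomorphisms out of $\mathbb{N}$. The repair is the one the paper itself uses: pull $\tau\circ g^{*}(m)$ back further to $Sh_{\neg\neg}(\mathcal{B}')$, which is Boolean; GDF of $A$ then degenerates $Sh_{\neg\neg}(\mathcal{B}')$, density of the inclusion degenerates $\mathcal{B}'$, and surjectivity of $g$ degenerates $\mathcal{B}$. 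With that insertion your (v) is actually laid out more carefully than the paper's, since you correctly quantify over an arbitrary Boolean topos $\mathcal{B}$ over $\mathcal{E}$ before splitting the epimorphism, whereas the paper splits over $\mathcal{E}$ directly and leaves that quantification implicit.
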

\begin{proof}
i.)  Let $A$ be a K-finite object in $\mathcal{E}$ and let $f:\mathcal{B}\rightarrow\mathcal{E}$ be a geometric morphism with $\mathcal{B}$ Boolean.  Suppose that $m:f^{*}(\mathbb{N})\rightarrow f^{*}(A)$ is a monomorphism.  It is well known that K-finiteness is preserved by inverse images, hence $f^{*}(A)$ is K-finite.  Then $f^{*}(\mathbb{N})$ is a subobject of a K-finite decidable object, hence K-finite, and decidable since $\mathcal{B}$ is Boolean.  But any monomorphism from a K-finite decidable object to itself is an isomorphism.  See \cite{Ortega92}.  Thus $f^{*}(s)$ is an isomorphism, so that $\mathcal{B}$ is degenerate.

\bigskip

ii.)  Let $A$ and $B$ be GDF and let $f:\mathcal{B}\rightarrow\mathcal{E}$ be a geometric morphism with $\mathcal{B}$ Boolean.  Suppose there is a monomorphism $m:f^{*}(\mathbb{N})\rightarrow f^{*}(A\times B)\cong f^{*}(A)\times f^{*}(B)$.  Then, by lemma V.8, either there is a monomorphism from $f^{*}(\mathbb{N})$ to $f^{*}(A)$ or a monomorphism from $f^{*}(\mathbb{N})$ to $f^{*}(B)$.  In either case $\mathcal{B}$ is degenerate, since $A$ and $B$ are GDF.

\bigskip

iii.)  Let $A$ and $B$ be GDF and let $f:\mathcal{B}\rightarrow\mathcal{E}$ be a geometric morphism with $\mathcal{B}$ Boolean.  Suppose there is a monomorphism $m:f^{*}(\mathbb{N})\rightarrow f^{*}(A\sqcup B)\cong f^{*}(A)\sqcup f^{*}(B)$.  Let $\mathbb{N}_{A}$ and $\mathbb{N}_{B}$ be the pullbacks along $m$ of the respective coproduct inclusions.  Then $f^{*}(\mathbb{N})\cong \mathbb{N}_{A}\sqcup\mathbb{N}_{B}$, hence one of $\mathbb{N}_{A}$ and $\mathbb{N}_{B}$ must be unbounded.  Thus there is either a monomorphism from $\mathbb{N}$ to $\mathbb{N}_{A}$ or a monomorphism from $\mathbb{N}$ to $\mathbb{N}_{B}$, therefore either a monomorphism from $\mathbb{N}$ to $f^{*}(A)$ or a monomorphism from $\mathbb{N}$ to $f^{*}(B)$.  It follows that $\mathcal{B}$ is degenerate since $A$ and $B$ are GDF.

\bigskip  

iv.)  Let $A'\rightarrowtail A$ be a subobject of a GDF object $A$ and let $f:\mathcal{B}\rightarrow\mathcal{E}$ be a geometric morphism with $\mathcal{B}$ Boolean.  Then $f^{*}(A')$ is a subobject of $f^{*}(A)$ and a monomorphism $m:f^{*}(\mathbb{N})\rightarrow f^{*}(A')$ yields, by composition with the inclusion, a monomorphism from $f^{*}(\mathbb{N})$ to $f^{*}(A)$, hence $\mathcal{B}$ is degenerate since $A$ is GDF. 

\bigskip

v.)  Let $p:A\rightarrow B$ be an epimorphism with $A$ GDF and let $f:\mathcal{F}\rightarrow\mathcal{E}$ be a surjective geometric morphism such that $f^{*}(p)$ has a splitting $s$ in $\mathcal{F}$, which exists by theorem III.1.  If $m:\mathbb{N}\rightarrow f^{*}(B)$ is a monomorphism, so is $s\circ m:\mathbb{N}\rightarrow f^{*}(A)$.  Pulling $s\circ m$ back to $Sh_{\neg\neg}(\mathcal{F})$ we find that $Sh_{\neg\neg}(\mathcal{F})$ is degenerate.  Since the inclusion of $Sh_{\neg\neg}(\mathcal{F})$ in $\mathcal{F}$ is dense, $\mathcal{F}$ is degenerate.  But $f$ is surjective, so $\mathcal{E}$ is degenerate.  

\end{proof}

\section{GDF is not a Geometric Theory}
\label{GDF is not a Geometric Theory}

In this section we will show that GDF is not Morita equivalent to a geometric theory.  We will use the following theorem from \cite{Caram11}:

\begin{theorem}
(Caramello)  Let $\Sigma$ be a signature and $\mathcal{S}$ a class of $\Sigma$-structures in Grothendieck toposes closed under isomorphisms of structures.  Then $\mathcal{S}$ is the class of all models in Grothendieck toposes of a geometric theory over $\Sigma$ if and only if it satisfies the following two conditions: \bigskip

i.)  For any geometric morphism $f:\mathcal{F}\rightarrow\mathcal{E}$, if $M\in \Sigma$-\textbf{str}$(\mathcal{E})$ is in $\mathcal{S}$ then $f^{*}(M)$ is in $\mathcal{S}$. \bigskip

ii.)  For any (set indexed) jointly surjective family $\{f_{i}:\mathcal{E}_{i}\rightarrow\mathcal{E}|i\in I\}$ of geometric morhpisms (that is, if $m$ is a morphism of $\mathcal{E}$ such that for all $i\in I$ $f_{i}^{*}(m)$ is an isomorphism then $m$ is) and any $\Sigma$-structure $M$ in $\mathcal{E}$, if $f_{i}^{*}(M)$ is in $\mathcal{S}$ for every $i\in I$ then $M$ is in $\mathcal{S}$.

\end{theorem}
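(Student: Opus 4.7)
The plan is to prove the easy direction first and then use classifying topoi to handle the converse, mimicking in the external (Grothendieck) setting the ideas already exploited earlier in this chapter.

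\emph{Necessity of (i) and (ii).} Suppose $\mathcal{S}$ is the class of models in Grothendieck toposes of a geometric theory $\mathbb{T}$ over $\Sigma$. For (i), since the inverse image $f^{*}$ of a geometric morphism preserves finite limits, images, and arbitrary unions of subobjects, it commutes with the interpretations of geometric formulas; hence for each sequent $\phi\vdash\psi$ of $\mathbb{T}$ we have $f^{*}(\phi^{M})=\phi^{f^{*}(M)}$ and $f^{*}(\psi^{M})=\psi^{f^{*}(M)}$, so validity transfers. For (ii), suppose $\{f_{i}:\mathcal{E}_{i}\rightarrow\mathcal{E}\}_{i\in I}$ is jointly surjective and each $f_{i}^{*}(M)$ is a $\mathbb{T}$-model. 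Given a sequent $\phi\vdash\psi$ of $\mathbb{T}$, consider the inclusion $\phi^{M}\cap\psi^{M}\hookrightarrow\phi^{M}$; its pullback along each $f_{i}^{*}$ is an isomorphism by hypothesis, so joint surjectivity forces the original inclusion to be an isomorphism, proving the sequent in $M$.

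\emph{Sufficiency: the construction of $\mathbb{T}$.} Let $\mathcal{S}et[\Sigma]$ denote the classifying topos for $\Sigma$-structures, with universal $\Sigma$-structure $U_{\Sigma}$, so that $\Sigma$-structures in any Grothendieck topos $\mathcal{E}$ correspond (up to isomorphism) to geometric morphisms $f:\mathcal{E}\rightarrow\mathcal{S}et[\Sigma]$ via $M\cong f^{*}(U_{\Sigma})$. By the standard theory of classifying topoi, subtopoi $j:\mathcal{T}\hookrightarrow\mathcal{S}et[\Sigma]$ correspond bijectively to quotient geometric theories $\mathbb{T}$ of the empty theory over $\Sigma$, with $\mathcal{T}\simeq\mathcal{S}et[\mathbb{T}]$, and a $\Sigma$-structure $M$ (classified by $f$) is a $\mathbb{T}$-model iff $f$ factors through $j$. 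The Grothendieck topos $\mathcal{S}et[\Sigma]$ has only a \emph{set} of subtopoi (equivalently, a set of Lawvere--Tierney topologies), which is essential below.

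Let $\mathcal{C}$ be the set of subtopoi $j:\mathcal{T}\hookrightarrow\mathcal{S}et[\Sigma]$ for which $j^{*}(U_{\Sigma})\in\mathcal{S}$, and let $j_{0}:\mathcal{T}_{0}\hookrightarrow\mathcal{S}et[\Sigma]$ be the join of $\mathcal{C}$ in the poset of subtopoi; concretely, $\mathcal{T}_{0}$ corresponds to the meet of the associated Lawvere--Tierney topologies. I would then show that the family of inclusions $\{\mathcal{T}\hookrightarrow\mathcal{T}_{0}\}_{\mathcal{T}\in\mathcal{C}}$ is jointly surjective, which is exactly the translation of the fact that the topology of $\mathcal{T}_{0}$ is the meet of those of the $\mathcal{T}\in\mathcal{C}$. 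Because each composite inclusion $\mathcal{T}\hookrightarrow\mathcal{T}_{0}\hookrightarrow\mathcal{S}et[\Sigma]$ sends $U_{\Sigma}$ to an object of $\mathcal{S}$, condition (ii) applied to this set-indexed jointly surjective family yields $j_{0}^{*}(U_{\Sigma})\in\mathcal{S}$. Let $\mathbb{T}$ be the geometric theory associated to $\mathcal{T}_{0}$.

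\emph{Matching the classes.} Every $\mathbb{T}$-model $M$ in $\mathcal{E}$ is classified by some $f:\mathcal{E}\rightarrow\mathcal{T}_{0}$, and $M\cong f^{*}(j_{0}^{*}(U_{\Sigma}))$, so (i) gives $M\in\mathcal{S}$. Conversely, given $M\in\mathcal{S}$ in $\mathcal{E}$, classified by $g:\mathcal{E}\rightarrow\mathcal{S}et[\Sigma]$, factor $g$ as a surjection followed by an inclusion, $\mathcal{E}\twoheadrightarrow\mathcal{I}\hookrightarrow\mathcal{S}et[\Sigma]$. The restriction of $U_{\Sigma}$ to $\mathcal{I}$ pulls back along the surjection to $M$, and applying (ii) to the (trivially jointly surjective) single-element family consisting of this surjection shows that $\mathcal{I}$ belongs to $\mathcal{C}$. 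Hence $\mathcal{I}\subseteq\mathcal{T}_{0}$, $g$ factors through $\mathcal{T}_{0}$, and $M$ is a $\mathbb{T}$-model.

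\emph{Main obstacle.} The two computational steps, namely the verification that the topology of the join $\mathcal{T}_{0}$ is indeed the meet of topologies attached to members of $\mathcal{C}$ (so that the family of inclusions is jointly surjective in the precise sense of (ii)) and the existence of surjection--inclusion factorizations for geometric morphisms between Grothendieck topoi, are where the proof rests; once these are in place, (i) and (ii) slot in as described. The smallness of the set of subtopoi of $\mathcal{S}et[\Sigma]$, without which (ii) could not even be applied, is the one point where the restriction to Grothendieck (as opposed to arbitrary elementary) topoi is essential.
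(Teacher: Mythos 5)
The paper does not actually prove this theorem; it is quoted from Caramello's paper and used as a black box, so there is no internal proof to compare against. Your argument is, in substance, the standard (indeed Caramello's own) proof via the classifying topos $\mathcal{S}et[\Sigma]$ of $\Sigma$-structures and the duality between its subtopoi and geometric quotient theories over $\Sigma$, and it is correct. The two steps you flag as the load-bearing ones are both standard: (a) the lattice of subtopoi of a Grothendieck topos is small and has arbitrary joins, and the canonical morphism $\coprod_{\mathcal{T}\in\mathcal{C}}\mathcal{T}\rightarrow\bigvee\mathcal{C}$ is a surjection (the join is precisely the image of this morphism under the surjection--inclusion factorization), whose inverse image is comonadic and hence reflects isomorphisms --- which is exactly the joint-surjectivity condition of (ii); and (b) the surjection--inclusion factorization itself. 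Two small points worth making explicit if you write this up: in the converse matching step you invoke (ii) for a one-element family consisting of a surjection, so you should note that a geometric surjection's inverse image reflects isomorphisms (again by comonadicity --- faithfulness alone would not suffice); and the degenerate case $\mathcal{C}=\emptyset$ is handled because the empty family is jointly surjective over the degenerate topos, where every morphism is an isomorphism. The closure of $\mathcal{S}$ under isomorphisms is used tacitly throughout, since the correspondence between structures and classifying morphisms only holds up to isomorphism; that hypothesis is in the statement, so you are fine.
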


\bigskip

If $X$ is a sober topological space, it is easy to show that the family $\{x:Sets\rightarrow Sh(X)|x\in X\}$ of points of $X$ form a jointly surjective family.  The inverse image functors $x^{*}$ for $x\in X$ are exactly the ``take the stalk at $x$'' functors, hence, by theorem V.10, a sheaf on $X$ is a model of a geometric theory $\mathbb{T}$ in $Sh(X)$ if and only if its stalks are models of $\mathbb{T}$ in $Sets$.\bigskip

By Remark 3.2 of \cite{Caram11}, the second condition of Theorem V.10 can be reformulated as follows:\bigskip

For any surjective geometric morphism $f:\mathcal{S}\rightarrow\mathcal{E}$ and any $M\in \Sigma-\textbf{str}(\mathcal{E})$, if $f^{*}(M)$ is in $\mathcal{S}$ then $M$ is in $\mathcal{S}$, and for any (set indexed) family $\{M_{i}|i\in I\}$ of $\Sigma$-structures in toposes $\mathcal{E}_{i}$ all of which are in $\mathcal{S}$, the structure in the coproduct $\coprod_{i\in I}\mathcal{E}_{i}$ whose $i^{th}$ coordinate is $M_{i}$ is also in $\mathcal{S}$.\bigskip

We assume that GDF is a geometric theory and give an example of a pair of sober spaces $X$ and $Y$, a continuous injection $f:X\rightarrow Y$ (whose induced geometric morphism $f:Sh(X)\rightarrow Sh(Y)$ is a surjection), and a sheaf $P$ on $Y$ such that $f^{*}(P)$ is GDF but $P$ is not.  This contradicts the reformulation of condition ii.) of Theorem 2.1.  The construction proceeds as follows:\bigskip

\begin{theorem}
Let $X$ be the topological space whose underlying set is the ordinal $\omega$ and whose topology is discrete, let $Y$ be the topological space whose underlying set is the ordinal $\omega + 1$ and whose topology is given by $\{[n,\omega]|n<\omega\}\cup\{\emptyset\}$, let $f:X\rightarrow Y$ be the inclusion of $\omega$ in $\omega + 1$ (so $f$ is continuous, since $X$ is discrete) and let $P$ be the presheaf on $Y$ defined as follows:\bigskip

$P([n,\omega])=n+1$\bigskip

$P(\emptyset)=1$\bigskip

$P([m,\omega]\subseteq [n,\omega])$ is the inclusion $n+1 \subseteq m+1$\bigskip

$P(\emptyset \subseteq [n,\omega])$ is the unique function $n+1\rightarrow 1$\bigskip

\noindent Then $X$ and $Y$ are sober, $f$ induces a surjective geometric morphism $f:Sh(X)\rightarrow Sh(Y)$, $P$ is a sheaf on $Y$, and $f^{*}(P)$ is GDF, but $P$ is not.

\end{theorem}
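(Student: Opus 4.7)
The plan is to verify the five assertions in order, with the real work reserved for the GDF claim on $f^{*}P$. The topological preliminaries are routine. $X$ is discrete, hence sober; the proper nonempty closed subsets of $Y$ form a chain $\{0\}\subset\{0,1\}\subset\{0,1,2\}\subset\cdots$, each trivially irreducible (being totally ordered, none is a union of two proper closed subsets) with a unique generic point ($n$ for $\{0,\ldots,n\}$ and $\omega$ for $Y$ itself), so $Y$ is sober.  The map $f$ is continuous because $X$ is discrete, and $f(X)=\omega$ meets every nonempty basic open $[n,\omega]\subseteq Y$; this density gives surjectivity of the induced geometric morphism.  The opens of $Y$ are totally ordered by inclusion, so any cover of $[n,\omega]$ contains $[n,\omega]$ itself and the sheaf axioms for $P$ are trivial.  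A directed-colimit computation yields stalks $P_n=n+1$ for $n<\omega$ and $P_\omega=\mathbb{N}$, and because $X$ is discrete $f^{*}P$ is just the pointwise family of these stalks, namely $(n+1)_{n\in\omega}$.

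For the failure of GDF at $P$, consider the point $\omega:\mathrm{Sets}\to Sh(Y)$, whose inverse image is the stalk-at-$\omega$ functor; hence $\omega^{*}(P)=\mathbb{N}$.  In $\mathrm{Sets}$, the set $\mathbb{N}$ admits an obvious model of $\mathcal{T}(\mathbb{N})$: take $\alpha$ to be the graph of the successor function and $X=\{0\}$ to be the missed point.  Since $\mathrm{Sets}$ is nondegenerate, this witnesses that $P$ is not GDF.

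The main content is the converse: $f^{*}P$ is GDF in $Sh(X)\simeq\mathrm{Sets}^{\omega}$.  I would use the characterization of GDF via Boolean toposes proved in the previous section and take an arbitrary geometric morphism $g:\mathcal{B}\to\mathrm{Sets}^{\omega}$ with $\mathcal{B}$ Boolean.  By Diaconescu's theorem applied to the internally discrete category $\omega$, such a $g$ corresponds to an $\omega$-indexed partition $(U_n)_{n\in\omega}$ of $1_{\mathcal{B}}$ by pairwise disjoint subterminal objects; under this identification $g^{*}(\mathbb{N}_{Sh(X)})=\mathbb{N}_{\mathcal{B}}$ and $g^{*}(f^{*}P)=\coprod_{n\in\omega}(n+1)\cdot U_n$.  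Given a monomorphism $m:\mathbb{N}_{\mathcal{B}}\hookrightarrow\coprod_n(n+1)\cdot U_n$, restriction along each slice $\mathcal{B}/U_n\to\mathcal{B}$ produces a monomorphism $\mathbb{N}_{\mathcal{B}/U_n}\hookrightarrow(n+1)$ in the Boolean topos $\mathcal{B}/U_n$, where $n+1$ is K-finite and decidable.  By the standard argument already invoked in the proof that K-finite implies GDF, a monomorphism from the NNO into a K-finite decidable object in a Boolean topos forces the successor to be an isomorphism and so collapses the topos.  Hence every $U_n=0$, so $1_{\mathcal{B}}=\coprod_n U_n=0$ and $\mathcal{B}$ is degenerate.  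The principal obstacle is setting up the partition description of $g$ and carefully executing the slicewise restriction so the K-finite codomains on each slice are genuinely $(n+1)$; once this reduction is in place, the kernel of the argument is the Boolean/NNO/K-finite lemma used repeatedly in this chapter.
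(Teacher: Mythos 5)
Your proof is correct, but for the central claim --- that $f^{*}(P)$ is GDF --- it takes a genuinely different route from the paper, and in fact a stronger one. The paper never establishes that claim outright: the whole theorem sits inside a reductio, and the text deduces ``$f^{*}(P)$ is GDF'' from the standing assumption that GDF is a geometric theory, by observing that the points of the discrete space $X$ form a jointly surjective family and that every stalk $x+1$ is finite, hence GDF (condition (ii) of Caramello's theorem). You instead prove the claim unconditionally: identify $Sh(X)$ with $Sets/\omega$, use Diaconescu's theorem to present an arbitrary geometric morphism $g:\mathcal{B}\rightarrow Sh(X)$ with $\mathcal{B}$ Boolean as a partition $1=\bigcup_{n}U_{n}$ into pairwise disjoint subterminals, compute $g^{*}(f^{*}P)=\coprod_{n}(n+1)\cdot U_{n}$, and kill each slice $\mathcal{B}/U_{n}$ by the Boolean/K-finite/NNO argument already used to show K-finite objects are GDF. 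This buys a theorem whose conclusion is literally true rather than true only under the hypothesis being refuted; the cost is the bookkeeping you flag, all of which goes through: $Sets/\omega$ classifies externally $\omega$-indexed partitions of $1$, so the external family $(U_{n})_{n\in\omega}$ really does cover $1$, and disjointness of the $U_{n}$ makes the restriction of the codomain to $\mathcal{B}/U_{n}$ exactly the finite cardinal $n+1$, so each $U_{n}\cong 0$ and hence $1\cong 0$. Your treatment of ``$P$ is not GDF'' (the stalk at $\omega$ is $\mathbb{N}$, which carries the successor-plus-zero model of $\mathcal{T}(\mathbb{N})$ in the nondegenerate topos $Sets$) is the paper's argument with the witness made explicit. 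One small repair to the preliminaries: density of $f(X)$ in $Y$ is not by itself sufficient for surjectivity of $Sh(f)$ --- the open point of the Sierpi\'{n}ski space is dense, yet the induced geometric morphism is not surjective. What is needed, and what holds here, is injectivity of the frame map $V\mapsto V\cap\omega$, which follows because the traces $[n,\omega)$ are pairwise distinct; this is the criterion the paper invokes.
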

\begin{proof}
$X$ is clearly sober.  $Y$ is sober since every non-empty closed subset is irreducible, and those not of the form $Y$ are of the form $[0,n]$ for some $n\in \omega$, which is uniquely the closure of $\{n\}$.  $Y$ itself is the closure of $\{\omega\}$.\bigskip

$f^{-1}: \mathcal{O}(Y)\rightarrow\mathcal{O}(X)$ is clearly an injective frame homomorphism, hence $f$ is a surjective morphism of locales.  Therefore $f:Sh(X)\rightarrow Sh(Y)$ is a surjective geometric morphism, by the theory of geometric morphisms between categories of sheaves on a locale.  See \cite[C1.5]{Johns02} for proofs of these facts.\bigskip

To see that $P$ is a sheaf, we note that the only covering sieve on an open set $[n,\omega]\subseteq Y$ is the principal one.  Given a matching family $\{s_{m}|m\geq n\}\cup\{s_{\emptyset}\}$ for $P$ on the principal cover of $[n,\omega]$ we must have $P([m,\omega]\subseteq [n,\omega])(s_{n})=s_{m}$ for all $m\geq n$ and $s_{\emptyset}=*$, so the family is uniquely determined by $s_{n}$, hence $P$ is a sheaf since $n$ was arbitrary.\bigskip

Pulling $P$ back to $\mathcal{O}(X)$ (as a presheaf), which we denote $\tilde{f}^{*}(P)$, we have that $\tilde{f}^{*}(P)(U)=\underrightarrow{Colim}_{U\subseteq f^{-1}(V)}P(V)$, which is just $\inf(U)+1$ if $U$ is non-empty and $\omega$ if $U=\emptyset$.\bigskip

To sheafify $\tilde{f}^{*}(P)$, we note that the stalk at $x\in X$ of $\tilde{f}^{*}(P)$ is $\underrightarrow{Colim}_{x\in U}\tilde{f}^{*}(P)(U)$, which is just $x+1$.  Then the \'{e}tale space of $\tilde{f}^{*}(P)$ is $\coprod_{x\in X}(x+1)$ with the usual topology, which is irrelevant since $X$ is discrete and every section of the projection is continuous.  Thus the set of sections of the projection over an open set $U\subseteq X$ is $\prod_{x\in U}(x+1)$, that is, the sheafification of $\tilde{f}^{*}(P)$ at $U$ is given by $f^{*}(P)(U)=\prod_{x\in U}(x+1)$.  It is easy to verify that $f^{*}(P)(V\subseteq U)$ is just the product projection $\prod_{x\in U}(x+1)\rightarrow\prod_{x\in V}(x+1)$.\bigskip

Now $\{x\}$ is the limit (=intersection) of all the open sets $U$ with $x\in U$, hence the stalk of $f^{*}(P)$ at $x$ is $\underrightarrow{Colim}_{x\in U}f^{*}(P)(U)=f^{*}(P)(\underrightarrow{Lim}_{x\in U}U)=f^{*}(P)(\{x\})=x+1$.  Under the assumption that GDF is a geometric theory, since each stalk of $f^{*}(P)$ is a finite set, hence GDF, we must have that $f^{*}(P)$ is GDF.\bigskip

But $P$ is not GDF since the stalk of $P$ at $\omega$ in $Y$ is $\omega$, hence not GDF, contradicting the reformulation of condition ii.) in theorem 2.1.  Thus GDF is not a geometric theory.

\end{proof}

\section{Examples}
\label{Examples}

In this section we present some examples of K-finite objects and some counterexamples to certain properties.

\begin{example}
 
Let $\mathcal{S}$ be a model of ZF set theory with an infinite Dedekind finite set $A$.  See \cite{BlaSc89}.  If we force with finite partial monomorphisms (similarly to IV.5 above) we obtain a model of ZF in which the pullback of $A$ is Dedekind infinite.  (Forcing extensions admit a canonical geometric morphism to the base category of sets.)

\end{example}

\begin{example}
 
The powerset of a GDF object need not be GDF.  Let $Sets^{\mathbb{N}}$ be the topos of covariant functors on the natural numbers with their usual order.  The powerset of the terminal object can be pictured as in the diagram preceding A1.6.10 in \cite{Johns02}.  The pullback of $P(1)$ along the inclusion of any point $n:Sets\rightarrow Sets^{\mathbb{N}}$ is infinite, hence $P(1)$ is not GDF.  Thus the powerset of a GDF object need not be GDF.

\end{example}

\begin{example}
 
Let $P$ be the sheaf of germs of functions $g:\mathbb{R}\rightarrow\mathbb{N}$ in $Sh(\mathbb{R})$ such that $g(r)=0$ for all but (possibly) finitely many rational numbers and such that $g(p/q)\leq q$.  $P$ is evidently none of K, \~{K}, or R-finite (its stalks have arbitrarily large finite cardinality), but it is GDF.  See \cite{Freyd06}.

\end{example}

\chapter{Field Objects}
\label{chap6}
\section{The Axioms F1 and F2}
\label{The Axioms F1 and F2}

Let $\mathcal{E}$ be a topos with a natural numbers object.  Throughout this section, $A$ will be a commutative, unital, non-trivial ring object in $\mathcal{E}$.  We will be interested in different versions of what it means for $A$ to be a field.  There are several possible axioms for fields which are intuitionistically inequivalent.  We shall be interested in the the following two:\bigskip

F1:  $\forall a(\top\rightarrow (a=0)\vee \exists b(ab=1))$\bigskip

F2:  $\forall a(a\neq 0\rightarrow \exists b(ab=1))$\bigskip

The axiom F1 is evidently geometric.  We will show F2 is not.  These axioms have been studied in \cite{Johns77(2)}.  The reason for considering the second axiom is that F1 implies the ring object $A$ is, in fact, decidable.  Following \cite{Johns77(2)} we give the name geometric field to those ring objects satisfying F1, and field to those satisfying F2.  We shall show that F2 plus the axioms of commutative, unital, non-trivial ring objects is not equivalent to a geometric theory, but it is equvalent to the degeneracy of the classifying topos of a certain geometric theory.\bigskip

In order to do this we will define an auxilliary theory NTI, and consider the theory T which says ``A is a commutative, unital, nontrivial ring and the classifying topos of NTI is degenerate'' which is evidently preserved by inverse images, and show that a ring object satisfies F2 if and only if it satisfies T.  We also note that the theory T+$\forall a(a=0\vee a\neq0)$ (that is, $A$ is decidable) will then be equivalent to F1, since Johnstone shows that F2+$\forall a(a=0\vee a\neq0)$ is equivalent to F1.\bigskip

\begin{definition}
NTI is the internal propositional theory whose sentence symbols are the members of $A$ and whose axioms are:\bigskip

1.)  $a\wedge b\rightarrow a+b$ \hfill $\forall a,b\in A$ \bigskip

2.)  $a\rightarrow ab$ \hfill $\forall a,b\in A$ \bigskip

3.)  $\top\rightarrow 0$\bigskip

4.)  $1\rightarrow \bot$\bigskip

5.)  $\top\rightarrow \bigvee_{a\neq 0}a$\bigskip

\end{definition}\bigskip

Intuitively, a model of NTI is a non-trivial ideal in $A$, that is, an ideal with a member distinct from $0$.  Like any geometric theory, NTI has a classifying topos $\mathcal{E}[$NTI$]$, which we describe explicitly.  The underlying category object of the site for $\mathcal{E}[$NTI$]$ is the internal poset $K(A)$ of K-finite subobjects of $A$, with the opposite of the inclusion order.  The axioms of NTI impose a Grothendieck topology on $K(A)$ as follows:\bigskip   

Axiom 1 tells us that for any $a,b\in A$ the set $\{a,b\}$ is covered by the sieve generated by $\{a,b,a+b\}$.  Axiom 2 tells us that for any $a,b\in A$ the set $\{a\}$ is covered by the sieve generated by $\{a,ab\}$.  Axiom 3 tells us that $\emptyset$ is covered by the sieve generated by $\{0\}$.  Axiom 4 tells us that $\{1\}$ is covered by the empty sieve.  Finally, Axiom 5 tells us that $\emptyset$ is covered by the sieve generated by the family $\{\{a\}|a\neq 0\}$.  A general cover is obtained from these basic covers by pullback and iteration.\bigskip

To obtain the pullback closure of these sieves, we make the following definition:  A subset $R\subseteq K(A)$ \textit{directly covers} an object $p\in K(A)$ if either:\bigskip

1.)  $\exists a,b\in p$ such that $p\cup \{a+b\}\in R$\bigskip

2.)  $\exists a\in p, \exists b\in A$ such that $p\cup \{ab\}\in R$\bigskip

3.)  $p\cup \{0\}\in R$\bigskip

4.)  $1\in p$\bigskip

5.)  $\forall a\neq 0(p\cup\{a\}\in R)$\bigskip

The notion of direct covering, when restricted to sieves, gives the pullback closure of the basic covers.  To get the closure under iteration, given a subset $R\subseteq K(A)$, we define $\bar{R}$ to be the smallest subset of $K(A)$ that contains $R$ and contains every object it directly covers.  Then the Grothendieck topology induced by the axioms of NTI is given by $J(p)=\{R\subseteq K(A)|R$ is a sieve on $p$ and $p\in \bar{R}\}$.\bigskip

For convenience we introduce a unary predicate $\in U$ denoting the extension of the formula $\phi(a)=\exists b(ab=1)$ so we may write $a\in U$ for $\phi(a)$.  We are now prepared to prove our main theorem.\bigskip

\begin{theorem}
The classifying topos of NTI is degenerate if and only $A$ satisfies the axiom F2.
\end{theorem}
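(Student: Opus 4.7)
My plan is to reduce the theorem to a direct manipulation of the Grothendieck topology on $K(A)$ whose covers are generated by the axioms of NTI, using the characterization that the classifying topos is degenerate if and only if every $R \subseteq K(A)$ that contains everything it directly covers contains $\emptyset$; here ``directly covers'' is the five-clause condition extracted in the previous subsection from NTI's axioms. Both directions of the equivalence can then be carried out in the internal language of $\mathcal{E}$.

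For F2 $\Rightarrow$ degeneracy, I fix an $R$ that is closed under direct covering and aim to conclude $\emptyset \in R$. By the clause from axiom 5, it suffices to show $\{a\} \in R$ for every $a \neq 0$. Given such an $a$, F2 supplies some $b$ with $ab = 1$. The clause from axiom 4 forces $\{a,1\} \in R$ because $1 \in \{a,1\}$, and the clause from axiom 2, applied with $a \in \{a\}$ and multiplier $b$, uses $ab = 1$ to collapse $\{a,1\}$ down to $\{a\}$. Three rule applications finish the argument.

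For the converse, I assume degeneracy and work internally. Fixing $a \neq 0$, I define $R_a = \{p \in K(A) \mid 1 \text{ lies in the ideal of } A \text{ generated by } \{a\} \cup p\}$ and check that $R_a$ contains everything it directly covers. Clauses 1, 2, 3 are immediate because adjoining $c+d$, $cd$ (for $c \in p$), or $0$ to $p$ does not enlarge the ideal generated by $\{a\} \cup p$. Clause 4 is automatic since $1 \in p$ already implies $1$ is in that ideal. The decisive clause is the one from axiom 5: its hypothesis ``$\forall c \neq 0,\ p \cup \{c\} \in R_a$'' is discharged by specializing $c$ to $a$, which is legitimate precisely because $a \neq 0$, yielding $1$ in the ideal generated by $\{a\} \cup p \cup \{a\} = \{a\} \cup p$. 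Degeneracy then forces $\emptyset \in R_a$, meaning $1$ lies in the ideal generated by $\{a\}$, i.e., $\exists b(ab = 1)$.

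The main obstacle I anticipate is the final specialization step in the converse: setting $c = a$ in axiom 5's universal hypothesis is the single move that lets the externally-formulated degeneracy---which is a priori strictly stronger than the non-existence of any non-trivial proper ideal of $A$ in $\mathcal{E}$---translate into the non-geometric axiom F2 itself, rather than merely its double negation. The remaining verifications are routine facts about the ideal operation $S \mapsto (\text{ideal generated by } S)$ and its interaction with $K$-finite subsets, all of which are constructively valid in any topos with a natural numbers object.
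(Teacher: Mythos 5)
Your proof is correct. The direction F2 $\Rightarrow$ degeneracy is essentially identical to the paper's $(\Leftarrow)$ argument: for an arbitrary $R$ closed under direct covering, clause 4 puts $\{a,1\}$ in $R$, clause 2 with multiplier $b$ (where $ab=1$) collapses this to $\{a\}$, and clause 5 then covers $\emptyset$. The converse is where you genuinely diverge. The paper works with a single closed set $Z$ whose membership condition on an inhabited $p=\{a_0,\dots,a_n\}$ is the disjunction ``some $a_i=1$'' or ``each $a_i$ is a unit or everything $\neq a_i$ is a unit,'' and must verify that this condition passes from $p\cup\{a_i+a_j\}$ (resp.\ $p\cup\{a_ib\}$) down to $p$; that verification is actually delicate -- if, say, $a_i+a_j=1$, the enlarged set satisfies the first disjunct for a reason that transfers to $p$ only via a further argument the paper does not supply (in $\mathbb{Z}$ with $p=\{2,-1\}$ the enlarged set meets the condition while $p$ does not, so the set $Z$ as literally written is not closed under direct covering, though this does not affect the truth of the theorem). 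Your family $R_a=\{p\mid 1\in\langle\{a\}\cup p\rangle\}$, parametrized internally by $a\neq 0$, sidesteps all of this: clauses 1--3 are immediate because adjoining $c+d$, $cd$, or $0$ does not change the generated ideal, clause 4 is trivial, and clause 5 is discharged by the single specialization $c:=a$, which is exactly where the hypothesis $a\neq 0$ earns its keep. The conclusion $\emptyset\in R_a$ then reads off $\exists b(ab=1)$ because the ideal generated by $\{a\}$ is precisely $\{ab\mid b\in A\}$. This is both more conceptual (it makes visible that the closed sets witnessing non-degeneracy are exactly the obstructions ``$1$ not yet in the ideal'') and more robust than the paper's ad hoc $Z$; the only price is the mild bookkeeping of running the quantifier over closed subsets at a generalized element $a$ of $\{a\in A\mid a\neq 0\}$, which is standard in the Kripke--Joyal semantics the paper already uses.
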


\begin{proof}
 
$(\Rightarrow)$  Let $Z=\{p\in K(A)|$If $p=\emptyset$ then $\forall a(a\neq 0\rightarrow a\in U)$ and if $p=\{a_{0},...,a_{n}\}$ then $(\bigvee_{i=0}^{n}(a_{i}=1))\vee\bigwedge_{i=0}^{n}((a_{i}\in U) \vee \forall a(a\neq a_{i}\rightarrow a\in U))\}$.  We denote the condition for inhabited $p$ by $\phi$.  $Z$ is not empty since $\{1\}\in Z$.  We show that $Z$ contains everything it directly covers.  Suppose $Z$ directly covers $p$.  If $p=\emptyset$ then either:\bigskip

1.)  $\{0\}\in Z$, hence, by the definition of $Z$, since $0\notin U$ it must be the case that $\forall a(a\neq 0\rightarrow a\in U)$, so the first requirement is satisfied and $p=\emptyset\in Z$.\bigskip

2.)  $\forall b(b\neq 0\rightarrow \{b\}\in Z)$, hence, again by the definition of $Z$, since $\forall a\forall b(b\neq 0\wedge a\neq b\rightarrow a\in U)$ is false (for instance, with $a=0$), it must be the case that $b\in U$ and again the first requirement is satisfied and $p=\emptyset\in Z$.\bigskip

If $p=\{a_{0},...,a_{n}\}$ then either:\bigskip

1.)  For some $i,j \leq n$ $p\cup \{a_{i}+a_{j}\}\in Z$, hence the formula $\phi$ holds of $p\cup \{a_{i}+a_{j}\}$.  But this clearly implies that the formula $\phi$ also holds of $p$, so $p\in Z$.\bigskip

2.)  For some $i\leq n$ and some $b\in a$, $p\cup \{a_{i}b\}\in Z$, hence the formula $\phi$ holds of $p\cup \{a_{i}b\}\in Z$.  Again this implies that the formula $\phi$ also holds of $p$, so $p\in Z$.\bigskip

3.)  $p\cup \{0\}\in Z$, hence the formula $\phi$ holds of $p\cup \{0\}$.  Again this implies that the formula $\phi$ holds of $p$, so $p\in Z$.\bigskip

4.)  $1\in p$, hence, since $1\in U$ is true, the formula $\phi$ holds of $p$, so $p\in Z$.\bigskip

5.)  $\forall a(a\neq 0\rightarrow p\cup \{a\}\in Z)$, hence, since there is at least one element of $A$ not equal to $0$ (namely $1$), and the formula $\phi$ holds for all $p\cup \{a\}$, the formula $\phi$ again holds for $p$, so $p\in Z$.\bigskip

If the classifying topos of NTI is degenerate, then every set which contains everything it directly covers must contain $\emptyset$, hence $\emptyset\in Z$ and therefore $\forall a(a\neq 0\rightarrow a\in U)$, i.e. $A$ satisfies F2.\bigskip

($\Leftarrow$)  Suppose $Z$ contains everything it directly covers.  If $1\in p$ then $p\in Z$, hence if $a\neq 0$ then, since $\exists b(ab=1)$, $\{a\}\cup \{ab\}=\{a\}\cup \{1\}\in Z$.  But then $\{a\}\in Z$, so $Z$ contains every non-zero singleton.  But then $Z$ directly covers $\emptyset$, so $\emptyset\in Z$.  Since $Z$ was arbitrary, $\emptyset\in \bar{\emptyset}$, hence the classifying topos of NTI is degenerate.

\end{proof}

\section{Examples}

\begin{example}
 
Consider the Sierpinski topos $Sets^{2}$ whose objects are triples $(A,B,g)$ where $A$ and $B$ are sets and $g:A\rightarrow B$ is a function.  If an object $(A,B,g)$ satisfies F2, then $B$ must be a field in $Sets$, that is, satisfy F1, $A$ must be a ring object, and $g$ must be a homomorphism.  If $(a,b)\neq (0,0)$ then $a\neq 0$ and $b\neq 0$, hence if $(A,B,g)$ satisfies F2, $a$ and $b$ are invertible.  It follows that $A$ must be a local ring with maximal ideal the kernel of $g$, and $B$ is an extension of the residue field of $A$.  For instance, given any field $k$, $k[x]/(x^{2})\rightarrow k$, where $x\mapsto 0$, satisfies F2.  But the pullback along the closed point of $Sets^{2}$ is $k[x]/(x^{2})$ which is evidently not a field.  Thus F2, and therefore NTI are not preserved by geometric morphisms.  See \cite{Johns77(2)}.  

\end{example}

How can it be that NTI is not preserved by geometric morphisms?  The answer lies in the fifth axiom.  It is true that the classifying topos of NTI($A$) is degenerate, then so is the classifying topos of $f^{*}($NTI$(A))$.  But since $f^{*}(\{a\in A|\neg(a=0)\})$ is, in general, different from $\{a\in f^{*}(A)|\neg(a=0)\}$, the canonical translation of the fifth axiom need not be the fifth axiom applied to $f^{*}(A)$, and therefore the canonical translation of NTI($A$) is, in general, different from NTI($f^{*}(A)$). Thus we see that properties equivalent to the degeneracy of the classifying topos for a geometric theory are only preserved by geometric morphisms when the joins considered in the axioms are preserved by inverse images.

%\startappendices
%\label{appendix}
%\input{appendix}

\bibliography{biblio}   % Use the BibTeX file ``biblio.bib''.

\end{document}